\newtheorem{theorem}{Theorem}
\theoremstyle{plain}
\newtheorem{assumption}{Assumption}
\newtheorem{corollary}[theorem]{Corollary}
\newtheorem{definition}[theorem]{Definition}
\newtheorem{lemma}[theorem]{Lemma}
\newtheorem{proposition}[theorem]{Proposition}
\numberwithin{equation}{section}
\numberwithin{theorem}{section}
\newcommand{\R}{\ensuremath{\mathbb{R}}}
\newcommand{\E}{\ensuremath{\mathbb{E}}}
\newcommand{\N}{\ensuremath{\mathbb{N}}}
\newcommand{\V}{\ensuremath{\mathcal{V}}}
\newcommand{\U}{\mathcal{U}}
\newcommand{\Hi}{\mathcal{H}}
\newcommand{\Le}{\mathcal{L}}
\def\e{{\mathrm{e}}}
\title{Representation and approximation of ambit fields in Hilbert space}
\author[Benth]{Fred Espen Benth}
\address[Fred Espen Benth]{\\
Department of Mathematics \\
University of Oslo\\
P.O. Box 1053, Blindern\\
N--0316 Oslo, Norway \\
and \\
Centre for Advanced Study \\
Drammensveien 78 \\
N-0271 Oslo, Norway}
\email[]{fredb\@@math.uio.no}
\urladdr{http://folk.uio.no/fredb/}
\author[Eyjolfsson]{Heidar Eyjolfsson}
\address[Heidar Eyjolfsson]{\\
Department of Mathematics \\
University of Bergen \\
P.O. Box 7803 \\
N--5020 Bergen, Norway.}
\email[]{Heidar.Eyjolfsson\@@uib.no}
\urladdr{http://www.uib.no/en/persons/Heidar.Eyjolfsson}
\date{\today}
\thanks{F. E. Benth acknowledges financial support from the research projects "Managing Weather Risk in Energy Markets (MAWREM)" and "Finance, Insurance, Energy, Weather and Stochastics (FINEWSTOCH)",
both funded by the Norwegian Research Council. H. Eyjolfsson acknowledges financial support from Finansmarkedsfondet.}
\begin{document}
\maketitle
\begin{abstract}
We lift ambit fields as introduced by Barndorff-Nielsen and 
Schmiegel~\cite{BN-Schm} to a class of Hilbert space-valued volatility modulated Volterra processes. 
We name this class Hambit fields, and show that they can be expressed as a 
countable sum of weighted real-valued volatility modulated Volterra processes. Moreover, Hambit fields
can be interpreted as the boundary of the mild solution of a certain first order stochastic partial 
differential equation. This stochastic partial differential equation is formulated on a suitable 
Hilbert space of functions on the positive real line with values in the state space of the Hambit field. 
We provide an explicit construction of such a space. Finally, we apply this interpretation of 
Hambit fields to develop a finite difference scheme, for which we prove convergence under
some Lipschitz conditions.
\end{abstract} 

\section{Introduction}

Ambit fields, introduced by Barndorff-Nielsen and Schmiegel~\cite{BN-Schm}, have attracted
much attention in recent years being a powerful tool to model stochastic phenomena like 
turbulence, tumor growth, weather dynamics,  and financial prices
(see Barndorff-Nielsen and Schmiegel~\cite{BN-Schm}, Barndorff-Nielsen, Benth and Veraart~\cite{BNBV-spot,BNBV-forward}, Benth and \v{S}altyt\.e Benyth~\cite{BSB-book}, Corcuera et al.~\cite{CFSV}
and Vedel Jensen et al.~\cite{VJ}). The class of ambit fields 
is analytically tractable, and provides a framework for a probabilistic description of the 
dynamics of noisy systems which are more general than the conventional stochastic partial differential
equations (see Barndorff-Nielsen, Benth and Veraart~\cite{BNBV-first}).  

Following Barndorff-Nielsen and Schmiegel~\cite{BN-Schm}, an ambit field
is defined as a real-valued random field on $\R_+\times\R^d$ and a filtered probability space 
$(\Omega,\mathcal{F},\{\mathcal{F}_t\}_{t\geq 0}, P)$ of the form
\begin{equation}
\label{def-ambit-classic}
Z(t,x)=\int_0^t\int_{A}g(t,s,x,y)\sigma(s,y)\, L(dy,ds)\,.
\end{equation}
Here, $(t,x)\in\R_+\times A$, $A\subset\R^d$ is a Borel measurable subset called
the {\it ambit set}, $g$ a measurable real-valued function on $\R_+\times\R_+\times\R^d\times\R^d$ and
$\sigma$ a real-valued predictable random field on $\R_+\times\R^d$. The function $g$ is sometimes referred
to as the {\it kernel function}, and $\sigma$ is modelling the volatility or intermittency. Finally,
$L$ is a L\'evy basis, where $\sigma$ and $L$ are assumed independent. In this paper we restrict
our attention to $L$ being a square-integrable L\'evy basis. Moreover, we suppose 
$L$ to have mean zero. Using the integration
concept of Walsh (see Walsh~\cite{Walsh}), the ambit field $Z(t,x)$ in \eqref{def-ambit-classic} is
well-defined if
\begin{equation}
\label{int-cond-ambit-classical}
\int_{[0,t]\times A}g^2(t,s,x,y)\E[\sigma^2(s,y)]\text{Var}(L'(y,s))\,c(dy,ds)<\infty\,, 
\end{equation}
where $c$ is the control measure and $L'$ the L\'evy seed associated with $L$. Indeed,
$\text{Var}(L'(x,t))\,c(dx,dt)$ is equal to the Radon-Nikodym derivative of the covariance measure
of $L$.   
We refer to Barndorff-Nielsen and Schmiegel~\cite{BN-Schm} or,
the more recent survey paper of Barndorff-Nielsen, Benth and Veraart~\cite{BNBV-banach} for details and discussions about ambit fields and their properties
and applications. An analysis on stochastic integration for random fields as introduced by Walsh
applied to ambit fields can be found in Barndorff-Nielsen, Benth and Veraart~\cite{BNBV-first}. Note that we consider the ambit field $Z$ without drift and restrict 
our attention to times $t$ which are positive. Moreover, in the general definition of ambit fields 
by Barndorff-Nielsen and Schmiegel~\cite{BN-Schm}, the ambit set $A$ is also allowed to be dependent on
time and space $(t,x)$. We refrain from such generality here, as in most cases such dependency can be
included in the specification of the kernel function $g$. 

The objective of this paper is to define a class of Volterra processes with values in Hilbert space which 
provides an infinite-dimensional formulation of ambit fields. We shall call these processes
{\it Hambit fields}, referring to the Hilbert space-valued structure. After defining Hambit fields, we
discuss some specific examples and relate the Hambit fields to the "classical" ambit fields $Z(t,x)$ as
in \eqref{def-ambit-classic}. Under mild conditions, we can compute a rather explicit expression for the 
characteristic functional of a Hambit field. If $L$ is a Wiener basis, then the Hambit field becomes
a conditional Gaussian Hilbert-valued random variable. 

One of our main results is the representation of Hambit fields as a weighted series of volatility 
modulated Volterra processes. Volatility modulated Volterra processes generalize L\'evy 
semistationary  processes, for which Ornstein-Uhlenbeck processes constitute a particular case. 
L\'evy semistationary processes have been applied to model energy spot prices (see Barndorff-Nielsen, Benth and Veraart~\cite{BNBV-spot}), while in Barndorff-Nielsen, Benth and 
Veraart~\cite{BNBV-forward} ambit fields have been proposed as a model for 
energy forward markets. Thus, the representation of Hambit fields in terms
of a weighted series of volatility modulated Volterra processes provides us with a
useful theoretical link between spot and forward market models based on ambit fields. This result shows
the power of lifting ambit fields to Hilbert space, which gives a simple approach to show 
such a representation using basis function expansions. For an extensive discussion of energy
spot and forward markets and multi-factor commodity pricing models, we refer to  
Benth, \v{S}altyt\.{e} Benth and Koekebakker\cite{BSBK-book}.   

Hambit fields can be seen as a Volterra process in Hilbert space. By a simple splitting of
time in the integration and in the kernel function they can be viewed as mild solutions of a 
first order stochastic partial differential equation formulated in a Hilbert space of functions from
$\R_+$ into the state space of the Hambit field.  We construct an explicit space of such functions
on $\R_+$, generalizing the Filipovic space of real-valued absolutely continuous functions on
$\R_+$ (see Filipovic~\cite{F}). Via an evaluation map, we can transform the solution of the
stochastic partial differential equation linearly into a Hambit field. This result follows from a
commutativity property of the stochastic integral with linear maps. 

Using the interpretation of Hambit fields as the boundary solution of a stochastic partial
differential equations, we develop an iterative finite difference scheme. The scheme is 
formulated in the state space of the Hambit field, and under certain Lipschitz conditions
on the kernel function the convergence rate of the scheme is controlled. Our results provide a framework for
numerical studies of ambit fields, taking a different route than the Fourier-based method 
suggested by Eyjolfsson~\cite{E}. 

Our results are presented as follows. In the next section we define Hambit fields and study some elementary aspects and develop a series representation in terms of volatility modulated
Volterra processes. We proceed in Section~3 by introducing a stochastic partial
differential equation for which we can relate Hambit fields as a boundary solution. Finally,  
Section~4 is devoted to the development and analysis of a finite difference scheme for this
stochastic partial differential equation.

\section{Definition and analysis of Hambit fields}
In this Section we introduce a class Hilbert-space valued Volterra processes that provides a general definition of ambit fields as defined in \eqref{def-ambit-classic}.  

In the sequel, we shall operate with the three separable Hilbert spaces $\U, \V$ and $\Hi$, where we denote the respective inner products
by  $(\cdot,\cdot)_i$ and corresponding norms $|\cdot|_i, i=\U,\V,\Hi$. 
Let $t\mapsto\sigma(t)$ be a $\U$-valued predictable stochastic process. Introduce the measurable function 
$\Gamma:\R_+^2\rightarrow\mathcal{L}(\U,\mathcal{L}(\V,\Hi))$, where 
$\mathcal{L}(\V,\Hi)$ is the space
of bounded operators from $\V$ to $\Hi$, and $\mathcal{L}(\U,\mathcal{L}(\V,\Hi))$ the space of 
bounded operators from $\U$ to 
$\mathcal{L}(\V,\Hi)$. Note that since $\Hi$ is a Hilbert space, $\mathcal{L}(\V,\Hi)$ becomes a Banach space, 
which again implies that 
$\mathcal{L}(\U,\mathcal{L}(\V,\Hi))$ is a Banach space under respective operator norms.  By the predictability of the process $\sigma$, we find that  
$s\in[0,t]\mapsto\Gamma(s,t)(\sigma(s))\in\mathcal{L}(\V,\Hi)$ is predictable.
Finally, assume that $L$ is a square-integrable $\V$-valued L\'evy process with zero mean 
(i.e., $L$ is a martingale). Denote by $\mathcal{Q}\in\mathcal{L}(\V)$ the covariance operator of $L$, being a symmetric, non-negative definite trace
class operator. Note that we use the notation $\mathcal{L}(\V)$ for $\mathcal{L}(\V,\V)$, and that we do not assume 
independence between  $\sigma$ and $L$.

We define a {\it Hambit field} as 
follows:
\begin{definition}
Suppose, for each $t\leq T$,
\begin{equation}
\label{int-cond}
\E[\int_0^t\|\Gamma(t,s)(\sigma(s))\mathcal{Q}^{1/2}\|_{\text{HS}}^2\,ds]<\infty\,,
\end{equation}
where $\|\cdot\|_{\text{HS}}$ denotes the Hilbert-Schmidt norm on $\mathcal{L}(\V,\Hi)$.
Then the $\Hi$-valued stochastic process $\{X(t)\}_{t\in[0,T]}$ defined as
$$
X(t)=\int_0^t\Gamma(t,s)(\sigma(s))\,dL(s)\,,
$$ 
is called a {\it Hambit field}.
\end{definition}
We remark that by Peszat and Zabczyk~\cite[Sect.~8.6]{PZ}, the conditions on $\Gamma$ and $\sigma$ make the stochastic integral with respect to $L$ well-defined, in fact the following isometry holds
\begin{equation}
\label{isometry}
\E\left[\left| X(t) \right|_{\mathcal{H}}^2 \right] = \E\left[\int_0^t\|\Gamma(t,s)(\sigma(s))\mathcal{Q}^{1/2}\|_{\text{HS}}^2\,ds \right]\,.
\end{equation}
A convenient sufficient condition for \eqref{int-cond}
is formulated in the next Lemma:
\begin{lemma}
\label{lemma:suff-int-cond}
Suppose for each $t\leq T$ that 
$$
\int_0^t\|\Gamma(t,s)\|_{\text{op}}^2\E[|\sigma(s)|_{\U}^2]\,ds<\infty\,,
$$
then Condition~\eqref{int-cond} holds. Here, $\|\cdot\|_{\text{op}}$ denotes the operator norm in $\mathcal{L}(\U,\mathcal{L}(\V,\Hi))$.
\end{lemma}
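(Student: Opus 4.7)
The plan is to bound the integrand in \eqref{int-cond} pointwise (in $s$ and $\omega$) by the product of three manageable quantities, then integrate. Specifically, I will exploit three standard facts: the submultiplicativity of the Hilbert--Schmidt norm against the operator norm, namely $\|AB\|_{\mathrm{HS}} \leq \|A\|_{\mathrm{op}}\|B\|_{\mathrm{HS}}$ whenever the composition is defined; the fact that $\Gamma(t,s) \in \mathcal{L}(\U,\mathcal{L}(\V,\Hi))$, which gives $\|\Gamma(t,s)(\sigma(s))\|_{\mathrm{op}} \leq \|\Gamma(t,s)\|_{\mathrm{op}}\,|\sigma(s)|_{\U}$; and the trace-class property of $\mathcal{Q}$, which ensures that $\mathcal{Q}^{1/2}$ is a Hilbert--Schmidt operator on $\V$ with $\|\mathcal{Q}^{1/2}\|_{\mathrm{HS}}^2 = \tr(\mathcal{Q}) < \infty$.

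Chaining these three inequalities yields the pointwise estimate
$$
\|\Gamma(t,s)(\sigma(s))\mathcal{Q}^{1/2}\|_{\mathrm{HS}}^2 \leq \tr(\mathcal{Q})\,\|\Gamma(t,s)\|_{\mathrm{op}}^2\,|\sigma(s)|_{\U}^2.
$$
Taking expectation and integrating in $s \in [0,t]$, then swapping integral and expectation by Tonelli's theorem (all integrands are non-negative, so no integrability hypothesis is required to justify the exchange), gives
$$
\E\Bigl[\int_0^t \|\Gamma(t,s)(\sigma(s))\mathcal{Q}^{1/2}\|_{\mathrm{HS}}^2\, ds\Bigr] \leq \tr(\mathcal{Q}) \int_0^t \|\Gamma(t,s)\|_{\mathrm{op}}^2\,\E[|\sigma(s)|_{\U}^2]\, ds,
$$
and the right-hand side is finite by hypothesis, which is exactly Condition~\eqref{int-cond}.

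There is no genuine obstacle here; the statement is essentially a bookkeeping exercise once the correct ideal property of the Hilbert--Schmidt class is invoked. The only subtlety worth flagging in the write-up is the two-step passage through the operator norm: first one factors $\mathcal{Q}^{1/2}$ out via submultiplicativity, and only then one uses the linearity of $\Gamma(t,s)$ in its $\U$-argument to pull out $|\sigma(s)|_{\U}$. Confusing the order, or applying submultiplicativity on the wrong side, would lose the essential point that $\mathcal{Q}^{1/2}$ (and not $\Gamma(t,s)(\sigma(s))$) is the Hilbert--Schmidt factor.
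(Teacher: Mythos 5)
Your proof is correct and follows essentially the same route as the paper's: the paper derives the bound $\|\Gamma(t,s)(\sigma(s))\mathcal{Q}^{1/2}\|_{\text{HS}}^2 \leq \|\Gamma(t,s)\|_{\text{op}}^2|\sigma(s)|_{\U}^2\|\mathcal{Q}^{1/2}\|_{\text{HS}}^2$ by expanding the Hilbert--Schmidt norm over an ONB of $\V$, which is just the submultiplicativity inequality you invoke directly, and then concludes from $\mathcal{Q}$ being trace class. The only cosmetic difference is that you make the Tonelli step explicit where the paper leaves it implicit.
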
 
\begin{proof}
If $\{v_m\}_{m\in\mathbb{N}}$ is an ONB in $\V$, then by definition of the Hilbert-Schmidt norm and 
$\Gamma(t,s)(\sigma(s))\in L(\V,\Hi)$ yield
\begin{align*}
\|\Gamma(t,s)(\sigma(s))\mathcal{Q}^{1/2}\|_{\text{HS}}^2&=\sum_{m=1}^{\infty}|\Gamma(t,s)(\sigma(s))\mathcal{Q}^{1/2}v_m|_{\V}^2\\
&\leq \|\Gamma(t,s)(\sigma(s))\|^2_{\text{op}}\sum_{m=1}^{\infty}|\mathcal{Q}^{1/2}v_m|^2_{\V} \\
&\leq\|\Gamma(t,s)\|_{\text{op}}^2|\sigma(s)|^2_{\U}\|\mathcal{Q}^{1/2}\|_{\text{HS}}^2\,.
\end{align*} 
Since $Q$ is trace class operator, the result follows.
\end{proof}
We note that this sufficient condition on the integrability of the  "kernel function" $\Gamma$ and the "volatility" $\sigma$ share some similarity with the analogous condition for classical ambit fields (see \eqref{int-cond-ambit-classical}). 

Let us look an example of a Hambit field motivated by the analysis of Benth, R\"udiger and
S\"uss~\cite{BRS}. Consider a stochastic volatility modulated
Ornstein-Uhlenbeck process of the following form:
\begin{equation}
\label{def:SPDE}
dX(t)=\mathcal{A}X(t)\,dt+\sigma(t)\,dW(t)\,,\qquad X(0)=X_0\in\Hi\,,
\end{equation}
where $\mathcal{A}$ is a (possibly unbounded) linear operator on $\Hi$ which is densely defined and generating a
$C_0$-semigroup $\mathcal{S}$. Moreover, it is assumed that $W$ is an $\Hi$-valued Wiener process with covariance
operator $\mathcal{Q}$. Hence, we choose $\V=\Hi$. The volatility process $\sigma(t)$ is assumed to be predictable and
take values in the
space of Hilbert-Schmidt operators on $\Hi$, denoted $\mathcal{L}_{\text{HS}}(\Hi)$. Thus, we let 
$\U=\mathcal{L}_{\text{HS}}(\Hi)$, and recall 
that whenever $\Hi$ is a separable Hilbert space, $\mathcal{L}_{\text{HS}}(\Hi)$ becomes a separable Hilbert space
under the Hilbert-Schmidt norm. A mild solution of \eqref{def:SPDE} is
\begin{equation}
\label{def:SPDE:mild}
X(t)=\mathcal{S}_{t}X_0+\int_0^t\mathcal{S}_{t-s}\sigma(s)\,dW(s)\,.
\end{equation}
Note that the stochastic integral is well-defined as long as we have
\begin{equation}
\label{int-cond-sv}
\E\left[\int_0^t\|\mathcal{S}_{t-s}\sigma(s)\mathcal{Q}^{1/2}\|_{\text{HS}}^2\,ds\right]<\infty\,.
\end{equation}
Now, define $\Gamma(t,s)\in\mathcal{L}(\mathcal{L}_{\text{HS}}(\Hi))$ as
$\Gamma(t,s):\sigma\mapsto \mathcal{S}_{t-s}\sigma$. For any $\sigma\in\mathcal{L}_{\text{HS}}(\Hi)$, $\mathcal{S}_{t-s}\sigma$ becomes a linear bounded operator on $\Hi$, and since $\sigma$ is Hilbert-Schmidt, it follows that $\mathcal{S}_{t-s}\sigma$ is Hilbert-Schmidt as well. Hence, $\Gamma(t,s)$ 
maps linearly the Hilbert-Schmidt operators on $\Hi$ into itself. Moreover,  since we have
$$
\|\Gamma(t,s)\|_{\text{op}}=\sup_{\|\sigma\|_{\text{HS}}\leq 1}\|\Gamma(t,s)(\sigma)\|_{\text{HS}}=\sup_{\|\sigma\|_{\text{HS}}\leq 1}\|\mathcal{S}_{t-s}\sigma\|_{\text{HS}}\leq\|\mathcal{S}_{t-s}\|_{\text{op}}\sup_{\|\sigma\|_{\text{HS}}\leq 1}\|\sigma\|_{\text{HS}}
$$
and therefore  $\|\Gamma(t,s)\|_{\text{op}}\leq\|\mathcal{S}_{t-s}\|_{\text{op}}<\infty$.
By the general exponential growth bound on a $C_0$-semigroup and norm estimates on the 
Hilbert-Schmidt norm, we find
$$
\|\Gamma(t,s)(\sigma(s))\mathcal{Q}^{1/2}\|_{\text{HS}}^2=\|\mathcal{S}_{t-s}\sigma(s)\mathcal{Q}^{1/2}\|_{\text{HS}}^2\leq\|Q^{1/2}\|_{\text{op}}^2M\e^{w(t-s)}\|\sigma(s)\|_{\text{HS}}^2
$$
for positive constants $M$ and $w$. But then, according to Lemma~\ref{lemma:suff-int-cond}, 
it is sufficient that 
$$
\int_0^t\E[\|\sigma(s)\|_{\text{HS}}^2]\,ds<\infty\,,
$$
to ensure integrability. Thus, we conclude that the stochastic integral in $X$ defined in 
\eqref{def:SPDE:mild} is a Hambit field. 

In Benth, R\"udiger and S\"uss~\cite{BRS} a particular definition of the stochastic volatility process $\sigma$ is considered. Indeed, they propose a 
generalization of the BNS stochastic volatility model (see Barndorff-Nielsen and Shephard~\cite{BNS}) to operator-valued Ornstein-Uhlenbeck (OU) processes. To this end,
let $\mathcal{Y}(t)$ be a symmetric non-negative definite process with values in $\mathcal{L}_{\text{HS}}(\Hi)$ defined by the dynamics
$$
d\mathcal{Y}(t)=\mathbb{C}\mathcal{Y}(t)\,dt+d\mathcal{Z}(t)
$$
where $\mathcal{Z}(t)$ is an $\mathcal{L}_{\text{HS}}(\Hi)$-valued square integrable L\'evy process and $\mathbb{C}\in \mathcal{L}(\mathcal{L}_{\text{HS}}(\Hi))$. 
Under suitable conditions on $\mathbb{C}$ and $\mathcal{Z}$ we can ensure that $\mathcal{Y}(t)$ is a
symmetric, non-negative definite Hilbert-Schmidt operator (see Benth, R\"udiger and S\"uss~\cite{BRS}
for details). Moreover, following the arguments in Prop.  3.1
of Benth, R\"udiger and S\"uss~\cite{BRS}, we can show that 
$$
\text{Tr}(\mathcal{Y}(t))=\text{Tr}(\e^{\mathbb{C}t}\mathcal{Y}_0)+\text{Tr}(\int_0^t\e^{\mathbb{C}s}\,ds\E[\mathcal{Z}(1)])\,,
$$
and
$$
\E[\|\sigma(t)\|_{\text{HS}}^2]=\sum_{k=1}^{\infty}(\sigma^2(t)h_k,h_k)_{\Hi}=\text{Tr}(\mathcal{Y}(t))\,.
$$
Thus, as the $C_0$-semigroup $\exp(\mathbb{C}t)$ of $\mathbb{C}$ is Bochner integrable since $\mathbb{C}$ is bounded,
and $\mathcal{Z}(1)$ has finite expected value, it follows from the continuity of the Bochner integral that 
$t\mapsto\text{Tr}(\mathcal{Y}(t))$ is integrable on finite time intervals. This shows that we can use $\mathcal{Y}^{1/2}(t)$ as a
stochastic volatility process $\sigma$ in the definition of a Hambit field.

Let us return back to the general discussion of Hambit fields. Our next result concerns the $L^2$-proximity of two distinct Hambit fields.
\begin{lemma}
\label{lemma:approx-condition}
Suppose the Hambit fields
$$
X_i(t) = \int_0^t \Gamma_i(t,s)(\sigma_i(s)) \, dL(s)\,,\quad i=1,2\,,
$$
fulfill the premise of Lemma \ref{lemma:suff-int-cond}. Then,
\begin{align*}
\E\left[ \left| X_1(t) - X_2(t) \right|_\mathcal{H}^2 \right] &\leq \|\mathcal{Q}^{1/2}\|_{\text{HS}}^2\int_0^t\|\Gamma_1(t,s) - \Gamma_2(t,s)\|_{\text{op}}^2 \E\left[ \left| \sigma_1(s) \right|_\mathcal{U}^2 \right]\,ds\\
&\qquad+\|\mathcal{Q}^{1/2}\|_{\text{HS}}^2\int_0^t \| \Gamma_2(t,s) \|_{\text{op}}^2 \E \left[ |\sigma_1(s) - \sigma_2(s) |_{\mathcal{U}}^2 \right]\, ds\,,
\end{align*}
for all $t\geq 0$.
\end{lemma}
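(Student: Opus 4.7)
The plan is to combine linearity of the stochastic integral, the isometry \eqref{isometry}, and an add–subtract decomposition, reducing matters to the operator-norm estimate already established inside the proof of Lemma~\ref{lemma:suff-int-cond}.

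First, by linearity of the stochastic integral with respect to $L$,
\[
X_1(t)-X_2(t) = \int_0^t \bigl[\Gamma_1(t,s)(\sigma_1(s))-\Gamma_2(t,s)(\sigma_2(s))\bigr]\,dL(s).
\]
Since both pairs $(\Gamma_i,\sigma_i)$ satisfy the premise of Lemma~\ref{lemma:suff-int-cond}, the combined integrand is itself admissible, so the isometry \eqref{isometry} applies and yields
\[
\E\bigl[|X_1(t)-X_2(t)|_{\Hi}^2\bigr] = \E\int_0^t \bigl\|\bigl(\Gamma_1(t,s)(\sigma_1(s))-\Gamma_2(t,s)(\sigma_2(s))\bigr)\mathcal{Q}^{1/2}\bigr\|_{\text{HS}}^2\,ds.
\]

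Next I would introduce the add–subtract identity
\[
\Gamma_1(\sigma_1)-\Gamma_2(\sigma_2) = (\Gamma_1-\Gamma_2)(\sigma_1) + \Gamma_2(\sigma_1-\sigma_2),
\]
split $\|\cdot\|_{\text{HS}}^2$ of the sum into two contributions, and on each contribution invoke the same chain of inequalities as in the proof of Lemma~\ref{lemma:suff-int-cond}, namely
\(\|\Gamma(t,s)(u)\mathcal{Q}^{1/2}\|_{\text{HS}}^2 \leq \|\Gamma(t,s)\|_{\text{op}}^2 |u|_{\U}^2 \|\mathcal{Q}^{1/2}\|_{\text{HS}}^2\).
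Fubini's theorem, which is justified by the Lemma~\ref{lemma:suff-int-cond} integrability hypothesis, then interchanges the expectation with the $ds$-integration and produces the two integral terms displayed on the right-hand side.

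The only delicate step is the handling of $\|A+B\|_{\text{HS}}^2$ with $A=(\Gamma_1-\Gamma_2)(\sigma_1)\mathcal{Q}^{1/2}$ and $B=\Gamma_2(\sigma_1-\sigma_2)\mathcal{Q}^{1/2}$: the pointwise bound $\|A+B\|_{\text{HS}}^2\leq 2\|A\|_{\text{HS}}^2+2\|B\|_{\text{HS}}^2$ leaves an extra factor $2$ on each integral that the stated inequality does not display, and applying Minkowski at the level of the $L^2(\Omega\times[0,t])$-norm of the integrand still leaves a cross term after squaring. I therefore expect the actual proof either to absorb this universal constant implicitly or to read the inequality up to the factor $2$; this constant-chasing is the only real friction in the argument.
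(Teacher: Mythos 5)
Your proposal follows exactly the paper's route: the same add--subtract identity $\Gamma_1(\sigma_1)-\Gamma_2(\sigma_2)=(\Gamma_1-\Gamma_2)(\sigma_1)+\Gamma_2(\sigma_1-\sigma_2)$, the isometry \eqref{isometry}, and the operator-norm estimate from the proof of Lemma~\ref{lemma:suff-int-cond} applied to each piece. The ``friction'' you flag is real, and the paper does not resolve it either: its proof bounds the two stochastic integrals separately and then asserts the conclusion, but since $\E[|I_1+I_2|_{\Hi}^2]$ is not dominated by $\E[|I_1|_{\Hi}^2]+\E[|I_2|_{\Hi}^2]$ in general (the cross term need not be nonpositive), the inequality as displayed should strictly carry a factor $2$ on each term, or read $(\sqrt{Q_1}+\sqrt{Q_2})^2$; this constant is immaterial for all subsequent uses of the lemma, which only concern convergence to zero.
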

\begin{proof}
By the identity 
$$
\Gamma_1(t,s)(\sigma_1(s)) - \Gamma_2(t,s)(\sigma_2(s)) =  (\Gamma_1(t,s) - \Gamma_2(t,s))(\sigma_1(s))  + \Gamma_2(t,s)(\sigma_1(s) - \sigma_2(s)),
$$
the isomety \eqref{isometry} and (the proof of) Lemma \ref{lemma:suff-int-cond}, it holds that 
\begin{align*}
\E\left[ \left| \int_0^t (\Gamma_1(t,s) - \Gamma_2(t,s))(\sigma_1(s))dL(s) \right|_\mathcal{H}^2 \right] 
&\leq \|\mathcal{Q}^{1/2}\|_{\text{HS}}^2 \int_0^t \| \Gamma_1(t,s) - \Gamma_2(t,s) \|_{\text{op}}^2 \E\left[ \left| \sigma_1(s) \right|_\mathcal{U}^2 \right] ds\,,
\end{align*}
and 
\begin{align*}
\E\left[ \left| \int_0^t \Gamma_2(t,s)(\sigma_1(s) - \sigma_2(s)) dL(s) \right|_\mathcal{H}^2 \right] 
&\leq \|\mathcal{Q}^{1/2}\|_{\text{HS}}^2\int_0^t \| \Gamma_2(t,s) \|_{\text{op}}^2  \E\left[ \left| \sigma_1(s) -\sigma_2(s) \right|_\mathcal{U}^2 \right] ds\,, 
\end{align*}
from which the conclusion follows.
\end{proof}
As an application of the above result, we consider approximating a given Hambit field as follows: 
let $\Pi_n := \{s_i\}_{i=1}^n, n=1,2,\ldots,$ be a sequence of partitions of $[0,t]$ such that
$\max_{1 \leq i \leq n-1} |s_{i+1} - s_i| \downarrow 0$. Let, for each $n\in\mathbb{N}$,
\begin{equation}
\label{Hambit:Approx}
\Gamma_n(t,s) := \sum_{i=1}^{n-1} \Gamma(t,s_i) 1_{(s_i,s_{i+1}]}(s), \text{ and } \sigma_n(s) := \sum_{i=1}^{n-1} \sigma(s_i) 1_{(s_i,s_{i+1}]}(s),
\end{equation}
be the corresponding piecewise constant approximations of $\Gamma(t,\cdot)$ and $\sigma(\cdot)$ on $[0,t]$. Then, it follows by Lemma \ref{lemma:approx-condition} that the Hambit fields
\begin{equation}
\label{def:Xn}
X_n(t) := \int_0^t \Gamma_n(t,s)(\sigma_n(s)) \, dL(s)
\end{equation}
for $n\in\mathbb{N}$ approximate the original Hambit field, $X(t)$, if
\begin{equation}
\label{Hambit:RiemannSummability}
\int_0^t \left( \|\Gamma(t,s) - \Gamma_n(t,s)\|_{\text{op}}^2 \E\left[ \left| \sigma(s) \right|_\mathcal{U}^2 \right]+\| \Gamma_n(t,s) \|_{\text{op}}^2 \E \left[ |\sigma(s) - \sigma_n(s) |_{\mathcal{U}}^2 \right] \right) ds \to 0\,,
\end{equation}
when $n\rightarrow\infty$. Note that the approximative Hambit fields given by \eqref{def:Xn} are well defined if \eqref{Hambit:RiemannSummability} holds. Indeed, it follows by \eqref{Hambit:RiemannSummability} and Lemma \ref{lemma:approx-condition} that $\E[|X(t)-X_n(t)|_\Hi^2] \to 0$ when $n\rightarrow\infty$, which in turn means that 
$\lim_{n\rightarrow\infty}\E[|X_n(t)|_\Hi^2]=\E[|X(t)|_\Hi^2]$. 
For future reference we state the above convergence condition in an assumption.
\begin{assumption}\label{Assumption:Convergence}
A Hambit field $X(t)$ can be piecewise constantly approximated if condition \eqref{Hambit:RiemannSummability} is fulfilled, where $\Gamma_n(t,s)$ and $\sigma_n(s)$ are defined by \eqref{Hambit:Approx} and the limit is obtained by taking finer and finer partitions. 
\end{assumption}

We remark that the purpose of the above assumption is to identify conditions under which 
\begin{equation}\label{SimpleApprox}
\E[|X(t) - X_n(t) |_\Hi] \to 0\,,
\end{equation}
as we consider finer and finer partitions. Recall that the stochastic integral defining a Hambit field is built by first defining it for simple functions, and then extending it via the isometric formula \eqref{isometry}, which means that the simple functions are dense in the space of integrable functions. If the integrand 
$s \mapsto \Gamma(t,s)(\sigma(s))$ is continuous function from $[0,t]$ into the space of bounded linear operators with norm defined by 
$\|\cdot \mathcal{Q}^{1/2}\|_{\text{HS}}$, then one can choose the simple functions 
as in \eqref{Hambit:Approx}, and \eqref{SimpleApprox} follows by the isometric formula
\eqref{isometry}. 

Suppose $s \mapsto \Gamma(t,s)$ is continuous with respect to $\|\cdot\|_{\text{op}}$ on
$s\in[0,t]$, and assume $\sup_{s \in [0,t]}\E[|\sigma(s)|_\U^2] < \infty$ and 
$$
\int_0^t \|\Gamma(t,s)\|_{\text{op}}^2 \E\left[|\sigma(s) - \sigma_n(s)|_\U^2\,ds\right]\rightarrow0\,,
$$
when $n\rightarrow 0$. Then, Assumption~\ref{Assumption:Convergence} holds. Indeed,
by the triangle inequality
\begin{align*}
\int_0^t\|\Gamma_n(t,s)\|^2_{\text{op}}&\E\left[|\sigma(s)-\sigma_n(s)|_{\U}^2\right]\,ds \\
&\leq 2\int_0^t\|\Gamma(t,s)-\Gamma_n(t,s)\|_{\text{op}}^2\E\left[|\sigma(s)-\sigma_n(s)|_{\U}^2\right]\,ds \\
&\qquad+2\int_0^t\|\Gamma(t,s)\|_{\text{op}}^2\E\left[|\sigma(s)-\sigma_n(s)|_{\U}^2\right]\,ds\,.
\end{align*}  
By assumption, the second term above converges to zero as $n\rightarrow 0$. Consider the first term:
Note that 
\begin{align*}
\sup_{s\in[0,t]}\E\left[|\sigma(s)-\sigma_n(s)|_{\U}^2\right]&\leq2\sup_{s\in[0,t]}\E[|\sigma(s)|_{\U}^2]
+2\sup_{s\in[0,t]}\E[|\sigma_n(s)|_{\U}^2] \\
&\leq 4\sup_{s\in[0,t]}\E[|\sigma(s)|_{\U}^2]\,,
\end{align*}
since 
$$
\sup_{s\in[0,t]}\E[|\sigma_n(s)|_{\U}^2]=\sup_{s_i\in\Pi_n}\E[|\sigma(s_i)|^2_{\U}]
\leq\sup_{s\in[0,t]}\E[|\sigma(s)|_{\U}^2]\,.
$$
Hence,
\begin{align*}
\int_0^t\|\Gamma(t,s)-\Gamma_n(t,s)\|^2_{\text{op}}&\E\left[|\sigma(s)-\sigma_n(s)|^2_{\U}\right]\,ds
\\ 
&\leq \sup_{s\in[0,t]}\|\Gamma(t,s)-\Gamma_n(t,s)\|^2_{\text{op}}\int_0^t\E\left[|\sigma(s)-\sigma_n(s)|_{\U}^2\right]\,ds \\
&\leq 4t\sup_{s\in[0,t]}\E[|\sigma(s)|_{\U}^2]\sup_{s\in[0,t]}\|\Gamma(t,s)-\Gamma_n(t,s)\|^2_{\text{op}}\,,
\end{align*}
which tends to zero when $n\rightarrow\infty$ by uniform continuity. In conclusion, for these particular
regularity conditions on $\Gamma$ and $\sigma$ we are ensured that 
Assumption~\ref{Assumption:Convergence} holds. This case is particularly relevant when 
$\Gamma(t,s)$ is equal to a $C_0$-semigroup, $\Gamma(t,s) = \mathcal{S}_{t-s}$.

In the next Proposition we present the characteristic functional of the Hambit field:
\begin{proposition}
\label{prop:Hambit_cumulant}
Suppose that Assumption~\ref{Assumption:Convergence} holds and assume that 
$\sigma$ is independent of $L$. Then, for $h\in\Hi$, we have
$$
\E\left[\exp\left(\mathrm{i}(h,X(t))_{\Hi}\right)\right]=\E\left[\exp\left(\int_0^t\Psi_{L}\left((\Gamma(t,s)(\sigma(s)))^*h\right)\,ds\right)\right]\,,
$$
where $\Psi_L$ is the cumulant functional of $L(1)$.
\end{proposition}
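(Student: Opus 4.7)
The plan is to establish the identity first for the piecewise constant approximations $X_n(t)$ defined in \eqref{def:Xn}, which are available by Assumption~\ref{Assumption:Convergence}, and then pass to the limit $n\to\infty$ on both sides.

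For fixed $n$, the stochastic integral collapses to the finite sum
$$
X_n(t)=\sum_{i=1}^{n-1}\Gamma(t,s_i)(\sigma(s_i))\,\bigl(L(s_{i+1})-L(s_i)\bigr)\,.
$$
My first step is to condition on $\sigma$. Since $\sigma$ and $L$ are independent, under this conditioning each operator $\Gamma(t,s_i)(\sigma(s_i))$ is deterministic, while the increments $L(s_{i+1})-L(s_i)$ remain independent with $L(s_{i+1})-L(s_i)\stackrel{d}{=}L(s_{i+1}-s_i)$ by the L\'evy property. Using the adjoint identity
$$
(h,\Gamma(t,s_i)(\sigma(s_i))\xi)_{\Hi}=\bigl((\Gamma(t,s_i)(\sigma(s_i)))^*h,\xi\bigr)_{\V}\,,
$$
together with the defining relation $\E[\exp(\mathrm{i}(v,L(r))_{\V})]=\exp(r\,\Psi_L(v))$ for the cumulant functional, the conditional characteristic function factorises along the partition as
$$
\E\!\left[\exp\bigl(\mathrm{i}(h,X_n(t))_{\Hi}\bigr)\,\big|\,\sigma\right]=\exp\!\left(\sum_{i=1}^{n-1}(s_{i+1}-s_i)\,\Psi_L\bigl((\Gamma(t,s_i)(\sigma(s_i)))^*h\bigr)\right)\,,
$$
which is a Riemann sum for $\int_0^t\Psi_L((\Gamma(t,s)(\sigma(s)))^*h)\,ds$. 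Taking total expectation then yields the claimed identity with $X_n$ in place of $X$.

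It remains to pass to the limit $n\to\infty$. The left-hand side is handled by bounded convergence: Assumption~\ref{Assumption:Convergence} gives $X_n(t)\to X(t)$ in $L^2$, hence in probability, so $\exp(\mathrm{i}(h,X_n(t))_{\Hi})\to\exp(\mathrm{i}(h,X(t))_{\Hi})$ in probability, and the expectations converge because the exponentials are uniformly bounded by $1$.

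The main obstacle will be the right-hand side, since $\Psi_L$ is unbounded in general. The key observation here is that $\Psi_L$ is the logarithm of a characteristic function, so $|\exp(\Psi_L(v))|\leq 1$ for every $v\in\V$; consequently the outer integrand on the right-hand side is uniformly bounded by $1$, and dominated convergence applies once I verify almost sure convergence of the Riemann sums to the integral. This pointwise convergence in $\omega$ follows from continuity of $\Psi_L$ on $\V$ (as a L\'evy--Khintchine cumulant) together with the approximability of the integrand $s\mapsto(\Gamma(t,s)(\sigma(s)))^*h$ by the piecewise constant maps built from $\Gamma_n$ and $\sigma_n$; along a subsequence, the approximations used in Assumption~\ref{Assumption:Convergence} converge for almost every $(s,\omega)$, and continuity of the adjoint and of $\Psi_L$ then delivers the pointwise convergence needed to close the argument.
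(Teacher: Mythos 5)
Your proposal is correct and follows essentially the same route as the paper: discretize $X(t)$ along a partition, condition on $\sigma$, factor the conditional characteristic function over independent increments via the adjoint identity and the L\'evy--Khintchine formula, and pass to the limit using the $L^2$-convergence $X_n(t)\to X(t)$ guaranteed by Assumption~\ref{Assumption:Convergence} together with $|\e^{\mathrm{i}x}-\e^{\mathrm{i}y}|\leq|x-y|$. You are in fact somewhat more careful than the paper on the right-hand side, where you justify the convergence of the Riemann sums of $\Psi_L$ via the bound $|\exp(\Psi_L(v))|\leq 1$ and dominated convergence along a subsequence --- a step the paper's proof leaves implicit.
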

\begin{proof}
Let $\{s_i\}_{i=1}^n$ be a partition of $[0,t]$ and denote $\Delta s_i=s_{i+1}-s_i$ and 
$\Delta L(s_i)=L(s_{i+1})-L(s_i)$ for $i=1,\ldots,n-1$. Then, by the independent increment property of 
$L$ and double conditioning using the independence between $\sigma$ and $L$, we find 
\begin{align*}
&\E\left[\exp\left(\mathrm{i}(h,\sum_{i=1}^{n-1}\Gamma(t,s_i)(\sigma(s_i))\Delta L(s_i))_{\Hi}\right)\right] \\
&\qquad\qquad\qquad=\E\left[\E\left[\exp\left(\mathrm{i}(h,\sum_{i=1}^{n-1}\Gamma(t,s_i)(\sigma(s_i))\Delta L(s_i))_{\Hi}\right)\,\vert\,\sigma(\cdot)\right]\right] \\
&\qquad\qquad\qquad=\E\left[\prod_{i=1}^{n-1}\E\left[\exp\left(\mathrm{i}(h,\Gamma(t,s_i)(\sigma(s_i))\Delta L(s_i))_{\Hi}\right)\,\vert\,\sigma(\cdot)\right]\right] \\
&\qquad\qquad\qquad=\E\left[\prod_{i=1}^{n-1}\E\left[\exp\left(\mathrm{i}((\Gamma(t,s_i)(\sigma(s_i)))^*h,\Delta L(s_i))_{\Hi}\right)\,\vert\,\sigma(\cdot)\right]\right] \\
&\qquad\qquad\qquad=\E\left[\prod_{i=1}^{n-1}\exp\left(\Psi_L((\Gamma(t,s_i)(\sigma(s_i)))^*h)\Delta s_i\right)\right]\,.
\end{align*}
The last equality follows from the L\'evy-Kintchine formula for $L$ 
(see Peszat and Zabczyk~\cite[Thm.~4.27]{PZ}). By the Cauchy-Schwarz inequality it holds that
\begin{align*}
\E\left[\left|(h,X(t))_{\Hi} - (h,X_n(t))_{\Hi} \right| \right] 
\leq |h|_\Hi\E\left[|X(t) - X_n(t)|^2_\Hi\right]^{1/2}\,,
\end{align*}
where $X_n(t)$ is defined by \eqref{def:Xn}. Thus, invoking the inequality $|\e^{\mathrm{i}x} - \e^{\mathrm{i}y}| \leq |x - y|$, for $x,y \in \R$, and Lemma~\ref{lemma:approx-condition}, 
complete the proof.
\end{proof}
Consider $L=W$, a Wiener process in $\Hi$. Then, the cumulant functional of $W(1)$ is $\Psi_W(v)=-\frac12(\mathcal{Q}v,v)_{\V}$ (see Peszat and Zabczyk~\cite[Thm.~4.27]{PZ}). 
If $\sigma$ is independent of $W$, we find by Proposition~\ref{prop:Hambit_cumulant} 
that for any $h\in\Hi$
\begin{align*}
\E\left[\exp\left(\mathrm{i}(h,X(t))_{\Hi}\right)\right] &=\E\left[\exp\left(-\frac12\int_0^t(\mathcal{Q}(\Gamma(t,s)(\sigma(s)))^*h,(\Gamma(t,s)(\sigma(s)))^*h)_{\V}\,ds\right)\right] \\
&=\E\left[\exp\left(-\frac12\int_0^t(h,\Gamma(t,s)(\sigma(s))\mathcal{Q}(\Gamma(t,s)(\sigma(s)))^*h)_{\Hi}\,ds\right)\right] \\
&=\E\left[\exp\left(-\frac12(h,\int_0^t\Gamma(t,s)(\sigma(s))\mathcal{Q}(\Gamma(t,s)(\sigma(s)))^*\,ds h)_{\Hi}\right)\right] \,.
\end{align*}
We interpret the $ds$-integral in the last expectation as a Bochner integral in the space of operators. In conclusion, for $L=W$ and $\sigma$ independent of $W$, the Hambit field becomes
a Gaussian random variable conditional on $\sigma$.  Indeed, $X(t)\vert_{\sigma(\cdot)}$ is an
$\Hi$-valued Gaussian process with covariance operator
$$
\mathcal{Q}_{X(t)\vert_{\sigma(\cdot)}}=\int_0^t\Gamma(t,s)(\sigma(s))\mathcal{Q}(\Gamma(t,s)(\sigma(s)))^*\,ds
$$
and mean equal to zero. 


We discuss stationarity for the Hambit process. 
Let $\Gamma(t,s):=\Gamma(t-s)$ for a moment. Choosing a non-random time-independent volatility $\sigma(s):=\sigma\in\V$, we obtain the characteristic functional of the form 
$$
\E\left[\exp\left(\mathrm{i}(h,X(t))_{\Hi}\right)\right]=\exp\left(\int_0^t\Psi_{L}\left((G(s))^*h\right)\,ds\right)\,,
$$
where $G(s):=\Gamma(s)(\sigma)$. If $s\mapsto\Psi_{L}\left((G(s)^*h\right)\in L^1(\R_+)$, then we see that the characteristic functional of $X(t)$ has a limit
$$
\lim_{t\rightarrow\infty}\E\left[\exp\left(\mathrm{i}(h,X(t))_{\Hi}\right)\right]=\exp\left(\int_0^{\infty}\Psi_{L}\left((G(s))^*h\right)\,ds\right)\,.
$$
Assuming $\int_0^{\infty}\|G(s)\|_{\text{op}}^2\,ds<\infty$, we can define the $\Hi$-valued process
\begin{equation}
X_{\text{stat}}(t)=\int_{-\infty}^tG(s)\,dL(s)\,,
\end{equation}
which has characteristic functional
$$
\E\left[\exp\left(\mathrm{i}(h,X_{\text{stat}}(t))_{\Hi}\right)\right]=\exp\left(\int_0^{\infty}\Psi_{L}\left((G(s))^*h\right)\,ds\right)\,.
$$
Hence, $X(t)$, when $t\rightarrow\infty$ is equal in distribution to $X_{\text{stat}}(t)$. The process
$X_{\text{stat}}(t)$ is the stationary version of $X(t)$. We remark that ambit fields are often
defined to be stationary processes (see Barndorff-Nielsen and Schmiegel~\cite{BN-Schm}).
Letting $L=W$ again, we find that the stationary distribution of $X$ is Gaussian in $\Hi$ with covariance operator 
$$
\mathcal{Q}_{X_{\text{stat}}}=\int_0^{\infty}G(s)\mathcal{Q}G(s)^*\,ds\,,
$$
and mean equal to zero. As a specific example of an Hambit field which is asymptotically stationary, we might consider the Ornstein-Uhlenbeck process \eqref{def:SPDE} with constant non-random
volatility. In this case $\Gamma(t,s) = \mathcal{S}_{t-s}$, where $\mathcal{S}$ is the $C_0$-semigroup generated by $\mathcal{A}$.

\subsection{Relation to classical ambit fields}
We relate Hambit fields to the classical definition of ambit fields, see \eqref{def-ambit-classic}.
 
Let $\U$ be a Hilbert space of real-valued functions on a Borel measurable subset $A\subset\R^n$, $n\in\N$. 
Consider the measurable real-valued function $(t,s,x,y)\mapsto g(t,s,x,y)$, where $0\leq s\leq t<\infty$, $x\in B$, $y\in A$, and $B\subset\R^d, d\in\N$ being a Borel measurable subset. Let $\V$ be a Hilbert space of measures on the Borel subsets of $A$. For $\sigma\in\U$, we define the linear operator on $\V$
$$
\Gamma(t,s)(\sigma):=\int_{A}g(t,s,\cdot,y)\sigma(y)\,,
$$
given by 
$$
\Gamma(t,s)(\sigma)\mu=\int_{A}g(t,s,\cdot,y)\sigma(y)\,\mu(dy)\,,
$$
for any $\mu\in\V$. If we let $\Hi$ be a Hilbert space of real-valued functions on $B$, then under appropriate hypotheses on $g$ and
selection of Hilbert spaces one can have $\Gamma(t,s)(\sigma)\mu\in\Hi$ for $\mu\in\V$ and $\Gamma(t,s)\in L(\U,L(\V,\Hi))$. 
Assume $\sigma(s)$ is a $\U$-valued stochastic process such that $\Gamma(t,s)(\sigma(s))$ is integrable with respect to the 
$\V$-valued L\'evy process $L$. Then we get,
$$
X(t,x)=\int_0^t\int_{A}g(t,s,x,y)\sigma(s,y)\,L(dy,ds)\,,
$$
which is a classical ambit field. Note that we choose here to work with a kernel
function $g$ which is non-stationary in time. In $X(t,x)$ above, the L\'evy process $L$ is a measure.  A L\'evy basis is not
a measure, but very close to one (see
Barndorff-Nielsen, Benth and Veraart~\cite{BNBV-first} for a discussion of Hilbert-valued processes and 
L\'evy bases).  

To be more specific, choose $n=d=1$ and let $A=B=\R_+$. Assume $\U=\V=\Hi$, and let $\U$ be the Filipovic space (see Filipovic~\cite{F})
of absolutely continuous functions on $\R_+$, that is, real-valued functions $f$ on $\R_+$ which are weakly differentiable and such that
\begin{equation}
|f|_w^2:=f^2(0)+\int_0^{\infty}w(y)|f'(y)|^2\,dy\,,
\end{equation}  
for a non-decreasing weight function $w:\R_+\rightarrow[1,\infty)$ satisfying $\int_0^{\infty}w^{-1}(y)\,dy<\infty$. We denote 
this separable Hilbert space $\U_w$, and its inner product by $(\cdot,\cdot)_w$. For $\sigma\in\U_w$, we need to impose conditions on
$g$ such that
$$
\Gamma(t,s)(\sigma)f=\int_0^{\infty}g(t,s,\cdot,y)\sigma(y)f'(y)\,dy
$$ 
is an element in $\U_w$ for all $(t,s)$ with $s\leq t<\infty$ and $f\in\U_w$.
Next, we need to have that $\Gamma(t,s)(\sigma)\in L(\U_w)$ and $\Gamma(t,s)\in L(\U_w,L(\U_w))$,
and furthermore that $s\mapsto\Gamma(t,s)(\sigma(s))$ is integrable with respect to the 
$\U_w$-valued L\'evy process $L$. We collect the conditions in the next Lemma:
\begin{lemma}
\label{lemma:classic-ambit-filipovic}
Let $\sigma$ be a predictable $\U_w$-valued process, and suppose that $x\mapsto g(t,s,x,y)\in\U_w$ for $a.e.$ $(t,s,y)$ is such that
\begin{itemize}
\item for $a.e.$ $t\geq s\geq 0$
$$
\int_0^{\infty}w^{-1}(y)|g(t,s,\cdot,y)|^2_w\,dy<\infty\,,
$$
\item and $t\geq 0$,
$$
\int_0^{\infty}w^{-1}(y)\int_0^t|g(t,s,\cdot,y)|^2_w\E[|\sigma(s)|^2_w]\,ds\,dy<\infty\,.
$$
\end{itemize}
Then we have a classical ambit field
$$
X(t,x)=\int_0^t\int_0^{\infty}g(t,s,x,y)\sigma(y)L(dy,ds)
$$
with $X(t,\cdot)\in\U_w$ for $t<\infty$.
\end{lemma}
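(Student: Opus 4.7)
My plan is to reduce the statement to a direct application of Lemma~\ref{lemma:suff-int-cond}. The key analytical inputs will be an operator-norm estimate for $\Gamma(t,s)$ in terms of the weighted $y$-integral $\int_0^\infty w^{-1}(y)|g(t,s,\cdot,y)|_w^2\,dy$ that already appears in the hypotheses, combined with the standard Filipovic-space fact that point evaluation is a bounded linear functional on $\U_w$: for every $\sigma\in\U_w$ and every $y\in\R_+$ one has $|\sigma(y)|\le K_w|\sigma|_w$, where $K_w$ depends only on the weight through $\int_0^\infty w^{-1}(z)\,dz$.

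First, I would interpret $\Gamma(t,s)(\sigma)f=\int_0^\infty g(t,s,\cdot,y)\sigma(y)f'(y)\,dy$ as a $\U_w$-valued Bochner integral and estimate its norm. Bringing the norm inside by the Bochner triangle inequality and then splitting the weight via $1=w^{-1/2}(y)\cdot w^{1/2}(y)$ in a Cauchy--Schwarz step gives
\[
|\Gamma(t,s)(\sigma)f|_w\le\left(\int_0^\infty w^{-1}(y)|g(t,s,\cdot,y)|_w^2\,dy\right)^{1/2}\left(\int_0^\infty \sigma(y)^2 w(y) f'(y)^2\,dy\right)^{1/2}.
\]
Using the pointwise bound $|\sigma(y)|\le K_w|\sigma|_w$ in the second factor and dominating $\int_0^\infty w(y)f'(y)^2\,dy\le|f|_w^2$ yields
\[
\|\Gamma(t,s)\|_{\text{op}}^2\le K_w^2\int_0^\infty w^{-1}(y)|g(t,s,\cdot,y)|_w^2\,dy,
\]
so by the first hypothesis $\Gamma(t,s)\in\mathcal{L}(\U_w,\mathcal{L}(\U_w))$.

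Next, substituting this operator bound into the premise of Lemma~\ref{lemma:suff-int-cond} and exchanging the $dy$ and $ds$ integrals by Fubini--Tonelli produces
\[
\int_0^t\|\Gamma(t,s)\|_{\text{op}}^2\E[|\sigma(s)|_w^2]\,ds\le K_w^2\int_0^\infty w^{-1}(y)\int_0^t|g(t,s,\cdot,y)|_w^2\E[|\sigma(s)|_w^2]\,ds\,dy,
\]
which is finite by the second hypothesis. Lemma~\ref{lemma:suff-int-cond} then yields \eqref{int-cond}, so $X(t):=\int_0^t\Gamma(t,s)(\sigma(s))\,dL(s)$ is a well-defined $\U_w$-valued Hambit field. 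To recover the classical ambit form I would push the bounded pointwise-evaluation functional $\delta_x:f\mapsto f(x)$ through the stochastic integral, obtaining $X(t,x)=\delta_x X(t)=\int_0^t\int_0^\infty g(t,s,x,y)\sigma(s,y)\,L(dy,ds)$, which is the claimed ambit field.

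The main obstacle I anticipate is not the Cauchy--Schwarz bookkeeping, which is routine, but rather the justification that the defining formula for $\Gamma(t,s)(\sigma)f$ actually makes sense as a Bochner integral in $\U_w$. One must verify strong $\U_w$-measurability of $y\mapsto g(t,s,\cdot,y)\sigma(y)f'(y)$, which should follow from the hypothesis $g(t,s,\cdot,y)\in\U_w$ for a.e.\ $(t,s,y)$ combined with the separability of $\U_w$, while the scalar bound above supplies Bochner integrability. A closely related subtlety is identifying the abstract Hilbert space of measures $\V$ from the general construction with $\U_w$ itself in a way compatible with $\Gamma(t,s)(\sigma)f=\int g(t,s,\cdot,y)\sigma(y)f'(y)\,dy$, so that the $\U_w$-valued L\'evy process $L$, once paired with $\delta_x$, genuinely reproduces the scalar L\'evy-basis integral of the classical ambit field.
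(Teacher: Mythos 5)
Your proposal is correct and follows essentially the same route as the paper: both derive the operator bound $\|\Gamma(t,s)\|_{\text{op}}^2\lesssim\int_0^\infty w^{-1}(y)|g(t,s,\cdot,y)|^2_w\,dy$ by splitting the weight $1=w^{-1/2}(y)w^{1/2}(y)$ in a Cauchy--Schwarz step and using the pointwise evaluation bound $\sigma^2(y)\le 2(1+\int_0^\infty w^{-1}(z)\,dz)|\sigma|_w^2$ on the Filipovic space, and then feed this into the sufficient integrability condition of Lemma~\ref{lemma:suff-int-cond} via Tonelli to match the second hypothesis. The only cosmetic difference is that the paper estimates the two pieces of the $|\cdot|_w$-norm (the value at $0$ and the weighted derivative term) separately, whereas you bring the norm inside via the Bochner triangle inequality first; both variants carry the same measurability caveat you correctly flag.
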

\begin{proof}
For $\sigma_1,\sigma_2\in\U_w$, we obviously have $\Gamma(t,s)(\sigma_1+\sigma_2)=\Gamma(t,s)(\sigma_1)+\Gamma(t,s)(\sigma_2)$. Moreover, 
for $f_1,f_2\in\U_w$, it is also straightforward to see that $\Gamma(t,s)(\sigma)(f_1+f_2)=\Gamma(t,s)(\sigma)f_1+\Gamma(t,s)(\sigma)f_2$. Thus, to prove
that $\Gamma(t,s)\in L(\U_w,L(\U_w))$ we must show that the linear operators are bounded.

To this end, note that
$$
\|\Gamma(t,s)\|_{\text{op}}=\sup_{|\sigma|_w\leq 1}\|\Gamma(t,s)(\sigma)\|_{\text{op}}=\sup_{|\sigma|_w,|f|_w\leq 1}|\int_0^{\infty}g(t,s,\cdot,y)\sigma(y)f'(y)\,dy|_w\,.
$$
By definition
\begin{equation}
\label{norm-estimate}
|\Gamma(t,s)(\sigma)f|^2_w=(\int_0^{\infty}g(t,s,0,y)\sigma(y)f'(y)\,dy)^2+\int_0^{\infty}w(x)(\int_0^{\infty}g_x(t,s,x,y)\sigma(y)f'(y)\,dy)^2\,dy\,,
\end{equation}
where $g_x$ denotes the weak derivative with respect to the third argument of $g$. By the Cauchy-Schwartz inequality, we find for the first term
\begin{align*}
(\int_0^{\infty}g(t,s,0,y)\sigma(y)f'(y)\,dy)^2&=(\int_0^{\infty}w^{-1/2}(y)g(t,s,0,y)\sigma(y)w^{1/2}(y)f'(y)\,dy)^2 \\
&\leq\int_0^{\infty}w^{-1}(y)g^2(t,s,0,y)\sigma^2(y)\,dy\int_0^{\infty}w(y)|f'(y)|^2\,dy \\
&\leq \int_0^{\infty}w^{-1}(y)g^2(t,s,0,y)\sigma^2(y)\,dy|f|_w^2\,.
\end{align*}
But, by the fundamental theorem of calculus and Cauchy-Schwartz' inequality again,
\begin{align*}
\sigma^2(y)&=(\sigma(0)+\int_0^y\sigma'(z)\,dz)^2 \\ 
&\leq 2\sigma^2(0)+2(\int_0^y\sigma'(z)\,dz)^2 \\
&=2\sigma^2(0)+2(\int_0^yw^{-1/2}(z)w^{1/2}(z)\sigma'(z)\,dz)^2 \\
&\leq 2\sigma^2(0)+2\int_0^yw^{-1}(z)\,dz\int_0^yw(z)|\sigma'(z)|^2\,dz \\
&\leq 2(1+\int_0^{\infty}w^{-1}(z)\,dz)|\sigma|_w^2\,.
\end{align*}
Hence, we find
$$
(\int_0^{\infty}g(t,s,0,y)\sigma(y)f'(y)\,dy)^2\leq 2\int_0^{\infty}w^{-1}(y)g^2(t,s,0,y)\,dy(1+\int_0^{\infty}w^{-1}(y)\,dy)|\sigma|_w^2|f|_w^2\,.
$$
For the second integral in \eqref{norm-estimate}, it follows by similar arguments that,
\begin{align*}
\int_0^{\infty}w(x)&(\int_0^{\infty}g_x(t,s,x,y)\sigma(y)f'(y)\,dy)^2\,dy \\
&=\int_0^{\infty}w(x)(\int_0^{\infty}w^{-1/2}(y)g_x(t,s,x,y)\sigma(y)w^{1/2}(y)f'(y)\,dy)^2\,dx \\
&\leq\int_0^{\infty}w(x)\int_0^{\infty}w^{-1}(y)g_x^2(t,s,x,y)\sigma^2(y)\,dy\,dx|f|_w^2 \\
&=\int_0^{\infty}\int_0^{\infty}w(x)w^{-1}(y)g^2_x(t,s,x,y)\,dx\sigma^2(y)\,dy|f|_w^2 \\
&\leq2\int_0^{\infty}w^{-1}(y)(\int_0^{\infty}w(x)g_x^2(t,s,x,y)\,dx)\,dy(1+\int_0^{\infty}w^{-1}(z)\,dz)|\sigma|_w^2|f|_w^2\,.
\end{align*}
These estimations imply 
$$
\|\Gamma(t,s)(\sigma)\|^2_{\text{op}}\leq |\sigma|^2_w 2(1+\int_0^{\infty}w^{-1}(z)\,dz)\int_0^{\infty}w^{-1}(y)|g(t,s,\cdot,y)|^2_w\,dy\,,
$$
and 
$$
\|\Gamma(t,s)\|^2_{\text{op}}\leq 2(1+\int_0^{\infty}w^{-1}(z)\,dz)\int_0^{\infty}w^{-1}(y)|g(t,s,\cdot,y)|^2_w\,dy\,,
$$
and therefore $\Gamma\in L(\U_w,L(\U_w))$ by the assumptions of the Lemma. 

For the $L$-integrability, we first note that since $\sigma(s)$ is assumed to be predictable, it
follows that $s\mapsto \Gamma(s,t)(\sigma(s))$ is predictable. We must show that the integrability condition \eqref{int-cond} holds:
\begin{align*}
\E\left[\int_0^t\|\Gamma(t,s)(\sigma(s))\mathcal{Q}^{1/2}\|_{\text{HS}}^2\,ds\right]&\leq\E\left[\int_0^t\|\Gamma(t,s)(\sigma(s))\|_{\text{op}}^2\|\mathcal{Q}^{1/2}\|_{\text{HS}}^2\,ds\right] \\
&\leq2\|\mathcal{Q}^{1/2}\|_{\text{HS}}^2(1+\int_0^{\infty}w^{-1}(z)\,dz) \\
&\qquad\qquad\times\int_0^{\infty}w^{-1}(y)\int_0^t|g(t,s,\cdot,y)|_w^2\E\left[|\sigma(s)|^2_w\right]\,ds\,dy \,,
\end{align*}
which is finite by the assumptions of the Lemma. Hence, the proof is complete.
\end{proof}
In the representation of $X(t,x)$, we have used the notation $L(dy,ds)=\partial_y L(y,ds)\,dy$, where $\partial_y$ is the partial (weak) derivative with respect
to $y$. We remark that we can define the classical ambit field
$$
X(t,x)=\int_0^t\int_{A}g(t,s,x,y)\sigma(s)\,L(dy,ds)
$$
by
$$
X(t,x)=\int_0^t\int_0^{\infty}g(t,s,x,y)\mathrm{1}(y\in A)\sigma(s)\,L(dy,ds)\,,
$$
for some Borel measurable subset $A\subset\R_+$. We note that 
$$
|g(t,s,\cdot,y)\mathrm{1}(y\in A)|_w=\mathrm{1}(y\in A)|g(t,s,\cdot,y)|_w\,,
$$
and therefore
$$
\int_0^{\infty}w^{-1}(y)|g(t,s,\cdot,y)\mathrm{1}(y\in A)|_w^2\,dy=\int_{A}w^{-1}(y)|g(t,s,\cdot,y)|_w^2\,dy\leq
\int_0^{\infty}w^{-1}(y)|g(t,s,\cdot,y)|_w^2\,dy\,.
$$
Thus, either we can impose slightly weaker $\U_w$-norm-integrability conditions on $g$ (the middle estimate above), or we can assume the strong one given in Lemma~\ref{lemma:classic-ambit-filipovic}.
In the latter case, we observe that such a condition provides us with a classical ambit field for {\it all} choices of $A$.  
%

\subsection{Representation of Hambit fields in terms of volatility modulated Volterra processes}

We show that a Hambit field can be represented as a countable sum of volatility modulated Volterra
(VMV) processes under a certain regularity condition
on the stochastic volatility field $\sigma$: 
\begin{proposition}
\label{prop:ambit-lss}
Let $\{u_n\}_{n\in\mathbb{N}}, \{v_m\}_{m\in\mathbb{N}}$ and $\{h_k\}_{k\in\mathbb{N}}$ be 
ONB's in $\U, \V$ and
$\Hi$, resp. Suppose that 
$$
\int_0^t\|\Gamma(t,s)\|^2_{\text{op}}\left(\sum_{n=1}^{\infty}\E[(\sigma(s),u_n)^2_{\U}]^{1/2}\right)^2\,ds<\infty\,.
$$
Then the Hambit field $X(t)$ can be represented in $L^2(\Omega)$ as
$$
X(t)=\sum_{n,m,k=1}^{\infty}Y_{n,m,k}(t)\,h_k\,,
$$
where $t\mapsto Y_{n,m,k}(t)$, $0\leq t\leq T, n,m,k\in\mathbb{N}$ are real-valued VMV processes defined by
$$
Y_{n,m,k}(t)=\int_0^t(\Gamma(t,s)(u_n)v_m,h_k)_{\Hi}(\sigma(s),u_n)_{\U}\,dL_m(s)\,,
$$
and $L_m:=(L,v_m)_{\V}, m\in\mathbb{N}$ are real-valued square integrable L\'evy processes with zero mean. 
\end{proposition}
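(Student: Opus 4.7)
The plan is to identify the proposed triple series with finite-rank truncations of the Hambit field and then pass to the limit via the isometry \eqref{isometry}. Let $P_N, Q_M, R_K$ denote the orthogonal projections in $\U$, $\V$, $\Hi$ onto the spans of $\{u_n\}_{n=1}^N$, $\{v_m\}_{m=1}^M$, $\{h_k\}_{k=1}^K$ respectively, and introduce the truncated Hambit field
$$
X_{N,M,K}(t) := \int_0^t R_K\, \Gamma(t,s)(P_N \sigma(s))\, Q_M\, dL(s).
$$
I will first show that $X_{N,M,K}(t) = \sum_{n=1}^N \sum_{m=1}^M \sum_{k=1}^K Y_{n,m,k}(t)\, h_k$ in $L^2(\Omega;\Hi)$, and then that $X_{N,M,K}(t) \to X(t)$ in $L^2(\Omega;\Hi)$ as $N,M,K \to \infty$; together these give the stated representation.

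For the identification, I use linearity of $\Gamma(t,s)$ together with the coordinate expansions of the three projections to write
$$
R_K\, \Gamma(t,s)(P_N \sigma(s))\, Q_M = \sum_{n,m,k} (\sigma(s), u_n)_\U\, (\Gamma(t,s)(u_n) v_m, h_k)_\Hi\, (h_k \otimes v_m^*),
$$
where the (finite) sum runs over $n \leq N$, $m \leq M$, $k \leq K$ and $(h_k \otimes v_m^*)(v) := (v, v_m)_\V h_k$. A standard approximation argument (verify on simple predictable processes and extend via the isometry) yields
$\int_0^t \phi(s)(h_k \otimes v_m^*)\, dL(s) = h_k \int_0^t \phi(s)\, dL_m(s)$
for every real-valued predictable $\phi$ in the relevant $L^2$-space, where $L_m := (L, v_m)_\V$. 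Substituting and invoking linearity of the stochastic integral on the finite sum produces exactly $\sum_{n \leq N, m \leq M, k \leq K} Y_{n,m,k}(t)\, h_k$.

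For the convergence, the isometry \eqref{isometry} reduces the task to estimating
$$
\E\left[\int_0^t \bigl\|(\Gamma(t,s)(\sigma(s)) - R_K \Gamma(t,s)(P_N \sigma(s)) Q_M)\, \mathcal{Q}^{1/2}\bigr\|_{\text{HS}}^2\, ds\right].
$$
I split the operator inside the HS norm into three pieces,
$$
\Gamma(t,s)((I - P_N)\sigma(s)) \;+\; \Gamma(t,s)(P_N \sigma(s))(I - Q_M) \;+\; (I - R_K)\Gamma(t,s)(P_N \sigma(s)) Q_M,
$$
and estimate each (after right-multiplication by $\mathcal{Q}^{1/2}$) via $\|AB\|_{\text{HS}} \leq \|A\|_{\text{op}} \|B\|_{\text{HS}}$ and its symmetric form. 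The first factor tends to zero pointwise in $(s,\omega)$ as $N \to \infty$ by Parseval, while the second and third factors are controlled by $\|(I - Q_M)\mathcal{Q}^{1/2}\|_{\text{HS}}$ and $\sum_{k > K} |((\Gamma(t,s)(P_N \sigma(s)) Q_M \mathcal{Q}^{1/2})^* h_k)|_\V^2$, both of which vanish as $M,K\to\infty$ because $\mathcal{Q}^{1/2}$ is Hilbert--Schmidt (a consequence of $\mathcal{Q}$ being trace class).

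The main obstacle is the dominated convergence step: each of the three pieces must be uniformly dominated by a function whose integral in $(s,\omega)$ is finite. The common dominator is a constant multiple of $\|\Gamma(t,s)\|_{\text{op}}^2 |\sigma(s)|_\U^2 \|\mathcal{Q}^{1/2}\|_{\text{HS}}^2$, and this is precisely where the hypothesis is invoked: using $\E[|\sigma(s)|_\U^2] = \sum_n \E[(\sigma(s), u_n)_\U^2] \leq \bigl(\sum_n \E[(\sigma(s), u_n)_\U^2]^{1/2}\bigr)^2$, the assumed integrability furnishes an integrable dominator. Dominated convergence then yields $\E[|X(t) - X_{N,M,K}(t)|_\Hi^2] \to 0$, which combined with the identification in Step~2 proves the proposition.
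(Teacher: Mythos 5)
Your proof is correct, but it follows a genuinely different route from the paper's. The paper proceeds by three successive basis expansions: first $L=\sum_m L_m v_m$, turning $X(t)$ into a sum of $\Hi$-valued integrals against the real-valued martingales $L_m$; then the ONB expansion of each such integral in $\{h_k\}$; and finally the expansion $\sigma(s)=\sum_n(\sigma(s),u_n)_\U u_n$ inside the integrand, where the interchange of the $n$-sum with the stochastic integral is justified by the It\^o isometry for $L_m$ together with Minkowski's integral inequality --- this last step is exactly where the hypothesis $\int_0^t\|\Gamma(t,s)\|_{\text{op}}^2\bigl(\sum_n\E[(\sigma(s),u_n)_\U^2]^{1/2}\bigr)^2\,ds<\infty$ is used, to make the $L^2(\Omega)$-norm of the tail $\sum_{n>N}$ small. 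You instead truncate the operator-valued integrand by the finite-rank projections $R_K\Gamma(t,s)(P_N\sigma(s))Q_M$, identify the truncated integral with the finite triple sum (the rank-one identity $\int\phi(s)(h_k\otimes v_m^*)\,dL=h_k\int\phi\,dL_m$ is indeed routine on simple processes), and pass to the limit with a single isometry-plus-dominated-convergence argument. Two remarks on what each approach buys. First, your argument only ever uses the dominator $\|\Gamma(t,s)\|^2_{\text{op}}|\sigma(s)|_\U^2\|\mathcal{Q}^{1/2}\|_{\text{HS}}^2$, i.e.\ the weaker integrability condition of Lemma~\ref{lemma:suff-int-cond}; the full strength of the stated hypothesis is not needed for convergence of the cubic partial sums, which is a small gain in generality. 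Second, and conversely, what you prove is convergence of the cubic truncations $\sum_{n\le N,m\le M,k\le K}$, whereas the paper's Minkowski estimate gives absolute ($\ell^1$-type) control of the $n$-tail, which is what one needs if the triple series is to be read as an iterated or unconditionally convergent sum; so the stronger hypothesis is doing real work in the paper's version, and the two proofs establish slightly different (both legitimate) readings of the displayed series. One minor point to tidy: in your third error term $(I-R_K)\Gamma(t,s)(P_N\sigma(s))Q_M\mathcal{Q}^{1/2}$ the operator being cut off itself depends on $N$ and $M$, so for the joint limit you should add and subtract the untruncated $\Gamma(t,s)(\sigma(s))\mathcal{Q}^{1/2}$ before sending $K\to\infty$; this is easily done and does not affect the conclusion.
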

\begin{proof}
We can represent $L(t)$ by
$$
L(t)=\sum_{m=1}^{\infty}(L(t),v_m)_{\V}v_m\,,
$$
where $L_m:=(L,v_m)_{\V}$ is a real-valued square integrable mean zero L\'evy process. Hence,
$$
X(t)=\sum_{m=1}^{\infty}\int_0^t\Gamma(t,s)(\sigma(s))v_m\,dL_m(s)\,.
$$
But $\Gamma(t,s)(\sigma(s))v_m\in\Hi$, and thus the stochastic integral $\int_0^t\Gamma(t,s)(\sigma(s))v_m\,dL_m(s)\in\Hi$
as well. Hence, $a.s.$,
\begin{align*}
\int_0^t\Gamma(t,s)(\sigma(s))v_m\,dL_m(s)&=\sum_{k=1}^{\infty}(\int_0^t\Gamma(t,s)(\sigma(s))v_m\,dL_m(s),h_k)_{\Hi}h_k \\
&=\sum_{k=1}^{\infty}\int_0^t(\Gamma(t,s)(\sigma(s))v_m,h_k)_{\Hi}\,dL_m(s) h_k\,.
\end{align*}
The last equality follows by definition of the stochastic integral of an $\Hi$-valued adapted process
with respect to a real-valued L\'evy process.
This means that
$$
X(t)=\sum_{m,k=1}^{\infty}\int_0^t(\Gamma(t,s)(\sigma(s))v_m,h_k)_{\Hi}\,dL_m(s) h_k\,.
$$
Finally, express $\sigma(s)=\sum_{n=1}^{\infty}(\sigma(s),u_n)_{\U}u_n$ to find
$$
(\Gamma(t,s)(\sigma(s))v_m,h_k)_{\Hi}=\sum_{n=1}^{\infty}(\sigma(s),u_n)_{\U}(\Gamma(t,s)(u_n)v_m,h_k)_{\Hi}\,,
$$
by linearity of the inner product and continuity of the operator $\Gamma(t,s)$. We show next that 
$$
\int_0^t(\Gamma(t,s)(\sigma(s))v_m,h_k)_{\Hi}\,dL_m(s)=\sum_{n=1}^{\infty}\int_0^t(\Gamma(t,s)(u_n)v_m,h_k)_{\Hi}(\sigma(s),u_n)_{\U}\,dL_m(s)\,.
$$
Note that, as the L\'evy process $L_m$ is a square-integrable martingale, we find by the definition of stochastic integration 
with respect to martingales (see e.g. Protter~\cite{Pr})
\begin{align*}
&\E\left[\left(\int_0^t(\Gamma(t,s)(\sigma(s))v_m,h_k)_{\Hi}\,dL_m(s)-\sum_{n=1}^{N}\int_0^t(\Gamma(t,s)(u_n)v_m,h_k)_{\Hi}(\sigma(s),u_n)_{\U}\,dL_m(s)\right)^2\right] \\
&\qquad\qquad=\E\left[\left(\int_0^t\sum_{n=N+1}^{\infty}(\Gamma(t,s)(u_n)v_m,h_k)_{\Hi}(\sigma(s),u_n)_{\U}\,dL_m(s)\right)^2\right] \\
&\qquad\qquad=\E[L^2_m(1)]\int_0^t\E\left[\left(\sum_{n=N+1}^{\infty}(\Gamma(t,s)(u_n)v_m,h_k)_{\Hi}(\sigma(s),u_n)_{\U}\right)^2\right]\,ds
\end{align*}
By Minkowski's inequality (see Folland~\cite[p.~186]{Folland}), we have
\begin{align*}
\E&\left[\left(\sum_{n=N+1}^{\infty}(\Gamma(t,s)(u_n)v_m,h_k)_{\Hi}(\sigma(s),u_n)_{\U}\right)^2\right]^{1/2} \\
&\qquad\qquad\leq
\sum_{n=N+1}^{\infty}\E\left[(\Gamma(t,s)(u_n)v_m,h_k)^2_{\Hi}(\sigma(s),u_n)^2_{\U}\right]^{1/2} \\
&\qquad\qquad\leq\|\Gamma(t,s)\|_{\text{op}}\sum_{n=N+1}^{\infty}\E\left[(\sigma(s),u_n)_{\U}^2\right]^{1/2}\,,
\end{align*}
since, using that the bases are orthonormal,
$$
|(\Gamma(t,s)(u_n)v_m,h_k)_{\Hi}|^2\leq |\Gamma(t,s)(u_n)v_m)|_{\Hi}^2|h_k|_{\Hi}^2\leq \|\Gamma(t,s)(u_n)\|_{\text{op}}^2
|v_m|_{\V}^2\leq\|\Gamma(t,s)\|_{\text{op}}^2|u_n|_{\U}^2\,.
$$
Hence, 
\begin{align*}
\int_0^t\E&\left[\left(\sum_{n=N+1}^{\infty}(\Gamma(t,s)(u_n)v_m,h_k)_{\Hi}(\sigma(s),u_n)_{\U}\right)^2\right]\,ds \\
&\qquad\qquad\leq
\int_0^t\|\Gamma(t,s)\|_{\text{op}}^2\left(\sum_{n=N+1}^{\infty}\E[(\sigma(s),u_n)_{\U}^2]^{1/2}\right)^2\,ds\,,
\end{align*}
which tends to zero as $N\rightarrow\infty$ by assumption. The result follows.

\end{proof}
Remark that the real-valued L\'evy processes $\{L_m\}_{m=1}^{\infty}$ defined in 
Prop.~\ref{prop:ambit-lss} above are not 
independent. They are not even zero correlated unless the ONB $\{v_m\}_{k\in\mathbb{N}}$ consists of the eigenvectors of 
$\mathcal{Q}$. Indeed, we have
$$
\E[(L(t),v_m)_{\V}(L(t),v_k)_{\V}]=(\mathcal{Q}v_m,v_k)_{\V}\,,
$$ 
for $k,m\in\N$. Further, we also observe that if $\Gamma(t,s)=\Gamma(t-s)$, i.e., the kernel is specified
in a stationary form, then the real-valued processes $Y_{n,m,k}(t)$ in Prop.~\ref{prop:ambit-lss}
become,
$$
Y_{n,m,k}(t)=\int_0^t(\Gamma(t-s)(u_n)v_m,h_k)_{\Hi}(\sigma(s),u_n)_{\U}\,dL_m(s)\,,
$$
which is in fact a L\'evy semistationary (LSS) process. Barndorff-Nielsen, Benth and 
Veraart~\cite{BNBV-spot} applied LSS processes to model spot prices in energy market. Further,
using factor models involving L\'evy-driven continuous-time autoregressive moving average processes
to describe electricity spot prices,
Benth et al.~\cite{BKMV} extended the classical commodity spot market models based on Wiener-driven Ornstein-Uhlenbeck processes. The class of continuous-time autoregressive moving average
processes is a special case of LSS processes (see Brockwell~\cite{Br}, and Benth and \v{S}altyt\.{e} Benth~\cite{BSB-book} for an analysis and discussion in weather modelling).
Barndorff-Nielsen, Benth and Veraart~\cite{BNBV-forward} proposed ambit fields as a modeling tool for energy forward markets. Our result in Prop.~\ref{prop:ambit-lss} shows that any ambit field can be represented as an infinite LSS (or VMV) factor model, providing a strong theoretical argument for the rationale in using LSS (or VMV) processes and ambit fields as modelling devices for
commodity market prices. 

The integrability condition on $\Gamma$ and $\sigma$ in Prop.~\ref{prop:ambit-lss} is stronger than the sufficient condition
in Lemma~\ref{lemma:suff-int-cond} for well-definedness of the Hambit field $X$. In fact, by Parseval's identity (and Tonelli's theorem)
$$
\E[|\sigma(s)|_{\U}^2]=\sum_{n=1}^{\infty}\E[(\sigma(s),u_n)_{\U}^2]
$$
for $\{u_n\}_{n\in\mathbb{N}}$ ONB of $\U$. As long as $\sum_{n=1}^{\infty}\E[(\sigma(s),u_n)_{\U}^2]^{1/2}<\infty$,
there exists $N\in\mathbb{N}$ such that $\E[(\sigma(s),u_n)_{\U}^2]^{1/2}<1$ for $n\geq N$. Hence
$\E[(\sigma(s),u_n)_{\U}^2]\leq\E[(\sigma(s),u_n)_{\U}^2]^{1/2}$. Thus, the condition in Prop.~\ref{prop:ambit-lss}
implies that the condition of Lemma~\ref{lemma:suff-int-cond} holds. Suppose now that $\{a_n\}_{n\in\mathbb{N}}$ is
a sequence of strictly positive numbers such that $\sum_{n=1}^{\infty}a_n^{-1}<\infty$. Then, by the
Cauchy-Schwarz inequality
$$
\left(\sum_{n=1}^{\infty}\E[(\sigma(s),u_n)^2_{\U}]^{1/2}\right)^2=\left(\sum_{n=1}^{\infty}a_n^{-1}\right)\left(\sum_{n=1}^{\infty}
a_n\E[(\sigma(s),u_n)^2_{\U}]\right)\,.
$$
Thus, 
\begin{equation}
\sum_{n=1}^{\infty}a_n\int_0^t\|\Gamma(t,s)\|_{\text{op}}^2\E[(\sigma(s),u_n)^2_{\U}]\,ds<\infty\,,
\end{equation}
is a sufficient condition for Prop.~\ref{prop:ambit-lss} to hold.

Let us consider an example. Let $U$ be a $\U$-valued square-integrable L\'evy process with zero mean and
$\eta\in L^2(\R_+)$. Assume that $\sigma(t)$ is the $\U$-valued OU process
$$
\sigma(t)=\int_0^t\eta(t-s)\,dU(s)\,.
$$
This is a very simple definition of an LSS-process with values in the Hilbert space $\U$. 
As $U$ is square-integrable, it has a covariance operator $\mathcal Q_U$ on $\U$, and we assume that the ONB $\{u_n\}$
is the set of eigenvectors of $\mathcal{Q}_U$ with corresponding eigenvalues $\lambda_n$. As $\mathcal{Q}_{U}$ is positive definite, we have $0\leq(\mathcal{Q}_{U}u_n,u_n)_{\U}=\lambda_n$, i.e., all eigenvalues are non-negative. We find that
$$
(\sigma(t),u_n)_{\U}=\sum_{k=1}^{\infty}\int_0^t(\eta(t-s)u_k,u_n)_{\U}\,dU_k(s)=\sum_{k=1}^{\infty}\int_0^t\eta(t-s)(u_k,u_n)_{\U}\,dU_k(s)=\int_0^t\eta(t-s)\,dU_n(s)
$$ 
where $U_n(s)=(U(s),u_n)_{\U}$ is a square integrable real-valued L\'evy process with zero mean.  But then,
$$
\E[(\sigma(t),u_n)^2_{\U}]=\E[(\int_0^t\eta(t-s)\,dU_n(s))^2]=\lambda_n\int_0^t\eta^2(s)\,ds\,.
$$
The integrability condition in Prop.~\ref{prop:ambit-lss} thus becomes
$$
\int_0^t\|\Gamma(t,s)\|_{\text{op}}^2\left(\sum_{n=1}^{\infty}\sqrt{\lambda_n}(\int_0^s\eta^2(v)\,dv)^{1/2}\right)^2\,ds=\left(\sum_{n=1}^{\infty}\sqrt{\lambda_n}\right)^2\int_0^t\|\Gamma(t,s)\|_{\text{op}}^2\int_0^s\eta^2(v)\,dv\,ds\,.
$$
Hence, since $\eta\in L^2(\R_+)$, the integrability condition in Prop.~\ref{prop:ambit-lss} is satisfied if 
$\int_0^t\|\Gamma(t,s)\|_{\text{op}}^2\,ds<\infty$ and $\sum_{n=1}^{\infty}\sqrt{\lambda_n}<\infty$. Note that $\infty>\|\mathcal{Q}_{\U}\|_{\text{HS}}^2=\sum_{n=1}^{\infty}\lambda_n$, which is weaker than the summability of $\sqrt{\lambda_n}$. We find that $\text{Tr}(\mathcal{Q}_{\U}^{1/2})=\sum_{n=1}^{\infty}\sqrt{\lambda_n}$, so the summability of $\sqrt{\lambda_n}$ is equivalent to assuming that $\mathcal{Q}_{\U}^{1/2}$ has finite trace. 

A natural application of Prop.~\ref{prop:ambit-lss} is to truncate the infinite sum in order
to obtain an approximation of the Hambit field $X$. 
For this purpose, define
\begin{equation}\label{def:XNMK}
X_{N,M,K}(t) := \sum_{n=1}^N \sum_{m=1}^M \sum_{k=1}^K Y_{n,m,k}(t) h_k,
\end{equation}
for $N,M,K \geq 1$, where $Y_{n,m,k}(t)$ is given in Proposition \ref{prop:ambit-lss}. 
It moreover follows by a repeated application of Minkowski's inequality (see Folland~\cite[p.~186]{Folland}) that
$$
\left( \E\left[|X(t) - X_{N,M,K}(t)|_\Hi^2 \right] \right)^{1/2} \leq \sum_{n=N+1}^\infty \sum_{m=M+1}^\infty \sum_{k=K+1}^\infty |h_k|_\Hi \left( \E\left[|Y_{n,m,k}(t)|^2 \right] \right)^{1/2}\,,
$$
and note furthermore that
$$
\E\left[|Y_{n,m,k}(t)|^2 \right] \leq (\E[|L_m(1)|^2])^{1/2} \int_0^t |(\Gamma(t,s)(u_n)v_m,h_k)_{\Hi}|^2 \E[|(\sigma(s),u_n)_\U|^2] \,ds\,.
$$
Given the respective ONB's, one can thus make the error induced by means of the truncated Hambit field \eqref{def:XNMK} arbitrarily small. The rate of convergence, on the other hand, is not easily derived in 
the general set-up, and requires some more structure on the Hilbert spaces to be quantified.

Sometimes it may be convenient to express the Hambit field in terms of a finite set of given "nice" vectors in $\Hi$. To this end, let
$\{\xi_1,\xi_2,\ldots,\xi_n\}$ be $n$ linearly independent elements of $\Hi$, and denote by $\Hi_n$ the subspace of $\Hi$ spanned by these.
Note that $\{\xi_i\}_{i=1}^n$ may be a subset of the basis functions of $\Hi$, but in general they are not. Introduce the projection operator 
$\mathcal{P}_n:\Hi\rightarrow\Hi_n$ defined as
\begin{equation}
\mathcal{P}_n(f)=(f,\vec{\xi}_n)_{\Hi}'H_n^{-1}\vec{\xi}_n\,,
\end{equation}
for $f\in\Hi$. Here, $(f,\vec{\xi}_n)_{\Hi}'=((f,\xi_1)_{\Hi},\ldots,(f,\xi_n)_{\Hi})'\in\R^n$, $\vec{\xi}_n$ is the vector with coordinates
$\xi_1,\xi_2,\ldots,\xi_n$, and $H_n$ the symmetric $n\times n$-matrix with coordinates $(\xi_i,\xi_j)_{\Hi}, i,j=1,\ldots,n$ assumed
to be invertible. We recall from basic functional analysis that $\mathcal{P}_n(f)$ is the element in $\Hi_n$ which minimizes the distance,
that is, $|f-\mathcal{P}_n(f)|_{\Hi}=\inf_{g\in\Hi_n}|f-g|_{\Hi}$. 
In the next Proposition we state the representation of
an Hambit field projected down on $\Hi_n$ in the Gaussian case:
\begin{proposition}
Let $L=W$ be an $\V$-valued Wiener process. Then for any $n\in\N$ there exists an $n$-dimensional standard Brownian motion $\vec{B}(t)=(B_1(t),...,B_n(t))$ 
such that
$$
\mathcal{P}_n(X(t))=\int_0^t\gamma(t,s)\,d\vec{B}(s)'H_n^{-1}\vec{\xi}_n\,,
$$
with $\gamma(t,s)$ being the square-root of the 
symmetric, positive definite stochastic $n\times n$ variance-covariance
matrix 
$$
C(t,s)=\{(\mathcal{Q}^{1/2}\Gamma(t,s)(\sigma(s))^*\xi_i,\mathcal{Q}^{1/2}\Gamma(t,s)(\sigma(s))^*\xi_j)_{\V}\}_{i,j=1}^{n}\,.
$$
\end{proposition}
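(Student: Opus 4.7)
The plan is to unwind the projection $\mathcal{P}_n$, reduce the claim to a representation of an $\R^n$-valued continuous martingale, and then apply L\'evy's characterization to produce the required Brownian motion. By the definition of $\mathcal{P}_n$,
$$
\mathcal{P}_n(X(t))=(X(t),\vec{\xi}_n)_{\Hi}'\,H_n^{-1}\,\vec{\xi}_n\,,
$$
so it suffices to exhibit a standard $n$-dimensional Brownian motion $\vec{B}$ on $[0,t]$ with $(X(t),\vec{\xi}_n)_{\Hi}=\int_0^t\gamma(t,s)\,d\vec{B}(s)$. For each $i$, the commutativity of the Hilbert-space stochastic integral with the bounded linear functional $(\cdot,\xi_i)_{\Hi}$ yields
$$
(X(t),\xi_i)_{\Hi}=\int_0^t((\Gamma(t,s)(\sigma(s)))^*\xi_i,dW(s))_{\V}\,.
$$

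Next, I would introduce the $\R^n$-valued continuous martingale
$$
\vec{M}(s):=\int_0^s\Phi(t,r)\,dW(r),\qquad 0\leq s\leq t,
$$
where $\Phi(t,r)\colon\V\to\R^n$ is the bounded linear map $v\mapsto(((\Gamma(t,r)(\sigma(r)))^*\xi_i,v)_{\V})_{i=1}^n$. A direct computation of the quadratic cross-variation using the isometry \eqref{isometry} (componentwise) gives
$$
[M_i,M_j]_s=\int_0^s(\mathcal{Q}^{1/2}(\Gamma(t,r)(\sigma(r)))^*\xi_i,\mathcal{Q}^{1/2}(\Gamma(t,r)(\sigma(r)))^*\xi_j)_{\V}\,dr=\int_0^s C(t,r)_{ij}\,dr\,,
$$
so $[\vec{M}]_s=\int_0^s C(t,r)\,dr$, and in particular $\vec{M}(t)=(X(t),\vec{\xi}_n)_{\Hi}$.

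I would then define the candidate Brownian motion by
$$
\vec{B}(s):=\int_0^s\gamma(t,r)^{-1}\,d\vec{M}(r),\qquad 0\leq s\leq t\,,
$$
which is meaningful because, under the positive-definiteness hypothesis on $C(t,s)$, the symmetric square root $\gamma(t,s)$ is invertible, and the predictability of $\sigma$ together with the continuity of the square-root and inverse on symmetric positive definite matrices makes $\gamma(t,\cdot)^{-1}$ a predictable integrand. Using the symmetry of $\gamma(t,r)$ and the identity $\gamma(t,r)^2=C(t,r)$, one finds
$$
[\vec{B}]_s=\int_0^s\gamma(t,r)^{-1}C(t,r)\gamma(t,r)^{-1}\,dr=sI_n\,,
$$
so that by L\'evy's characterization $\vec{B}$ is a standard $n$-dimensional Brownian motion. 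Integrating $\gamma(t,\cdot)$ back against $\vec{B}$ recovers $\vec{M}(t)$, and substitution into the projection formula gives the claim.

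The main obstacle is the delicate handling of the \emph{stochastic} matrix $\gamma(t,s)$: one must verify the predictability and integrability of $\gamma(t,s)^{-1}$ as an integrand against the $\R^n$-valued martingale $\vec{M}$, and more conceptually ensure that L\'evy's theorem is applicable on the filtration that supports both $W$ and $\sigma$. The strict positive-definiteness assumption is what lets this go through cleanly; were $C(t,s)$ merely positive semidefinite on a set of positive measure, one would need to enlarge the probability space with an independent Brownian motion to complete $\vec{B}$, but this degeneracy is excluded by hypothesis.
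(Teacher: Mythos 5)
Your proof is correct, and it follows the same overall decomposition as the paper: both arguments first unwind $\mathcal{P}_n$ to reduce the claim to representing the $\R^n$-valued random vector $(X(t),\vec{\xi}_n)_{\Hi}'$ as $\int_0^t\gamma(t,s)\,d\vec{B}(s)$. The difference lies in how that representation is obtained. The paper introduces the operator $\mathcal{T}_n\in L(\Hi,\R^n)$, $f\mapsto((f,\xi_1)_{\Hi},\ldots,(f,\xi_n)_{\Hi})'$, computes its adjoint, and then simply cites Thm.~2.1 of Benth and Kr\"uhner~\cite{BK-HJM} for the existence of $\vec{B}$ and $\gamma$ with $\gamma^2=\mathcal{T}_n\Gamma(t,s)(\sigma(s))\mathcal{Q}\Gamma(t,s)(\sigma(s))^*\mathcal{T}_n^*=C(t,s)$; the only work done in the paper is identifying this operator with the matrix $C$ and checking symmetry and definiteness. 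You instead reprove the representation from scratch: you commute the coordinate functionals through the stochastic integral (which is legitimate here and is essentially the paper's own Prop.~3.2 applied with $\Hi$ replaced by $\R$), form the continuous $\R^n$-valued martingale $\vec{M}$, compute $[\vec{M}]_s=\int_0^sC(t,r)\,dr$, set $\vec{B}=\int\gamma(t,\cdot)^{-1}\,d\vec{M}$ and invoke L\'evy's characterization. What your route buys is self-containedness and an explicit construction of $\vec{B}$; what the citation buys is brevity. One point in your favour: the paper's own verification only shows $\vec{x}'C(t,s)\vec{x}=|\mathcal{Q}^{1/2}\Gamma(t,s)(\sigma(s))^*h|_{\V}^2\geq 0$, i.e.\ positive \emph{semi}-definiteness, even though the statement asserts positive definiteness; the invertibility of $\gamma(t,s)$ that your construction (and the cited theorem) needs is therefore a genuine extra hypothesis, and your closing remark that a degenerate $C$ would require enlarging the probability space with an independent Brownian motion is exactly the correct repair and is more careful than the paper on this point.
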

\begin{proof}
By definition, we have
$$
\mathcal{P}_n(X(t))=(X(t),\vec{\xi}_n)'_{\Hi}H_n^{-1}\vec{\xi}_n\,, 
$$
which can be written as 
$$
\mathcal{P}_n(X(t))=\mathcal{T}_n(X(t))'H_n^{-1}\vec{\xi}_n\,,
$$
for the operator $\mathcal{T}_n\in L(\Hi,\R^n)$ defined by 
$$
\mathcal{T}_n(f)=((f,h_1)_{\Hi},\ldots,(f,h_n)_{\Hi})'\,.
$$
Note that for any $\vec{x}\in\R^n$, we have
$$
\mathcal{T}_n(f)'\vec{x}=(f,\mathcal{T}_n^*(\vec{x}))_{\Hi}\,,
$$
and therefore $\mathcal{T}_n^*\in L(\R^n,\Hi)$ is
$$
\mathcal{T}_n^*(\vec{x})=\vec{x}'\vec{\xi}_n\,.
$$
From Thm.~2.1 in Benth and Kr\"uhner~\cite{BK-HJM}, we obtain the existence of an $n$-dimensional Brownian motion
$\vec{B}$ such that
$$
\mathcal{T}_n(X(t))=\int_0^t\gamma(t,s)\,d\vec{B}(s)\,,
$$
for $\gamma(t,s)\in\R^{n\times n}$ where
$$
\gamma(t,s)^2=\mathcal{T}_n\Gamma(t,s)(\sigma(s))\mathcal{Q}\Gamma(t,s)(\sigma(s))^*\mathcal{T}^*_n\,.
$$
But by definition of the involved operators
$$
\mathcal{T}_n\Gamma(t,s)(\sigma(s))\mathcal{Q}\Gamma(t,s)(\sigma(s))^*\mathcal{T}^*_n(\vec{x})=C(s,t)\vec{x}\,.
$$
The matrix $C(t,s)$ is obviously symmetric by definition. Since, for any $\vec{x}\in\R^n$, we find
$$
\vec{x}^{\text{T}}C(s,t)\vec{x}=|\mathcal{Q}^{1/2}\Gamma(s,t)(\sigma(s))^*h|_{\V}^2\geq 0\,,
$$
with $h:=\sum_{i=1}^nx_ih_i\in\Hi$, positive definiteness of $C(s,t)$ follows. Thus, $C$ has a square-root and the proof is complete.
\end{proof}
In a practical situation one aims at choosing $\xi_i$ such that the elements in $C$ are easy to compute. 
We note that $\vec{\beta}_n:=H_{n}^{-1/2}\vec{\xi}_n$ is an $n$-dimensional vector of orthonormal 
basis elements of $\Hi_n$. 

\section{Hambit fields and hyperbolic SPDEs}

By a simple change of variables, one may view an Hambit field as the solution of a linear hyperbolic SPDE evaluated at the boundary.  In the present Section we analyse this connection in further detail.

To this end, let
$\widetilde{\Hi}$ be a separable Hilbert space of strongly measurable $\Hi$-valued functions on $\R_+$, and denote by $\mathcal{S}_{\xi}$ for $\xi\geq 0$ the
right-shift operator defined by $\mathcal{S}_{\xi}f=f(\xi+\cdot)$ for $f\in\widetilde{\V}$. We assume that $\{\mathcal{S}_{\xi}\}_{\xi\geq 0}$ is a
$C_0$-semigroup on $\widetilde{\Hi}$. The generator of $\mathcal{S}_{\xi}$ is seen to be $\partial_{\xi}=\partial/\partial\xi$, being a densely defined unbounded operator on $\widetilde{\Hi}$. 

Consider the SPDE
\begin{equation}
\label{abstract-ambit-spde}
dY(t)=\partial_{\xi}Y(t)\,dt+\Gamma(t+\cdot,t)(\sigma(t))\,dL(t)\,,
\end{equation}
with initial value $Y(0)\in\widetilde{\Hi}$. We suppose that for every $f\in\Hi$ and $t\in\R_+$, the mapping
\begin{equation}
\R_+\mapsto\Hi:\quad \xi\mapsto\Gamma(t+\xi,t)(\sigma(t))f\,,
\end{equation}
is an element of $\widetilde{\Hi}$ and that $\Gamma(t+\cdot,t)(\sigma(t))\in L(\V,\widetilde{\Hi})$.
Furthermore, we suppose that 
\begin{equation}
\E\left[\int_0^t\|\Gamma(s+\cdot,s)(\sigma(s))\mathcal{Q}^{1/2}\|^2_{\text{HS}}\,ds\right]<\infty
\end{equation}
which makes the stochastic integral term in \eqref{abstract-ambit-spde} well-defined. Remark that the Hilbert-Schmidt norm is with respect to linear operators from $\V$ to $\widetilde{\Hi}$ and that predictability of the integrand is ensured from the definition of a Hambit field.  

Assume the additional integrability condition on the noise term of the SPDE in \eqref{abstract-ambit-spde} , 
\begin{equation}
\E\left[\int_0^t\|\Gamma(t+\cdot,s)(\sigma(s))\mathcal{Q}^{1/2}\|^2_{\text{HS}}\,ds\right]<\infty\,.
\end{equation}
Then, by Peszat and Zabczyk~\cite[Ch. 9]{PZ} there exists a unique mild solution of 
\eqref{abstract-ambit-spde} given by
the predictable $\widetilde{\Hi}$-valued stochastic process $Y(t)$,
\begin{align}
\label{spde-mild-solution}
Y(t)&=\mathcal{S}_{t}Y(0)+\int_0^t\mathcal{S}_{t-s}\Gamma(s+\cdot,s)(\sigma(s))\,dL(s)\nonumber \\
&=\mathcal{S}_{t}Y(0)+\int_0^t\Gamma(t+\cdot,s)(\sigma(s))\,dL(s)\,.
\end{align}
We have the following result, which can be used to link $Y$ to the Hambit field $X$. 
\begin{proposition}
\label{thm:commute-L}
Suppose $\{\Phi(s)\}_{s\in\mathbb{R}_+}$ is a predictable process with values in $L(\V,\widetilde{\Hi})$ being $L$-integrable.
If $\Le\in L(\widetilde{\Hi},\Hi)$, then
$$
\Le\int_0^t\Phi(s)\,dL(s)=\int_0^t\Le\Phi(s)\,dL(s)\,.
$$
\end{proposition}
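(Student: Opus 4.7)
The plan is to prove the identity by the standard two-step density argument: verify it on simple (elementary) predictable processes where it reduces to linearity of $\mathcal{L}$, then extend by isometry using the boundedness of $\mathcal{L}$.

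First I would check that the right-hand side is well-defined. Since $\mathcal{L} \in L(\widetilde{\mathcal{H}}, \mathcal{H})$ is bounded, for any ONB $\{v_m\}$ of $\mathcal{V}$ one has
\begin{equation*}
\|\mathcal{L}\Phi(s)\mathcal{Q}^{1/2}\|_{\text{HS}}^2 = \sum_m |\mathcal{L}\Phi(s)\mathcal{Q}^{1/2}v_m|_{\mathcal{H}}^2 \leq \|\mathcal{L}\|_{\text{op}}^2\|\Phi(s)\mathcal{Q}^{1/2}\|_{\text{HS}}^2,
\end{equation*}
so $L$-integrability of $\Phi$ transfers to $\mathcal{L}\Phi$, and moreover predictability is preserved since $\mathcal{L}$ is continuous. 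Hence $\int_0^t \mathcal{L}\Phi(s)\,dL(s)$ exists as an $\mathcal{H}$-valued stochastic integral.

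Second, for a simple predictable process $\Phi(s) = \sum_{i=0}^{n-1} \Phi_i\, 1_{(t_i,t_{i+1}]}(s)$ with $\Phi_i$ being $\mathcal{F}_{t_i}$-measurable $L(\mathcal{V},\widetilde{\mathcal{H}})$-valued random variables, the stochastic integral is by definition
\begin{equation*}
\int_0^t \Phi(s)\,dL(s) = \sum_{i=0}^{n-1}\Phi_i(L(t_{i+1}) - L(t_i)) \in \widetilde{\mathcal{H}}.
\end{equation*}
Applying $\mathcal{L}$ and using its linearity yields $\sum_i \mathcal{L}\Phi_i(L(t_{i+1})-L(t_i))$, which is precisely the stochastic integral $\int_0^t \mathcal{L}\Phi(s)\,dL(s)$. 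So the identity holds on simple processes.

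Third, I would approximate a general predictable $L$-integrable $\Phi$ by a sequence of simple predictable processes $\Phi_n$ in the norm $\Phi \mapsto \bigl(\mathbb{E}\int_0^t \|\Phi(s)\mathcal{Q}^{1/2}\|_{\text{HS}}^2\,ds\bigr)^{1/2}$; such density is exactly the construction underlying the stochastic integral in Peszat--Zabczyk~\cite[Sect.~8.6]{PZ}. By the isometry~\eqref{isometry},
\begin{equation*}
\mathbb{E}\Bigl[\bigl|\textstyle\int_0^t \Phi_n(s)\,dL(s) - \int_0^t\Phi(s)\,dL(s)\bigr|_{\widetilde{\mathcal{H}}}^2\Bigr] \to 0,
\end{equation*}
and by continuity of $\mathcal{L}$ the left-hand sides $\mathcal{L}\int_0^t\Phi_n(s)\,dL(s)$ converge in $L^2(\Omega;\mathcal{H})$ to $\mathcal{L}\int_0^t\Phi(s)\,dL(s)$. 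Using the bound $\|\mathcal{L}\Phi_n\mathcal{Q}^{1/2} - \mathcal{L}\Phi\mathcal{Q}^{1/2}\|_{\text{HS}} \leq \|\mathcal{L}\|_{\text{op}}\|\Phi_n\mathcal{Q}^{1/2}-\Phi\mathcal{Q}^{1/2}\|_{\text{HS}}$ together with the isometry applied on $\mathcal{H}$, the right-hand sides $\int_0^t\mathcal{L}\Phi_n(s)\,dL(s)$ likewise converge in $L^2(\Omega;\mathcal{H})$ to $\int_0^t\mathcal{L}\Phi(s)\,dL(s)$. Since the two sides agree for every $n$, passing to the limit completes the proof.

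I do not expect any serious obstacle here; the only thing that requires some care is ensuring that the approximating sequence $\Phi_n$ chosen for the left-hand side also serves the right-hand side, which is automatic because $\mathcal{L}$ is bounded and hence the induced map on predictable $L^2$-processes is continuous.
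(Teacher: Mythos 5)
Your proposal is correct and follows essentially the same route as the paper's own proof: verification on simple predictable processes via linearity of $\Le$, transfer of $L$-integrability through the bound $\|\Le\Phi(s)\mathcal{Q}^{1/2}\|_{\text{HS}}\leq\|\Le\|_{\text{op}}\|\Phi(s)\mathcal{Q}^{1/2}\|_{\text{HS}}$, and passage to the limit using the isometry and the continuity of $\Le$ on $L^2(\Omega;\widetilde{\Hi})$. No gaps.
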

\begin{proof}
Note first that $\int_0^t\Phi(s)\,dL(s)$ takes values in $\widetilde{\Hi}$, while $\int_0^t\Le\Phi(s)\,dL(s)$ takes values in
$\Hi$ since $\Le\Phi(s)\in L(\V,\Hi)$. Moreover, since $\Le$ is a bounded operator,
$$
|\Le\Phi(s)v|_{\Hi}^2\leq \|\Le\|_{\text{op}}^2|\Phi(s)v|_{\widetilde{\Hi}}^2\,,
$$  
and
$$
\E\left[\int_0^t\|\Le\Phi(s)\mathcal{Q}^{1/2}\|_{\text{HS}}^2\,ds\right]\leq \|\Le\|_{\text{op}}^2\E\left[\int_0^t\|\Phi(s)\mathcal{Q}^{1/2}\|_{\text{HS}}^2\,ds\right]<\infty\,,
$$
by the integrability assumption on $\Phi$. Thus, $s\mapsto \Le\Phi(s)$ is $L$-integrable. 

Let $\widetilde{\Phi}$ be a simple process in $L(\V,\widetilde{\Hi})$, e.g.,
$$
\widetilde{\Phi}(s)=\sum_{k=1}^n\widetilde{\Phi}_k\mathrm{1}(s_{k-1}\leq s<s_k)\,.
$$
Then
$$
\Le\widetilde{\Phi}(s)=\sum_{k=1}^n\Le\widetilde{\Phi}_k\mathrm{1}(s_{k-1}\leq s<s_k)\,,
$$
is a simple process in $L(\V,\Hi)$, and
\begin{align*}
\Le\int_0^t\widetilde{\Phi}(s)\,dL(s)&=\Le\sum_{k=1}^n\widetilde{\Phi}_k(\Delta L(s_k))=\sum_{k=1}^n\Le\widetilde{\Phi}_k(\Delta L(s_k))=\int_0^t\Le\widetilde{\Phi}(s)\,dL(s)\,.
\end{align*}
Thus, the proposition holds for simple processes.

Let $\{\widetilde{\Phi}_n\}_{n\in\N}$ be a sequence of simple processes such that 
$$
\E\left[\int_0^t\|(\widetilde{\Phi}_n(s)-\Phi(s))\mathcal{Q}^{1/2}\|_{\text{HS}}^2\,ds\right]\rightarrow 0
$$
when $n\rightarrow\infty$. 
It holds,
\begin{align*}
\E\left[\int_0^t\|(\Le\widetilde{\Phi}_n(s)-\Le\Phi(s))\mathcal{Q}^{1/2}\|_{\text{HS}}^2\,ds\right]&\leq
\|\Le\|_{\text{op}}^2\E\left[\int_0^t\|(\widetilde{\Phi}_n(s)-\Phi(s))\mathcal{Q}^{1/2}\|_{\text{HS}}^2\,ds\right]
\end{align*}
and therefore $\{\Le\widetilde{\Phi}_n\}_{n\in\N}$ is a sequence of simple processes approximating $\Le\Phi$. 
Hence, by definition of stochastic integration, we find
$$
\int_0^t\Le\Phi(s)\,dL(s)=\lim_{n\rightarrow\infty}\int_0^t\Le\widetilde{\Phi}_n(s)\,dL(s)\,.
$$ 
As $\Le$ is a linear bounded operator, we find for any sequence $\{X_n\}_{n\in\N}$ of square integrable random variables
in $\widetilde{\Hi}$ such that $\E[|X_n-X|_{\widetilde{\Hi}}^2]\rightarrow 0$ for $X\in\widetilde{\Hi}$ when $n\rightarrow\infty$, that 
$$
\lim_{n\rightarrow\infty}\E[|\Le(X_n-X)|_{\Hi}^2]\leq\|\Le\|_{\text{op}}^2\lim_{n\rightarrow\infty}\E[|X_n-X|_{\widetilde{\Hi}}^2]=0\,.
$$
Therefore $\Le X_n$ converges to $\Le X$ in $L^2(\Omega;\Hi)$. Since, by definition,
$$
\int_0^t\Phi(s)\,dL(s)=\lim_{n\rightarrow\infty}\int_0^t\widetilde{\Phi}_n(s)\,dL(s)\,,
$$
it follows that
\begin{align*}
\Le\int_0^t\Phi(s)\,dL(s)&=\Le\lim_{n\rightarrow\infty}\int_0^t\widetilde{\Phi}_n(s)\,dL(s)
=\lim_{n\rightarrow\infty}\Le\int_0^t\widetilde{\Phi}_n(s)\,dL(s) \\
&=\lim_{n\rightarrow\infty}\int_0^t\Le\widetilde{\Phi}_n(s)\,dL(s)
=\int_0^t\Le\Phi(s)\,dL(s).
\end{align*}
The proposition follows.
\end{proof}
As a corollary, we obtain the following result:
\begin{corollary}
Assume that the evaluation map $\delta_x:\widetilde{\Hi}\rightarrow\Hi$ for $x\geq 0$ defined by $\delta_xf=f(x)$ is a
bounded linear operator, i.e., $\delta_x\in L(\widetilde{\Hi},\Hi)$ for any $x\geq 0$. If $Y(0)=0$, then, $X(t)=\delta_0Y(t)$ for $Y$ in \eqref{spde-mild-solution}. 
\end{corollary}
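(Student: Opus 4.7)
The plan is to start from the mild solution formula \eqref{spde-mild-solution}, specialize to $Y(0)=0$, and then apply Proposition~\ref{thm:commute-L} with $\Le=\delta_0$ to push the evaluation map inside the stochastic integral. With $Y(0)=0$ the mild solution reduces to
$$
Y(t)=\int_0^t\Gamma(t+\cdot,s)(\sigma(s))\,dL(s)\,,
$$
which is a $\widetilde{\Hi}$-valued stochastic integral against $L$. The integrand $\Phi(s):=\Gamma(s+\cdot,s)(\sigma(s))$ takes values in $L(\V,\widetilde{\Hi})$ by the standing assumption, is predictable (since $\sigma$ is), and is $L$-integrable because of the integrability hypothesis placed on the noise term of \eqref{abstract-ambit-spde}; thus the hypotheses of Proposition~\ref{thm:commute-L} are met.

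Next I would invoke Proposition~\ref{thm:commute-L} with $\Le=\delta_0$, which is in $L(\widetilde{\Hi},\Hi)$ by the assumption of the corollary. Commuting $\delta_0$ with the stochastic integral yields
$$
\delta_0 Y(t)=\int_0^t\delta_0\bigl(\Gamma(t+\cdot,s)(\sigma(s))\bigr)\,dL(s)\,.
$$
The final step is to identify the integrand: by definition of $\delta_0$ and the shift, $\delta_0(\Gamma(t+\cdot,s)(\sigma(s)))=\Gamma(t,s)(\sigma(s))$, so the right-hand side becomes $\int_0^t\Gamma(t,s)(\sigma(s))\,dL(s)=X(t)$.

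There is essentially no obstacle; the only thing to be careful about is that the hypotheses of Proposition~\ref{thm:commute-L} really are in force. In particular, I should remark that the integrand has been assumed to live in $L(\V,\widetilde{\Hi})$ pointwise with the appropriate Hilbert--Schmidt integrability, so the commutation is legitimate, and the pointwise evaluation at $\xi=0$ of the shift $s\mapsto \Gamma(t+\xi,s)(\sigma(s))$ recovers $\Gamma(t,s)(\sigma(s))$, completing the identification with the Hambit field $X(t)$.
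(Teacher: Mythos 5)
Your proof is correct and follows exactly the paper's route: specialize the mild solution \eqref{spde-mild-solution} to $Y(0)=0$ and apply Proposition~\ref{thm:commute-L} with $\Le=\delta_0$, then identify $\delta_0\bigl(\Gamma(t+\cdot,s)(\sigma(s))\bigr)=\Gamma(t,s)(\sigma(s))$. The paper's own proof is a one-line version of the same argument; your additional verification that the integrand is predictable, $L(\V,\widetilde{\Hi})$-valued and $L$-integrable is a welcome (and harmless) elaboration.
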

\begin{proof}
By Prop.~\ref{thm:commute-L} it follows,
$$
\delta_0\int_0^t\Gamma(t+\cdot,s)(\sigma(s))\,dL(s)=\int_0^t\Gamma(t,s)(\sigma(s))\,dL(s)\,,
$$
e.g., that the evaluation map commutes with the stochastic integral. 
\end{proof}
In the following Section 4 we will develop an iterative scheme for $X$ based on finite difference
approximation of the solution $Y$ of the SPDE \eqref{abstract-ambit-spde}. 

We next construct an explicit example of a space $\widetilde{\Hi}$.

\subsection{An example of $\widetilde{\Hi}$}   

We define the Filipovic space for Hilbert space-valued functions on $\R_+$. Our extension follows essentially the steps
by Filipovic~\cite{F} and rests on fundamental properties of so-called vector-valued
functions. 

Given a separable Hilbert space $\Hi$ with norm $|\cdot|_{\Hi}$ induced by the inner product denoted
$(\cdot,\cdot)_{\Hi}$. Let us recall some basic facts of vector-valued functions 
 that we shall need (see Hunter~\cite{Hunter}). 
First, a function $f:\R_+\rightarrow\Hi$ is {\it Bochner integrable}
if and only if it is {\it weakly measurable} and
$$
\int_{\R_+}|f(x)|_{\Hi}\,dx<\infty\,.
$$
Weak measurability means that $x\mapsto(f(x),g)_{\Hi}:\R_+\rightarrow\R$ is measurable for
every $g\in\Hi$. We remark that since $\Hi$ is a separable Hilbert space, weak measurability is equivalent to
strong measurability (see Hunter~\cite{Hunter}, Thm. 6.16, page 197)\footnote{{\it Strongly measurable} 
means that $f$ can be approximated by simple functions, that is,  
$f_n=\sum_{j=1}^nc_j\mathrm{1}_{E_j}$, where $\{E_j\}_{j\in\mathbb{N}}\subset\mathcal{B}_{\R_+}$ and $\{c_j\}_{j\in\mathbb{N}}\subset\Hi$, such that $|f(x)-f_n(x)|_{\Hi}\rightarrow 0$, a.e. for $x\in\R_+$ when $n\rightarrow\infty$.}. 
Let $L^1_{\text{loc}}(\R_+;\Hi)$ be the space of locally Bochner integrable functions $f:\R_+\rightarrow\Hi$. 
According to Def.~6.31, page 201 in Hunter~\cite{Hunter}, a function $f\in L_{\text{loc}}^1(\R_+;\Hi)$ 
is said to be {\it weakly differentiable} if there exists $f'\in L^1_{\text{loc}}(\R_+;\Hi)$ 
such that 
$$
\int_{\R_+}f(x)\phi'(x)\,dx=-\int_{\R_+}f'(x)\phi(x)\,dx
$$ 
for all $\phi\in C_c^{\infty}(\R_+)$. The integrals above are understood in the Bochner sense.

We are now ready to define a space of $\Hi$-valued "smooth" functions.
\begin{definition}
For a non-decreasing function $w\in C^1(\R_+)$ with $w(0)=1$, define
$$
\Hi_w=\left\{f\in L^1_{\text{loc}}(\R_+;\Hi)\,|\, \text{ there exists } f'\in L^1_{\text{loc}}(\R_+;\Hi) \text{ such that } 
\|f\|_w<\infty\right\}\,,
$$
where
$$
\|f\|_w^2=|f(0)|_{\Hi}^2+\int_0^{\infty}w(x)|f'(x)|^2_{\Hi}\,dx\,.
$$
\end{definition} 
Denote by $\langle\cdot,\cdot\rangle_w$ the inner product
$$
\langle f,g\rangle_w=(f(0),g(0))_{\Hi}+\int_0^{\infty}w(x)(f'(x),g'(x))_{\Hi}\,dx\,,
$$
for $f,g\in\Hi_w$, and observe that $\|f\|_w^2=\langle f,f\rangle_w$. 
\begin{proposition}
\label{prop:H-sep-hilbert}
$(\Hi_w, \|\cdot\|_w)$ is a separable Hilbert space.
\end{proposition}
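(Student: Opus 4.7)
My plan is to identify $\Hi_w$ isometrically with the Hilbert space direct sum $\Hi\oplus L^2_w(\R_+;\Hi)$, where $L^2_w(\R_+;\Hi)$ denotes the space of (equivalence classes of) strongly measurable $g:\R_+\to\Hi$ with $\int_0^\infty w(x)|g(x)|_\Hi^2\,dx<\infty$, equipped with the obvious weighted inner product. Since $\Hi$ is separable, standard vector-valued integration theory (Hunter~\cite{Hunter}) gives that $L^2_w(\R_+;\Hi)$ is a separable Hilbert space, and hence so is $\Hi\oplus L^2_w(\R_+;\Hi)$; transporting this structure back via the identification will yield the proposition.

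The first step is to verify that $\langle\cdot,\cdot\rangle_w$ is an inner product on $\Hi_w$. Bilinearity and symmetry are immediate. For positive-definiteness, if $\|f\|_w=0$ then $f(0)=0$ and $f'=0$ a.e.; applying the vector-valued fundamental theorem of calculus (i.e.\ $f(x)-f(0)=\int_0^x f'(y)\,dy$ as a Bochner integral, valid whenever $f'\in L^1_{\text{loc}}(\R_+;\Hi)$ is a weak derivative), this forces $f\equiv 0$ a.e.

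The second and crucial step is to exhibit the isometry
\[
\iota:\Hi_w\longrightarrow \Hi\oplus L^2_w(\R_+;\Hi),\qquad \iota(f):=(f(0),f'),
\]
and show it is surjective. That $\iota$ is isometric is built into the definition of $\|\cdot\|_w$. For surjectivity, given $(h,g)\in\Hi\oplus L^2_w(\R_+;\Hi)$, define
\[
f(x):=h+\int_0^x g(y)\,dy,\qquad x\geq 0,
\]
the inner integral being Bochner. The hypothesis $w\geq w(0)=1$ combined with Cauchy--Schwarz gives $\int_0^x|g(y)|_\Hi\,dy\leq x^{1/2}\bigl(\int_0^x w(y)|g(y)|_\Hi^2\,dy\bigr)^{1/2}<\infty$, so $f$ is well-defined and locally Bochner integrable. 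Using Fubini together with the defining identity of weak differentiation applied componentwise against test functions $\phi\in C_c^\infty(\R_+)$, one checks that $g$ is the weak derivative of $f$ in the sense of Def.~6.31 of Hunter~\cite{Hunter}, and that $f(0)=h$ in $\Hi$. Thus $f\in\Hi_w$ and $\iota(f)=(h,g)$.

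Once $\iota$ is an isometric bijection, completeness and separability of $\Hi_w$ follow from the corresponding properties of $\Hi\oplus L^2_w(\R_+;\Hi)$: separability of $L^2_w(\R_+;\Hi)$ comes from separability of $\Hi$ and $\sigma$-finiteness of $w(x)\,dx$, while its completeness is standard for $L^2$-spaces of Hilbert-valued functions. The main obstacle is the second step, specifically verifying carefully the vector-valued fundamental theorem of calculus and the identification of $g$ as the \emph{weak} derivative of $f$ in the precise sense required; everything else is bookkeeping once the isometry is in place.
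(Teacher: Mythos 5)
Your proof is correct and follows essentially the same route as the paper: both identify $\Hi_w$ isometrically with a product of $\Hi$ and an $L^2$-type space of $\Hi$-valued derivatives via the map $f\mapsto(f(0),f')$, invert it through the vector-valued fundamental theorem of calculus, and transport completeness and separability back. The only (immaterial) difference is that you keep the weight in the target space $L^2_w(\R_+;\Hi)$, whereas the paper absorbs it into the map by sending $f$ to $(f(0),f'\sqrt{w})\in\Hi\times L^2(\R_+;\Hi)$; multiplication by $\sqrt{w}$ is an isometric isomorphism between the two targets.
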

\begin{proof}
The proof adapts the arguments of Thm. 5.1.1 in Filipovic~\cite{F} to Hilbert-valued functions. 
We include the details here for the convenience of the reader. 

Observe that $L^2(\R_+;\Hi)$ is a Hilbert space, and so is $\Hi\times L^2(\R_+;\Hi)$ with norm 
$\|\cdot\|_*^2:=|\cdot|_{\Hi}^2+\|\cdot\|^2_{L^2(\R_+;\Hi)}$. Define the linear operator 
$T:\Hi_w\rightarrow\Hi\times L^2(\R_+;\Hi)$ by
\begin{equation}
\label{def-T-operator}
Tf=(f(0),f'\sqrt{w})\,.
\end{equation}
$T$ is isometric, since
\begin{align*}
\|Tf\|_*^2&=|f(0)|^2_{\Hi}+\int_0^{\infty}|\sqrt{w(x)}f'(x)|^2_{\Hi}\,dx=\|f\|_w^2\,.
\end{align*}
We claim that its inverse is the operator $S:\Hi\times L^2(\R_+;\Hi)\rightarrow\Hi_w$ defined as
$$
S(u,h)(x):=u+\int_0^xh(y)w^{-1/2}(y)\,dy\,.
$$
First, since $h\in L^2(\R_+;\Hi)$ and $w^{-1/2}(y)\leq 1$ due to $w$ being non-decreasing and $w(0)=1$, we find that the integral is well-defined in the Bochner sense. It holds,
$$
T(S(u,h))=(S(u,h)(0),S(u,h)'\sqrt{w})=(u,hw^{-1/2}w^{1/2})=(u,h)\,,
$$
where we have applied Thms. 6.32 (page 201) and 6.35 (page 203) in Hunter~\cite{Hunter}. Furthermore,
$$
S(Tf)(x)=S(f(0),f'\sqrt{w})(x)=f(0)+\int_0^{x}f'(y)\sqrt{w}(y)w^{-1/2}(y)\,dy=f(0)+\int_0^{x}f'(y)\,dy=f(x)\,,
$$
where we used Thm 6.35 (page 203) in Hunter~\cite{Hunter} in the last equality. 
Hence, $S=T^{-1}$, and $\Hi_w$ is isomorphic to the Hilbert
space $\Hi\times L^2(\R_+;\Hi)$ implying that $\Hi_w$ is a complete inner product space, i.e., a Hilbert space. 

$\Hi$ is assumed separable, which means that for any $f\in L^2(\R_+;\Hi)$
$$
f(x)=\sum_{k=1}^{\infty}\langle f(x),e_k\rangle_{\Hi} e_k
$$
for $a.e.\, x\in\R_+$ for the ONB $\{e_k\}_{k\in\mathbb{N}}\subset\Hi$. We have that 
$f_k:=\langle f(\cdot),e_k\rangle_{\Hi}\in L^2(\R_+;\R)$ since by Schwartz' inequality
$$
\int_0^{\infty}|f_k(x)|^2\,dx=\int_0^{\infty}|\langle f(x),e_k\rangle_{\Hi}|^2\,dx\leq\int_0^{\infty}|f(x)|_{\Hi}^2\,dx|e_k|_{\Hi}^2
=\int_0^{\infty}|f(x)|_{\Hi}^2\,dx<\infty\,.
$$
But since $L^2(\R_+;\R)$ is separable, we find for an ONB $\{h_n\}_{n\in\mathbb{N}}\subset L^2(\R_+;\R)$
$$
f_k(x)=\sum_{n=1}^{\infty}(f_k,h_n)_{L^2}h_n(x)\,.
$$ 
But then $\{h_n\otimes e_k\}_{n,k\in\mathbb{N}}$ is an ONB of 
$L^2(\R_+;\Hi)$. This shows that $L^2(\R_+;\Hi)$ is separable, and hence $\Hi\times L^2(\R_+;\Hi)$ is
separable as well. By the isomorphism $T$, we can therefore conclude the separability of $\Hi_w$. 
The proof is complete.
\end{proof}
The next Lemma provides us with a fundamental theorem of calculus on $\Hi_w$:
\begin{lemma}
\label{lemma:Hw_FTC}
Assume $w^{-1}\in L^1(\R_+)$. Then for any $f\in\Hi_w$, $f'\in L^1(\R_+;\Hi)$,
$\|f'\|_{L^1(\R_+;\Hi)}\leq c\|f\|_w$, and 
$$
f(x+t)-f(x)=\int_x^{x+t}f'(y)\,dy\,,
$$
for every $x\in\R_+$ and $t\geq 0$. The constant $c$ is given by $c^2:=\int_0^{\infty}w^{-1}(x)\,dx$.  
\end{lemma}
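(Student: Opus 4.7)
The plan is to mimic Filipović's proof (Thm.~5.1.1 in \cite{F}) for real-valued functions, but with the scalar integrals replaced by Bochner integrals of $\Hi$-valued functions, leveraging the isomorphism $T$ constructed in Proposition~\ref{prop:H-sep-hilbert}.

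First I would establish the $L^1$-bound. Since $w$ is non-decreasing with $w(0)=1$, we have $w^{-1/2}\in L^2(\R_+)$ with $\|w^{-1/2}\|_{L^2}^2 = c^2 < \infty$ by hypothesis. Writing $|f'(y)|_{\Hi} = w^{-1/2}(y)\cdot w^{1/2}(y)|f'(y)|_{\Hi}$ and applying the (scalar) Cauchy--Schwarz inequality gives
\begin{equation*}
\int_0^{\infty}|f'(y)|_{\Hi}\,dy \leq \left(\int_0^{\infty}w^{-1}(y)\,dy\right)^{1/2}\left(\int_0^{\infty}w(y)|f'(y)|_{\Hi}^2\,dy\right)^{1/2} \leq c\|f\|_w\,,
\end{equation*}
since the second factor on the right is bounded by $\|f\|_w$ (the missing $|f(0)|_{\Hi}^2$ term being non-negative). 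This proves both $f'\in L^1(\R_+;\Hi)$ and the announced norm bound.

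Next, I would recover the pointwise integral representation of $f$ via the isomorphism from Proposition~\ref{prop:H-sep-hilbert}. The operator $T$ defined in \eqref{def-T-operator} satisfies $T^{-1} = S$ with $S(u,h)(x) = u + \int_0^x h(y)w^{-1/2}(y)\,dy$. Applying $S$ to $Tf = (f(0), f'\sqrt{w})$ and invoking Thms.~6.32 and~6.35 of Hunter~\cite{Hunter} (already used in the proof of Proposition~\ref{prop:H-sep-hilbert}) yields, for every $x\in\R_+$,
\begin{equation*}
f(x) = f(0) + \int_0^{x}f'(y)w^{-1/2}(y)w^{1/2}(y)\,dy = f(0) + \int_0^{x}f'(y)\,dy\,,
\end{equation*}
where the Bochner integral is well-defined by the first step.

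Finally, for any $x\in\R_+$ and $t\geq 0$, the additivity of the Bochner integral on disjoint intervals gives
\begin{equation*}
f(x+t) - f(x) = \left(f(0)+\int_0^{x+t}f'(y)\,dy\right) - \left(f(0)+\int_0^{x}f'(y)\,dy\right) = \int_{x}^{x+t}f'(y)\,dy\,,
\end{equation*}
which is the desired formula. The only delicate point is that $\Hi_w$ consists of equivalence classes, so strictly the identity holds for the (unique) continuous representative $S(Tf)$; but since the proof of Proposition~\ref{prop:H-sep-hilbert} already singled out this representative when defining $T$ (through the evaluation $f(0)$), no ambiguity arises. I do not anticipate any genuine obstacle beyond a careful appeal to the vector-valued versions of the fundamental theorem of calculus and the Cauchy--Schwarz inequality.
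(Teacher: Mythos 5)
Your proposal is correct and follows essentially the same route as the paper: the $L^1$-bound is obtained by the identical weighted Cauchy--Schwarz argument, and the integral identity then follows from the vector-valued fundamental theorem of calculus (Hunter, Thm.~6.35), which is exactly the citation the paper makes. Your detour through the isomorphism $S=T^{-1}$ is a harmless elaboration of the same point.
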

\begin{proof}
For $f\in\Hi_w$, we find by the Cauchy-Schwartz inequality,
\begin{align*}
\int_0^{\infty}|f'(x)|_{\Hi}\,dx&=\int_0^{\infty}w^{-1/2}(x)w^{1/2}(x)|f'(x)|_{\Hi}\,dx \\
&\leq (\int_0^{\infty}w^{-1}(x)\,dx)^{1/2}(\int_0^{\infty}w(x)|f'(x)|_{\Hi}^2\,dx)^{1/2} \\
&\leq(\int_0^{\infty}w^{-1}(x)\,dx)^{1/2}\|f\|_w<\infty\,.
\end{align*}
Hence, $f'\in L^1(\R_+;\Hi)$ and the norm estimate follows. But then, by Thm. 6.35 (page 203) in 
Hunter~\cite{Hunter} yields the fundamental theorem of calculus.
\end{proof}
This result also tells us that any element of $\Hi_w$ is absolutely 
continuous, and, in particular, continuous.
 
Introduce now the shift semigroup  $(\mathcal{S}_t)_{t\geq 0}$ on $\Hi_w$ defined as
\begin{equation}
\mathcal{S}_tf:=f(\cdot+t)\,.
\end{equation}
The next Lemma shows uniform boundedness of $\mathcal{S}_t$ on $\Hi_w$. 
\begin{lemma}
\label{lemma:shift-uniformly-bounded}
Suppose that $w^{-1}\in L^1(\R_+)$, Then $\mathcal{S}_t$ is uniformly bounded with
$\|\mathcal{S}_t\|_{\text{op}}\leq\sqrt{2(1+c^2)}$. Here, $c$ is the positive constant
defined in Lemma~\ref{lemma:Hw_FTC}. 
\end{lemma}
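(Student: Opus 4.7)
The plan is to directly estimate $\|\mathcal{S}_t f\|_w^2$ for an arbitrary $f\in\Hi_w$ by splitting the norm into the point evaluation at zero, which after shift becomes $|f(t)|_\Hi^2$, and the weighted integral of $|f'|_\Hi^2$. For the integral term, I would exploit the monotonicity of $w$: since $w$ is non-decreasing, $w(x)\leq w(x+t)$ for all $x\geq 0$, and after the substitution $y=x+t$ the integral becomes $\int_t^\infty w(y)|f'(y)|_\Hi^2\,dy$, which is at most $\|f\|_w^2 - |f(0)|_\Hi^2$.

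For the pointwise term $|f(t)|_\Hi^2$, the natural tool is the fundamental theorem of calculus provided by Lemma~\ref{lemma:Hw_FTC}, which gives $f(t)=f(0)+\int_0^t f'(y)\,dy$ as a Bochner integral. Applying the triangle inequality for the Bochner integral followed by a Cauchy-Schwarz estimate with the weight $w^{-1/2}\cdot w^{1/2}$, I obtain
\begin{equation*}
|f(t)-f(0)|_\Hi^2 \leq \left(\int_0^t w^{-1}(y)\,dy\right)\left(\int_0^t w(y)|f'(y)|_\Hi^2\,dy\right)\leq c^2\|f\|_w^2,
\end{equation*}
and then $|f(t)|_\Hi^2\leq 2|f(0)|_\Hi^2 + 2c^2\|f\|_w^2$ via the elementary inequality $(a+b)^2\leq 2a^2+2b^2$.

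Combining the two bounds yields
\begin{equation*}
\|\mathcal{S}_t f\|_w^2 \leq 2|f(0)|_\Hi^2 + 2c^2\|f\|_w^2 + \|f\|_w^2 - |f(0)|_\Hi^2 = |f(0)|_\Hi^2 + (1+2c^2)\|f\|_w^2,
\end{equation*}
and since $|f(0)|_\Hi^2\leq\|f\|_w^2$, the right-hand side is bounded by $2(1+c^2)\|f\|_w^2$, giving the claimed uniform bound on $\|\mathcal{S}_t\|_{\text{op}}$. There is no real obstacle here beyond keeping the bookkeeping clean: the integrability hypothesis $w^{-1}\in L^1(\R_+)$ is already used to invoke Lemma~\ref{lemma:Hw_FTC}, and the monotonicity of $w$ together with $w(0)=1$ is what makes both estimates work uniformly in $t$.
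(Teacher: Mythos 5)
Your proposal is correct and follows essentially the same route as the paper: split $\|\mathcal{S}_tf\|_w^2$ into the evaluation term and the weighted integral, bound $|f(t)|$ via the fundamental theorem of calculus from Lemma~\ref{lemma:Hw_FTC} together with Cauchy--Schwarz with the weight $w^{-1/2}w^{1/2}$, and use the monotonicity of $w$ for the integral term. The only (harmless) difference is that you carry the slightly coarser bound $c^2\|f\|_w^2$ in place of $c^2\int_0^\infty w|f'|_\Hi^2\,dx$; the bookkeeping still lands on the same constant $2(1+c^2)$.
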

\begin{proof}
Since, by Lemma~\ref{lemma:Hw_FTC} and Thm. 6.32 in Hunter~\cite{Hunter}, $(\mathcal{S}_tf)'=f'(\cdot+t)$, we find that
\begin{align*}
\|\mathcal{S}_tf\|_w^2&=|f(t)|_{\Hi}^2+\int_0^{\infty}w(x)|f'(x+t)|^2_{\Hi}\,dx \\
&\leq |f(0)+\int_0^tf'(y)\,dy|^2_{\Hi}+\int_t^{\infty}w(y-t)|f'(y)|^2_{\Hi}\,dy \\
&\leq 2|f(0)|^2_{\Hi}+2|\int_0^tf'(y)\,dy|_{\Hi}^2+\int_0^{\infty}w(y)|f'(y)|^2_{\Hi}\,dy \\
&\leq 2|f(0)|_{\Hi}^2+2(\int_0^t|f'(y)|_{\Hi}\,dy)^2+\int_0^{\infty}w(y)|f'(y)|^2_{\Hi}\,dy\,.
\end{align*}
In the first inequality we applied Lemma~\ref{lemma:Hw_FTC}, 
while in the second we applied an elementary inequality and the
monotonicity of $w$. Finally, in the last estimation step we used the norm inequality for Bochner integrals. 
Hence, again appealing to the monotonicity of $w$,
$$
\|\mathcal{S}_tf\|_w^2\leq 2(1+c^2)\|f\|_w^2
$$
and the proof is complete.
\end{proof}
Next, we study continuity properties of the shift semigroup $\mathcal{S}_t$. To this end,
let  
\begin{equation}
\label{domain-deriv-op}
\mathcal{D}:=\left\{f\in\Hi_w\,|\,f'\in\Hi_w\right\}\,,
\end{equation}
where we note that $\text{Dom}(\partial_x)=\mathcal{D}$ and $\partial_x$ being the derivative operator.
\begin{lemma}
If $w^{-1}\in L^1(\R_+)$, the shift operator $\mathcal{S}_t$ is strongly continuous on $\mathcal{H}_w$. 
\end{lemma}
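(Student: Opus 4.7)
The strategy is a standard three-epsilon density argument: combine the uniform bound on $\mathcal{S}_t$ from Lemma \ref{lemma:shift-uniformly-bounded} with strong continuity on a dense subset of ``nice'' functions. By the semigroup property, it suffices to establish $\|\mathcal{S}_t f - f\|_w \to 0$ as $t \downarrow 0$ for every $f \in \Hi_w$.

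The first step is to identify a convenient dense subset. Let
$$
\mathcal{D}_0 := \{f \in \Hi_w \,:\, f' \in C_c(\R_+;\Hi)\}\,.
$$
Under the isomorphism $T:\Hi_w \to \Hi \times L^2(\R_+;\Hi)$ from \eqref{def-T-operator}, $Tf = (f(0), f'\sqrt{w})$. Given any $(u,h) \in \Hi \times C_c(\R_+;\Hi)$, the inverse $f := S(u,h)$ satisfies $f(0) = u$ and $f'(x) = h(x)w^{-1/2}(x)$, which is continuous and compactly supported because $w^{-1/2}$ is continuous and positive. Thus $T(\mathcal{D}_0) \supset \Hi \times C_c(\R_+;\Hi)$, and since $\Hi$ is separable, $C_c(\R_+;\Hi)$ is dense in $L^2(\R_+;\Hi)$ (by approximating $\Hi$-valued simple functions by continuous ones), so $\mathcal{D}_0$ is dense in $\Hi_w$.

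Next, I verify strong continuity on $\mathcal{D}_0$. Fix $f \in \mathcal{D}_0$ with $\mathrm{supp}(f') \subset [0,R]$ and $t \in (0,1]$. Using $(\mathcal{S}_t f)' = f'(\cdot + t)$,
$$
\|\mathcal{S}_t f - f\|_w^2 = |f(t)-f(0)|_{\Hi}^2 + \int_0^{\infty} w(x)\,|f'(x+t) - f'(x)|_{\Hi}^2\,dx\,.
$$
For the first term, Lemma \ref{lemma:Hw_FTC} gives $|f(t)-f(0)|_{\Hi} \leq t\sup_{y\in[0,R]}|f'(y)|_{\Hi} \to 0$. For the second, $f'$ is uniformly continuous on the compact set $[0,R+1]$, and for $t \in (0,1]$ the integrand vanishes outside $[0,R+1]$, so
$$
\int_0^{\infty} w(x)\,|f'(x+t) - f'(x)|_{\Hi}^2\,dx \leq \Bigl(\sup_{x\in[0,R+1]}|f'(x+t)-f'(x)|_{\Hi}^2\Bigr)\int_0^{R+1}w(x)\,dx\,,
$$
which tends to $0$ as $t \downarrow 0$.

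Finally, for arbitrary $f \in \Hi_w$ and $\varepsilon > 0$, pick $f_0 \in \mathcal{D}_0$ with $\|f - f_0\|_w < \varepsilon$. Writing $\mathcal{S}_t f - f = \mathcal{S}_t(f-f_0) + (\mathcal{S}_t f_0 - f_0) + (f_0 - f)$ and applying Lemma \ref{lemma:shift-uniformly-bounded} yields
$$
\|\mathcal{S}_t f - f\|_w \leq (\sqrt{2(1+c^2)} + 1)\,\varepsilon + \|\mathcal{S}_t f_0 - f_0\|_w\,,
$$
and letting $t \downarrow 0$ followed by $\varepsilon \downarrow 0$ gives the claim. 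The main obstacle in this plan is really just the density verification, which is handled cleanly via the isomorphism $T$; the continuity estimate on $\mathcal{D}_0$ is routine because the compact support of $f'$ tames the otherwise unbounded weight $w$.
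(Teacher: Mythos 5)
Your proof is correct, and while it follows the same overall skeleton as the paper's --- strong continuity on a dense subclass, combined with the uniform bound of Lemma~\ref{lemma:shift-uniformly-bounded} in a three-epsilon argument, with density obtained through the isometry $T$ of \eqref{def-T-operator} --- the key middle step is handled by a genuinely different and more elementary argument. The paper first proves strong continuity on $\mathcal{D}=\{f\in\Hi_w : f'\in\Hi_w\}$ by writing $f'(x+t)-f'(x)=t\int_0^1 f''(x+st)\,ds$ and estimating in terms of $f''$, and only afterwards invokes density of the smaller class $\mathcal{D}_0=\{f\in C^2(\R_+;\Hi) : f'\in C^1_c(\R_+;\Hi)\}$. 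You instead work directly on $\{f : f'\in C_c(\R_+;\Hi)\}$, where compact support of $f'$ confines the integral to a bounded interval on which $\int w<\infty$, and uniform continuity of $f'$ does the rest; no second derivative is needed. Your route buys two things: it avoids the hypothesis $f'\in\Hi_w$ entirely, and it sidesteps a questionable estimate in the paper's argument (the bound of $\int_0^\infty\int_0^1 w(x)|f''(x+st)|_{\Hi}^4\,ds\,dx$ by $\int_0^1\|\mathcal{S}_{st}f'\|_w^4\,ds$ does not follow from Cauchy--Schwarz as written, though it is repairable by keeping squares rather than fourth powers throughout). What the paper's route buys is strong continuity established on all of $\mathrm{Dom}(\partial_x)$, which is of some independent interest but not needed for the lemma. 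Two minor points in your write-up: the integrand in your second term already vanishes for $x>R$ (the window $[0,R+1]$ is a harmless over-estimate), and your density step tacitly uses that $T$ is a surjective isometry onto $\Hi\times L^2(\R_+;\Hi)$, which is exactly what Proposition~\ref{prop:H-sep-hilbert} provides.
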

\begin{proof}
We first show strong continuity on $\mathcal{D}$ defined in \eqref{domain-deriv-op}. 
Indeed, for $f\in\Hi$ we have by the Lemma~\ref{lemma:Hw_FTC} above
$$
f(t)-f(0)=\int_0^tf'(y)\,dy\,.
$$
Moreover, if $f'\in\Hi_w$, then the same Lemma yields,
$$
f'(x+t)-f'(x)=\int_x^{x+t}f''(y)\,dy=t\int_0^1f''(x+st)\,ds\,.
$$
Also, we have that $\mathcal{S}_t f=f(\cdot+t)\in\Hi_w$ is weakly differentiable 
(see proof of Lemma~\ref{lemma:Hw_FTC}).  
Thus, for $f\in\mathcal{D}$ we find from the the norm inequality for Bochner integrals and 
Cauchy-Schwartz' inequality, 
\begin{align*}
\|\mathcal{S}_tf-f\|_w^2&=|f(t)-f(0)|_{\Hi}^2+\int_0^{\infty}w(x)|f'(x+t)-f'(x)|^2_{\Hi}\,dx \\
&=|\int_0^tf'(y)\,dy|^2_{\Hi}+\int_0^{\infty}w(x)t^2|\int_0^1f''(x+st)\,ds|_{\Hi}^2\,dx \\
&\leq(\int_0^t|f'(y)|^2_{\Hi}\,dy)^2+t^2\int_0^{\infty}w(x)(\int_0^1|f''(x+st)|^2_{\Hi}\,ds)^2\,dx \\
&\leq(\int_0^t|f'(y)|_{\Hi}^2\,dy)^2+t^2\int_0^{\infty}\int_0^1w(x)|f''(x+st)|^4_{\Hi}\,ds\,dx \\
&\leq(\int_0^t|f'(y)|_{\Hi}^2\,dy)^2+t^2\int_0^1\|\mathcal{S}_{st}f'\|_w^4\,ds\,.
\end{align*}
The second integral is finite as $\mathcal{S}_t$ is a uniformly bounded operator on $\Hi_w$ from 
Lemma~\ref{lemma:Hw_FTC}. Thus, letting $t\downarrow 0$, we get that $\|\mathcal{S}_tf-f\|_w\rightarrow 0$, showing
strong continuity on $\mathcal{D}$. 

By appealing to a density argument for $\mathcal{D}$ in $\Hi_w$, we can conclude that
$\mathcal{S}_t$ is strongly continuous on $\Hi_w$:  Introduce the subspace (following Filipovic~\cite{F}, page 77)
$$
\mathcal{D}_0=\{f\in C^2(\R_+;\Hi)\,|\,f'\in C^1_c(\R_+;\Hi)\}\,,
$$
where $C^2(\R_+;\Hi)$ denotes the twice continuously strongly differentiable
functions and $C^1_c(\R_+;\Hi)$ functions with compact support being once continuously strongly
differentiable. 
Prop.~6.29 in Hunter~\cite{Hunter} ensures that $C_c^1(\R_+;\Hi)$ is dense in $L^2(\R_+;\Hi)$. For $f\in\Hi_w$, 
let $\{h_n\}_n\subset C_c^1(\R_+;\Hi)$ be an approximating sequence of $f'\sqrt{w}\in L^2(\R_+;\Hi)$. Define
$f_n:=T^{-1}(f(0),h_n)$ for the operator $T$ defined in \eqref{def-T-operator}. We have that $f_n\in\mathcal{D}_0$ 
and $\|f_n-f\|_w\rightarrow 0$ as $n\rightarrow\infty$ because $T$ is an isomorphism (see proof of
Prop.~\ref{prop:H-sep-hilbert}). This shows that $\mathcal{D}_0$ is dense in $\mathcal{H}_w$.  

Thus, for $f,g\in\Hi_w$, the triangle inequality along with the uniform boundedness of $\mathcal{S}_t$, yield,
\begin{align*}
\|\mathcal{S}_tf-f\|_w&\leq \|\mathcal{S}_t(f-g)\|_w+\|\mathcal{S}_tg-g\|_w+\|g-f\|_w \\
&\leq\sqrt{2(1+c^2)}\|f-g\|_w+\|f-g\|_w+\|\mathcal{S}_tg-g\|_w\,.
\end{align*}
But, since $\mathcal{D}_0$ is dense in $\Hi_w$, we choose $g\in\mathcal{D}_0$ such that
$\|f-g\|_w\leq \epsilon/2(1+\sqrt{2(1+c^2)}$. By strong continuity of $\mathcal{S}_t$ on $\mathcal{D}$ we choose
$t$ such that $\|\mathcal{S}_tg-g\|_w\leq \epsilon/2$. Then, $\mathcal{S}_t$ is strongly continuous on $\Hi_w$. 
The proof is complete.
\end{proof}
We conclude that $\mathcal{S}_t$ is a $C_0$-semigroup on $\Hi_w$ with a generator $\partial_x$ being 
defined on $\mathcal{D}$, a dense subset of $\mathcal{H}_w$.  
 
Introduce the evaluation map $\delta_x;\Hi_w\rightarrow\Hi$ for $x\in\R_+$ as
$\delta_xf:=f(x)$ for $f\in\Hi_w$. We prove that $\delta_x$ is a bounded linear operator:
\begin{lemma}
Suppose that $w^{-1}\in L^1(\R_+)$. Then $|\delta_xf|_{\Hi}\leq K\|f\|_w$ for a positive constant $K$ given by
$K^2=2\max(1,\int_0^{\infty}w^{-1}(y)\,dy)$.
\end{lemma}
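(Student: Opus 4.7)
The plan is to apply the $\Hi$-valued fundamental theorem of calculus from Lemma~\ref{lemma:Hw_FTC} to write $f(x) = f(0) + \int_0^x f'(y)\,dy$ for $f \in \Hi_w$, and then estimate each piece against $\|f\|_w$. The main technical ingredient is a Cauchy--Schwartz splitting of the integral using the weight $w$, analogous to what was done repeatedly in Lemma~\ref{lemma:classic-ambit-filipovic}.

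First I would apply the norm inequality for the Bochner integral together with the identity above to obtain
$$
|\delta_x f|_\Hi \leq |f(0)|_\Hi + \int_0^x |f'(y)|_\Hi\, dy.
$$
Then, writing $|f'(y)|_\Hi = w^{-1/2}(y)\cdot w^{1/2}(y)|f'(y)|_\Hi$ and applying the Cauchy--Schwartz inequality, I bound the integral term by $c\bigl(\int_0^\infty w(y)|f'(y)|_\Hi^2\,dy\bigr)^{1/2}$, where $c^2 = \int_0^\infty w^{-1}(y)\,dy$ is the constant from Lemma~\ref{lemma:Hw_FTC}. Squaring and using the elementary inequality $(a+b)^2 \leq 2a^2 + 2b^2$ yields
$$
|\delta_x f|_\Hi^2 \leq 2|f(0)|_\Hi^2 + 2 c^2 \int_0^\infty w(y)|f'(y)|_\Hi^2\,dy.
$$

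The final step is to recognize that the right-hand side is bounded by $2\max(1,c^2)\|f\|_w^2$: if $c^2 \leq 1$ we keep the coefficient of $|f(0)|_\Hi^2$ as $2$ and replace $2c^2$ by $2$ on the gradient term, while if $c^2 \geq 1$ we inflate the coefficient of $|f(0)|_\Hi^2$ from $2$ to $2c^2$. In either case the bound collapses to $2\max(1,c^2)\|f\|_w^2 = K^2 \|f\|_w^2$, giving the claimed norm estimate.

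I do not expect any serious obstacle here: the only non-routine observation is that one should square before optimizing constants (otherwise one naively obtains the slightly worse bound $(1+c)\|f\|_w$, which corresponds to $K^2 = (1+c)^2$ rather than the cleaner $2\max(1,c^2)$). Linearity of $\delta_x$ is trivial, so once the estimate is in hand the lemma is proved.
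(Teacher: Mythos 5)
Your proof is correct and follows essentially the same route as the paper: apply the fundamental theorem of calculus from Lemma~\ref{lemma:Hw_FTC}, use the Bochner norm inequality and the weighted Cauchy--Schwartz splitting $|f'| = w^{-1/2}\cdot w^{1/2}|f'|$, and then absorb the constants into $2\max(1,c^2)$. The only (immaterial) difference is the order of squaring versus applying the triangle inequality, and your choice yields exactly the constant stated in the lemma.
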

\begin{proof}
For $f\in\Hi_w$ it holds by Lemma~\ref{lemma:Hw_FTC} that 
$$
\delta_xf=f(x)=f(0)+\int_0^xf'(y)\,dy\,.
$$
But then by Bochner's norm inequality and Cauchy-Schwartz' inequality,
\begin{align*}
|f(x)|_{\Hi}^2&\leq 2|f(0)|_{\Hi}^2+2|\int_0^xf'(y)\,dy|^2_{\Hi} \\
&\leq2|f(0)|_{\Hi}^2+2(\int_0^x|f'(y)|_{\Hi}\,dy)^2 \\
&\leq 2|f(0)|_{\Hi}^2+2\int_0^{\infty}w^{-1}(y)\,dy\int_0^{\infty}w(y)|f'(y)|^2_{\Hi}\,dy\,.
\end{align*}
This concludes the proof.
\end{proof}

We end this Subsection with some results on linear functionals on $\Hi$ and $\Hi_w$. To this end, 
let $H_w$ be the classical Filipovic space (which can be obtained by selecting $\Hi=\R$ in the definition
of $\Hi_w$ above). The norm is denoted  by $|\cdot|_w$. We have the following proposition:
\begin{proposition}
For $\mathcal{L}\in\Hi^*$ and $g\in\Hi_w$, the real-valued function $x\mapsto\mathcal{L}(g(x))$ on $\R_+$ is
an element of $H_w$.
\end{proposition}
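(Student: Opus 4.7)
The plan is to verify the three requirements for membership in $H_w$: local integrability, weak differentiability in the scalar sense, and finiteness of the Filipovic norm. The main observation throughout is that a bounded linear functional $\mathcal{L}\in\Hi^*$ commutes with the Bochner integral, which is a standard property (see e.g.\ Hunter~\cite{Hunter}).

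First, I would set $f(x):=\mathcal{L}(g(x))$ and note that since $\mathcal{L}$ is continuous linear and $g,g'\in L^1_{\text{loc}}(\R_+;\Hi)$, the scalar functions $\mathcal{L}\circ g$ and $\mathcal{L}\circ g'$ are measurable and locally integrable, with $\int_K |\mathcal{L}(g(x))|\,dx\leq \|\mathcal{L}\|_{\Hi^*}\int_K |g(x)|_{\Hi}\,dx<\infty$ for every compact $K\subset\R_+$, and similarly for $g'$.

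Next, to identify the weak derivative of $f$, I would apply $\mathcal{L}$ to the defining identity of the Bochner weak derivative of $g$: for any $\phi\in C_c^\infty(\R_+)$,
\begin{equation*}
\int_{\R_+} g(x)\phi'(x)\,dx = -\int_{\R_+} g'(x)\phi(x)\,dx,
\end{equation*}
and then use that $\mathcal{L}$ commutes with the Bochner integral to obtain
\begin{equation*}
\int_{\R_+} \mathcal{L}(g(x))\phi'(x)\,dx = -\int_{\R_+} \mathcal{L}(g'(x))\phi(x)\,dx.
\end{equation*}
This shows that $f$ is weakly differentiable in the real-valued sense with $f'(x)=\mathcal{L}(g'(x))$ a.e.

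Finally, I would estimate the Filipovic norm of $f$ by using the operator bound $|\mathcal{L}(v)|\leq\|\mathcal{L}\|_{\Hi^*}|v|_{\Hi}$ twice:
\begin{equation*}
|f|_w^2 = |\mathcal{L}(g(0))|^2 + \int_0^\infty w(x)|\mathcal{L}(g'(x))|^2\,dx \leq \|\mathcal{L}\|_{\Hi^*}^2\left(|g(0)|_{\Hi}^2+\int_0^\infty w(x)|g'(x)|_{\Hi}^2\,dx\right)=\|\mathcal{L}\|_{\Hi^*}^2\|g\|_w^2,
\end{equation*}
which is finite since $g\in\Hi_w$. This yields $f\in H_w$ together with the useful bound $|f|_w\leq\|\mathcal{L}\|_{\Hi^*}\|g\|_w$. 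The only nontrivial point is invoking the interchange of $\mathcal{L}$ with the Bochner integral, which is classical but worth citing explicitly.
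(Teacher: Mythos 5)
Your proposal is correct and follows essentially the same route as the paper: both arguments rest on commuting $\mathcal{L}$ with the Bochner integral to identify the weak derivative of $x\mapsto\mathcal{L}(g(x))$ as $\mathcal{L}(g'(x))$, and then conclude with the identical operator-norm estimate $|\mathcal{L}(g(\cdot))|_w\leq\|\mathcal{L}\|_{\text{op}}\|g\|_w$. The only (harmless) difference is that you verify weak differentiability directly from the test-function identity, whereas the paper applies $\mathcal{L}$ to the representation $g(x)=g(0)+\int_0^x g'(y)\,dy$.
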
 
\begin{proof}
Recall that if $g\in\Hi_w$, then $g(x)\in\Hi$ for any $x\in\R_+$, and thus $\mathcal{L}(g(\cdot))$ is a real-valued measurable
function on $\R_+$ which is locally integrable. As $g\in\Hi_w$ it is weakly differentiable,
$$
g(x)=g(0)+\int_0^xg'(y)\,dy\,,
$$
and by properties of the Bochner integral
$$
\mathcal{L}(g(x))=\mathcal{L}(g(0))+\int_0^x\mathcal{L}(g'(y))\,dy\,.
$$ 
Hence, $x\mapsto\mathcal{L}(g(x))$ is weakly differentiable and $\partial_x(\mathcal{L}(g(x)))=\mathcal{L}(g'(x))$,
for $\partial_x$ being the differential operator. Thus,
\begin{align*}
|\mathcal{L}(g(\cdot))|_w^2&=|\mathcal{L}(g(0))|^2+\int_0^{\infty}w(y)|\mathcal{L}(g'(y))|^2\,dy \\
&\leq\|\mathcal{L}\|_{\text{op}}^2|g(0)|_{\Hi}^2+\|\mathcal{L}\|_{\text{op}}^2\int_0^{\infty}w(y)|g'(y)|_{\Hi}^2\,dy \\
&=\|\mathcal{L}\|^2_{\text{op}}|g|_{\Hi}^2<\infty\,.
\end{align*}
The result follows.
\end{proof}
Note that we can write $\mathcal{L}(g(x))=\mathcal{L}\circ\delta_x (g)$, and that $\mathcal{L}\circ\delta_x\in\Hi_w^*$,
whenever $\mathcal{L}\in\Hi^*$. 
This means that there exists a unique $\ell_x\in\Hi_w$ such that
$$
\mathcal{L}(g(x))=\mathcal{L}\circ\delta_x(g)=\langle g,\ell_x\rangle_w\,.
$$
We can characterize $\ell_x$:
\begin{proposition}
Assume $w^{-1}\in L^1(\R_+)$. It holds $\mathcal{L}(g(x))=\langle g,\ell_x\rangle_w$ for $\ell_x(\cdot)\in\Hi_w$ where
$$
\ell_x(\cdot)=\mathcal{L}^*(h_x(\cdot))
$$
for $y\mapsto h_x(y)=1+\int_0^{x\wedge y}w^{-1}(z)\,dz\in H_w$. 
\end{proposition}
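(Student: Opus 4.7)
The plan is to verify the identity $\mathcal{L}(g(x))=\langle g,\ell_x\rangle_w$ by direct computation of the right-hand side, exploiting the Riesz representation of $\mathcal{L}$ and the fundamental theorem of calculus (Lemma~\ref{lemma:Hw_FTC}) to reduce everything to the statement $g(x)=g(0)+\int_0^x g'(y)\,dy$.

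First I would check that the candidate $\ell_x$ actually lies in $\Hi_w$. Since $y\mapsto x\wedge y$ is absolutely continuous with weak derivative $\mathrm{1}_{[0,x]}(y)$, the scalar function $h_x$ satisfies $h_x(0)=1$ and $h_x'(y)=w^{-1}(y)\mathrm{1}_{[0,x]}(y)$, and
\[
|h_x|_w^2=1+\int_0^x w(y)w^{-2}(y)\,dy=1+\int_0^x w^{-1}(y)\,dy\leq 1+\int_0^{\infty}w^{-1}(y)\,dy<\infty,
\]
so $h_x\in H_w$. Now $\mathcal{L}\in\Hi^{*}$ corresponds via Riesz to a unique $u\in\Hi$ with $\mathcal{L}(v)=(v,u)_{\Hi}$, and the adjoint $\mathcal{L}^*:\R\to\Hi$ is simply $\mathcal{L}^*(\alpha)=\alpha u$. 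Hence $\ell_x(y)=h_x(y)u$, which is weakly differentiable with $\ell_x'(y)=h_x'(y)u=w^{-1}(y)\mathrm{1}_{[0,x]}(y)u$, and
\[
\|\ell_x\|_w^2=|u|_{\Hi}^2\,|h_x|_w^2<\infty,
\]
so $\ell_x\in\Hi_w$.

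Next, I would compute the inner product. Since $\ell_x(0)=u$ and $\ell_x'(y)=w^{-1}(y)\mathrm{1}_{[0,x]}(y)u$,
\begin{align*}
\langle g,\ell_x\rangle_w
&=(g(0),u)_{\Hi}+\int_0^{\infty}w(y)\bigl(g'(y),w^{-1}(y)\mathrm{1}_{[0,x]}(y)u\bigr)_{\Hi}\,dy \\
&=(g(0),u)_{\Hi}+\int_0^{x}(g'(y),u)_{\Hi}\,dy
=\mathcal{L}(g(0))+\int_0^{x}\mathcal{L}(g'(y))\,dy.
\end{align*}
Here I used that $w(y)\cdot w^{-1}(y)=1$ cancels the weight inside the integral. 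Finally, since $\mathcal{L}$ is bounded and $g'\in L^1([0,x];\Hi)$ by Lemma~\ref{lemma:Hw_FTC}, the Bochner integral commutes with $\mathcal{L}$, so
\[
\mathcal{L}(g(0))+\int_0^{x}\mathcal{L}(g'(y))\,dy=\mathcal{L}\!\left(g(0)+\int_0^{x}g'(y)\,dy\right)=\mathcal{L}(g(x)),
\]
again invoking Lemma~\ref{lemma:Hw_FTC} for the last equality. This gives the claim.

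There is no real obstacle: the only subtlety is recognising that $\mathcal{L}^*$ is multiplication by the Riesz representer, and being careful that the fundamental theorem of calculus applies inside $\mathcal{L}$ because the Bochner integral commutes with bounded linear functionals. Uniqueness of $\ell_x$ is automatic from the Riesz representation on $\Hi_w$ already used implicitly in the statement.
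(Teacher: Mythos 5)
Your proof is correct, but it takes a different route from the paper's. The paper first invokes Lemma~5.3.1 of Filipovic, which says that $h_x$ is the Riesz representer of the evaluation functional $\bar{\delta}_x$ on the scalar space $H_w$, applies this to the scalar function $x\mapsto\mathcal{L}(g(x))$ (which lies in $H_w$ by the immediately preceding proposition), and then unpacks the resulting inner product $(\mathcal{L}(g(\cdot)),h_x)_w$ to read off $\ell_x'=\mathcal{L}^*(h_x')$ and $\ell_x(0)=\mathcal{L}^*1$. You instead work entirely from the explicit candidate: you compute $h_x'(y)=w^{-1}(y)\mathrm{1}_{[0,x]}(y)$, identify $\mathcal{L}^*$ as multiplication by the Riesz representer $u$ of $\mathcal{L}$ so that $\ell_x=h_x(\cdot)u$, and verify $\langle g,\ell_x\rangle_w=\mathcal{L}(g(x))$ directly via the cancellation $w\cdot w^{-1}=1$, Lemma~\ref{lemma:Hw_FTC}, and the commutation of a bounded functional with the Bochner integral. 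Your argument is more self-contained --- it needs neither the citation to Filipovic nor the preceding proposition, and in effect re-derives the scalar reproducing-kernel property along the way --- and it adds the explicit check that $\ell_x\in\Hi_w$, which the paper leaves implicit. The paper's route is shorter given the scalar result as a black box and makes the structural point that the vector-valued kernel is obtained by pushing the scalar kernel through $\mathcal{L}^*$; yours makes the same identity transparent by brute-force verification. Both are valid.
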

\begin{proof}
From Lemma 5.3.1 in Filipovic̃\cite{F}, 
$$
\mathcal{L}(g(x))=\bar{\delta}_x(\mathcal{L}(g(\cdot)))=(\mathcal{L}(g(\cdot)),h_x)_w
$$
where $\bar{\delta}_x$ is the evaluation map on $H_w$. Hence,
\begin{align*}
\mathcal{L}(g(x))&=(\mathcal{L}(g(\cdot)),h_x)_w \\
&=\mathcal{L}(g(0))1+\int_0^{\infty}w(y)\mathcal{L}(g'(y))h_x'(y)\,dy \\
&=(g(0),\mathcal{L}^*1)_{\Hi}+\int_0^{\infty}w(y)(g'(y),\mathcal{L}^*(h_x'(y)))_{\Hi}\,dy\,.
\end{align*}
We find that $\ell_x'(y)=\mathcal{L}^*(h_x'(y))$ by linearity of $\mathcal{L}^*$ and the fundamental theorem
of calculus. Noting that
$h_x(0)=1$, the proof follows. 
\end{proof}

\section{A finite difference scheme}
\label{FD}

This section presents a finite difference scheme for approximating solutions of a slightly generalized version of the hyperbolic SPDE \eqref{abstract-ambit-spde}. More specifically, we consider the hyperbolic SPDE
set in $\widetilde{\Hi}$
\begin{equation}
\label{spde:gen}
dY(t) = \partial_\xi Y(t)\,dt + \beta(t)\,dL(t)\,, 
\end{equation}
with given initial value $Y(0) = Y_0 \in \widetilde \Hi$. Here, $\beta(t)\in L(\V,\widetilde{\Hi}$)
is predictable and such that
$$
\E\left[\int_0^t\|\beta(s)\mathcal{Q}^{1/2}\|_{\text{HS}}^2\,ds\right]<\infty\,.
$$
For the special case of Hambit processes, we choose $\beta(t)=\Gamma(t+\cdot,t)(\sigma(t))$. However, in
this section, we simplify the notation by considering a general stochastic integrand $\beta$. Suppose
in addition that 
$$
\E\left[\int_0^t\|\mathcal{S}_{t-s}\beta(s)\mathcal{Q}^{1/2}\|_{\text{HS}}^2\,ds\right]<\infty\,,
$$
then by Peszat and Zabczyk~\cite[Ch. 6]{PZ}, the SPDE~\eqref{spde:gen} possesses a mild solution
$$
Y(t)=\mathcal{S}_tY_0+\int_0^t\mathcal{S}_{t-s}\beta(s)\,dL(s)\,.
$$
In what follows, we can easily include a drift in the SPDE above, but we refrain from doing so reduce 
notation and technicalities.



Let $\Delta x > 0$ and $\Delta t > 0$ denote the discrete steps in space and time respectively, and
set $t_n=n\Delta t$, $x_j=j\Delta x$ for $n=0,\ldots,N$ and $j=0,\ldots,J$ for some $J,N \in \N$. We aim 
at introducing an approximation $\widetilde{Y}^n$ of $Y$ at time $t_n$ of the form
\begin{equation}
\widetilde{Y}^n=\sum_{j=0}^{J-1}\left\{\frac{x-x_j}{\Delta x}(y_{j+1}^n-y_j^n)+y_j^n\right\}\mathbf{1}_{[x_j,x_{j+1})}(\cdot)\,,
\end{equation}
for $x\leq x_J$ and $\widetilde{Y}^n(x)=y_J^n$ for $x>x_J$. Here, $\{y_j^n\}_{j=0,\ldots J}\subset\Hi$. We assume that $\widetilde{Y}^n\in\widetilde{\Hi}$, and remark that in the case $\widetilde{\Hi}=\Hi_{w}$ this assumption holds since the weak derivative of $\widetilde{Y}^n$ in 
that case is piecewise constant and zero outside $x>x_J$. 
It is convenient to think of
$$
y_j^n \approx \delta_{j\Delta x}Y(n\Delta t)\,,
$$ 
that is, $\delta_{x_j}\widetilde{Y}^n$ approximates the sampled solution of \eqref{spde:gen} at the point $(n\Delta t, j\Delta x)$. Here,
we recall the evaluation operator $\delta_x\in L(\widetilde{\Hi},\Hi)$ introduced in the previous section.
For the initial value $Y_0$, we introduce the approximation
\begin{equation}
\widetilde{Y}_0:=\sum_{j=0}^{J-1}\left\{\frac{x-x_j}{\Delta x}(\delta_{x_{j+1}}Y_0-\delta_{x_j}Y_0)+\delta_{x_j}Y_0\right\}\mathbf{1}_{[x_j,x_{j+1})}(\cdot)\,,
\end{equation}
for $x\leq x_J$ and $\widetilde{Y}_0(x)=\delta_{x_J}Y_0$ for $x>x_J$.  
This is indeed a linear interpolation of $Y_0$ as an element in $\widetilde{\Hi}$. We assume 
$\widetilde{Y}_0\in\widetilde{\Hi}$, and obviously let $y_j^0:=\delta_{x_j}\widetilde{Y}_0=\delta_{x_j}Y_0$.  
Since $\beta(t)\in L(\V,\widetilde{\Hi})$,
$\delta_x\beta(t)\in L(\V,\Hi)$. 
As we shall see, we need a particular approximation of $\beta(t)$, denoted by $\widetilde{\beta}(t)$ 
and given as (for $x\leq x_J$)
\begin{equation}
\widetilde{\beta}(t)=\sum_{j=0}^{J-1}\left\{\frac{x-x_j}{\Delta x}(\delta_{x_{j+1}}\beta(t)-\delta_{x_j}\beta(t))+\delta_{x_j}\beta(t)\right\}\mathbf{1}_{[x_j,x_{j+1})}(\cdot)\,,
\end{equation}
and $\widetilde{\beta}(t)(x)=\delta_{x_J}\beta(t)$ for $x>x_J$. 
Thus, we sample the operator $\beta(t)\in L(\V,\widetilde{\Hi})$ into a linear combination of operators
$\delta_{x_j}\beta(t)\in L(\V,\Hi), j=0,\ldots,J$. We see that $x\mapsto\widetilde{\beta}(t)(x)$
is a function from $\R_+$ into $L(\V,\Hi)$. We therefore define 
$\widetilde{\beta}(t)\in L(\V,\widetilde{\Hi})$ by 
\begin{equation}
\widetilde{\beta}(t)(f)=\sum_{j=0}^{J-1}\left\{\frac{x-x_j}{\Delta x}(\delta_{x_{j+1}}\beta(t)(f)-\delta_{x_j}\beta(t)(f))+\delta_{x_j}\beta(t)(f)\right\}\mathbf{1}_{[x_j,x_{j+1})}(\cdot)\,,
\end{equation}
for $f\in\V$, with $x\leq x_J$. When $x>x_J$, we let $\widetilde{\beta}(t)(f)(x)=\delta_{x_J}\beta(t)(f)$. Since $\delta_{x_j}\beta(t)(f)\in\Hi$, $\widetilde{\beta}(t)(f)$ is a function from $\R_+$
into $\Hi$. We {\it assume} that $\widetilde{\beta}(t)(f)\in\widetilde{\Hi}$ from now on, and remark
that when $\widetilde{\Hi}=\widetilde{\Hi}_w$, this assumption is fulfilled since we have a piecewise
constant weak derivative which is zero outside $x_J$. 

To derive a recursive scheme for $y_j^n$ in $n$, we use finite difference approximations of
the SPDE \eqref{spde:gen}, thus using $dY(t) \approx Y(t+\Delta t) - Y(t)$, $dt \approx \Delta t$, $dL(t) \approx L(t+\Delta t) - L(t)$ and  $\partial_\xi Y(t) \approx (Y(t)(\cdot + \Delta x) - Y(t))/\Delta x$ in
\eqref{spde:gen} to find the {\it finite difference scheme}
\begin{equation}
\label{def:FD}
y_j^{n+1} = \lambda y_{j+1}^n + (1-\lambda)y_j^n + \beta_j^n(\Delta L^n)\,,
\end{equation}
where $\lambda = \Delta t / \Delta x$, 
$\beta_j^n = \delta_{x_j}\beta(t_n)$ and $\Delta L^n = L(t_{n+1}) - L(t_n)$. 

We note that the finite difference scheme \eqref{def:FD} is a Hilbert space generalization of a scheme proposed and analysed by Benth and Eyjolfsson~\cite{BE}. In that paper a numerical approximation of
real-valued VMV processes based on a scheme for a hyperbolic SPDE was introduced, analogous to 
the case we study here. Our infinite dimensional approach and analysis that follows are inspired by
Benth and Eyjolfsson~\cite{BE}. Notice that the information in the finite difference scheme in \eqref{def:FD} flows to the left as time progresses. Hence, for a given time $n\Delta t$, the scheme will provide values for $y_j^{n+1}$, $j=0,\ldots,J-1$ for the next time step. As we wish to study our approximation $\widetilde{Y}^n$ for $n=0,1,\ldots N$ 
and $x\leq x_J$, we can adjust our finite differencing to be made for suitably large choices of grid points
in space $x$ initially, so that at terminal time $N\Delta t$ we have a computation of $y_j^{N}$ for
all $j=0,\ldots,J$. Indeed, this is the same as letting $J$ be depending on the time step $n$. We 
refrain from going into technical details on the practicalities here, but refer to Benth and Eyjolfsson~\cite{BE} for more discussion.

As in the case of a finite difference scheme for the standard advection partial differential equation, one needs some constraints on the discrete steps, i.e. $(\Delta x,\Delta t)$, to guarantee its stability. The stability condition of Courant, Friedrichs, and Lewy (the CFL condition, see \cite{CFL}) is needed to ensure the stability of our finite difference scheme \eqref{def:FD}. In our case this translates into the necessary constraint 
\begin{equation}
\label{stable}
\Delta t \leq \Delta x,
\end{equation}
which we assume to hold.

Given our Hilbert space $\widetilde \Hi$ of $\Hi$-valued functions on $\R_+$  it will be convenient for our analysis to define the following family of bounded linear operators on $\widetilde \Hi$. Given positive $\Delta x > 0$ and $\Delta t > 0$ corresponding to the steps of the finite difference scheme in space and time respectively consider the family $\{\mathcal{T}_{\Delta x,\Delta t}\}_{\Delta x > 0, \Delta t > 0}$ which is defined by 
\begin{equation}
\label{def:T}
\mathcal T_{\Delta x,\Delta t} = \mathcal{I} + \Delta t\frac{\mathcal{S}_{\Delta x} - \mathcal I}{\Delta x},
\end{equation}
for all $\Delta x > 0, \Delta t > 0$, where $\mathcal I$ denotes the identity operator on $\widetilde{\Hi}$. 
\begin{lemma}
\label{lem:itscheme}
For given steps $\Delta x > 0$ in space and $\Delta t > 0$ in time, $\widetilde{Y}^n$ admits the representation
\begin{equation}
\widetilde{Y}^n=\mathcal{T}^n\widetilde{Y}_0+\sum_{i=0}^{n-1}\mathcal{T}^{n-1-i}\widetilde{\beta}^i(\Delta L^i)\,,
\end{equation}
for $n=0,\ldots,N$. 
Here, $\mathcal T := \mathcal T_{\Delta x, \Delta t}$ is defined by \eqref{def:T}, and we use the conventions that  $\mathcal T^n = \mathcal T^{\circ n}$ denotes the composition of the operator 
$\mathcal T$ with itself $n$ times, and $\mathcal T^0 =\mathcal I$.
\end{lemma}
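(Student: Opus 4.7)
The proposed proof is a direct induction on $n$. For the base case $n=0$, the empty sum vanishes, $\mathcal{T}^0=\mathcal{I}$, and $\widetilde{Y}^0=\widetilde{Y}_0$ by construction, so the identity holds. For the inductive step, I would first reduce the claim to the one-step recursion
$$
\widetilde{Y}^{n+1}=\mathcal{T}\widetilde{Y}^n+\widetilde{\beta}^n(\Delta L^n)\,,
$$
after which applying $\mathcal{T}$ to the assumed representation at level $n$ and adding $\widetilde{\beta}^n(\Delta L^n)$ telescopes the powers of $\mathcal{T}$ to deliver the claimed formula at level $n+1$.

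To establish this one-step identity I would compare both sides pointwise as elements of $\widetilde{\Hi}$. Writing $\mathcal{T}=\mathcal{I}+\lambda(\mathcal{S}_{\Delta x}-\mathcal{I})$ with $\lambda=\Delta t/\Delta x$, and using that $\widetilde{Y}^n$ is piecewise linear on the cells $[x_j,x_{j+1}]$ and constant beyond $x_J$, the shifted function $\mathcal{S}_{\Delta x}\widetilde{Y}^n$ has its breakpoints translated by exactly one grid spacing; consequently $\mathcal{T}\widetilde{Y}^n$ is again piecewise linear on the original partition. Since $\widetilde{\beta}^n(\Delta L^n)$ is piecewise linear on the same cells by construction, both sides of the one-step identity are piecewise linear on the same grid, and it suffices to verify equality at the nodes $x_j$.

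At $x_j$, using $\delta_{x_j}\mathcal{S}_{\Delta x}=\delta_{x_{j+1}}$ and $\delta_{x_j}\widetilde{\beta}^n=\beta_j^n$, the right-hand side evaluates to
$$
y_j^n+\lambda(y_{j+1}^n-y_j^n)+\beta_j^n(\Delta L^n)=(1-\lambda)y_j^n+\lambda y_{j+1}^n+\beta_j^n(\Delta L^n)\,,
$$
which equals $y_j^{n+1}$ by the finite difference scheme \eqref{def:FD}, matching $\delta_{x_j}\widetilde{Y}^{n+1}$ by the definition of $\widetilde{Y}^{n+1}$. The induction then closes.

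The main obstacle I anticipate is a minor bookkeeping issue at the right end of the grid: on the cell $[x_{J-1},x_J]$ the shift $\mathcal{S}_{\Delta x}\widetilde{Y}^n$ reaches into the frozen tail region where $\widetilde{Y}^n\equiv y_J^n$, so one must check that the identification of $\delta_{x_j}\mathcal{S}_{\Delta x}\widetilde{Y}^n$ with $y_{j+1}^n$ (as well as the piecewise linearity of $\mathcal{T}\widetilde{Y}^n$) survives at $j=J-1$. The author's remark permitting the truncation index $J$ to depend on $n$ — starting with a sufficiently large spatial grid and shrinking by one at each time step — makes this benign. Apart from this boundary care, the argument is a routine, notation-heavy induction.
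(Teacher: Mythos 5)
Your proposal is correct and follows essentially the same route as the paper: induction on $n$, reduced to the one-step recursion $\widetilde{Y}^{n+1}=\mathcal{T}\widetilde{Y}^n+\widetilde{\beta}^n(\Delta L^n)$, which is verified pointwise from the scheme \eqref{def:FD} using $\delta_x\mathcal{S}_{\Delta x}=\delta_{x+\Delta x}$ and the piecewise linear structure of $\widetilde{Y}^n$ and $\widetilde{\beta}^n$. The only cosmetic difference is that the paper evaluates at a generic $x\in[x_j,x_{j+1})$ rather than reducing to the nodes via the interpolant structure, and your explicit attention to the boundary cell $[x_{J-1},x_J]$ is a point the paper's proof itself leaves implicit.
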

\begin{proof}
We prove the result by induction. It clearly holds for $n=0$, since then 
$\widetilde{Y}^0=\widetilde{Y}_0=\mathcal{T}^0\widetilde{Y}_0$. Next, suppose that it holds
for $n\in\N$. Assume that $x\in[x_j,x_{j+1})$ for a given $j\in\N$, $j\leq J$. Then, 
$x+\Delta x\in[x_{j+1},x_{j+2})$, and we find
\begin{align*}
\delta_x\mathcal T\widetilde{Y}^n&=\delta_x\mathcal I\widetilde{Y}^n+\lambda\delta_x(\mathcal{S}_{\Delta x}-\mathcal I)\widetilde{Y}^n  \\
&=y_j^n+\lambda(y_{j+1}^n-y_j^n)+\frac{x-x_j}{\Delta x}(y_{j+1}^n+\lambda(y_{j+2}^n-y_{j+1}^n)) -\frac{x-x_j}{\Delta x}(y_j^n+\lambda(y_{j+1}^n-y_j^n))\,.
\end{align*}
But by the finite difference scheme \eqref{def:FD}, it follows
\begin{align*}
\delta_x\mathcal T\widetilde{Y}^n&=y_j^{n+1}-\beta_j^n(\Delta L^n)+\frac{x-x_j}{\Delta x}
(y_{j+1}^{n+1}-\beta_{j+1}^n(\Delta L^n))-\frac{x-x_j}{\Delta x}(y_j^{n+1}-\beta_j^n(\Delta L^n)) \\
&=y_j^{n+1}+\frac{x-x_j}{\Delta x}(y_{j+1}^{n+1}-y_{j}^{n+1}) \\
&\qquad-\left(\beta_j^n(\Delta L^n)+\frac{x-x_j}{\Delta x}(\beta_{j+1}^n(\Delta L^n)-\beta_j^n(\Delta L^n))\right)\,.
\end{align*}
By invoking the definition of $\widetilde{\beta}(t)$ and noting that $x$ can be chosen arbitrary, 
$$
\widetilde{Y}^{n+1}=\mathcal T\widetilde{Y}^n+\widetilde{\beta}^n(\Delta L^n)\,.
$$
From the induction hypothesis, we then find
$$
\widetilde{Y}^{n+1}=\mathcal{T}^{n+1}\widetilde{Y}_0+\mathcal{T}\sum_{i=0}^{n-1}\mathcal{T}^{n-1-i}\widetilde{\beta}^i(\Delta L^i)+\widetilde{\beta}^n(\Delta L^n)=\mathcal{T}^{n+1}\widetilde{Y}_0+\sum_{i=0}^{n}\mathcal{T}^{n-i}\widetilde{\beta}^i(\Delta L^i)\,.
$$
This completes the proof.
\end{proof}

The above lemma characterizes the finite difference scheme \eqref{def:FD} for a given discretization 
as the sum of two entities which, under appropriate conditions, will converge to their corresponding parts in the mild solution of  \eqref{spde:gen} as we consider finer and finer partitions in time and space. More precisely we will employ the fact that the composed operator $\mathcal T^n$, where $\mathcal T = \mathcal T_{\Delta x, \Delta t}$ is defined by \eqref{def:T}, converges to the left shift operator $\mathcal S_{t_n}$ as we consider finer and finer partitions in first time and then space. 

Let us take a closer look on the family \eqref{def:T} of operators. The following lemma will be employed later for proving a convergence result on the finite difference scheme.
\begin{lemma}
\label{lem:Tconv}
Suppose $\zeta$ is an $\widetilde{\Hi}$-valued random variable satisfying the Lipschitz condition 
$$
\E[\|(\mathcal{S}_x\zeta - \mathcal{S}_y\zeta)\mathcal Q^{1/2}\|_{\text{HS}}^2] \leq C|x-y|^2
$$
for all $x,y \geq 0$ where $C > 0$ is a constant. Then 
$$
\E[\|(\mathcal T^m \zeta - \mathcal S_t\zeta)\mathcal Q^{1/2} \|_{\text{HS}}^2] \leq Ct(\Delta x - \Delta t).
$$ 
where $\mathcal T$ is defined in \eqref{def:T} with $\Delta t = t/m$ and $\Delta t \leq \Delta x$, for all $x \geq 0$, $t > 0$ and  $m \geq 1$.
\end{lemma}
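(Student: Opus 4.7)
The plan is to exploit the semigroup and binomial structure of $\mathcal{T}$. Since $\mathcal{T} = (1-\lambda)\mathcal{I} + \lambda\mathcal{S}_{\Delta x}$ with $\lambda = \Delta t/\Delta x \in [0,1]$ by the CFL condition, and since $\mathcal{I}$ commutes with $\mathcal{S}_{\Delta x}$, the binomial theorem combined with the semigroup property $\mathcal{S}_{\Delta x}^k = \mathcal{S}_{k\Delta x}$ gives
\begin{equation*}
\mathcal{T}^m = \sum_{k=0}^m \binom{m}{k}(1-\lambda)^{m-k}\lambda^k\,\mathcal{S}_{k\Delta x}.
\end{equation*}
Writing $p_k := \binom{m}{k}(1-\lambda)^{m-k}\lambda^k$ and noting $\sum_k p_k = 1$ and $t = m\Delta t$ so that $\mathcal{S}_t = \mathcal{S}_{m\Delta t}$, this allows me to express the error as a convex combination
\begin{equation*}
\mathcal{T}^m\zeta - \mathcal{S}_t\zeta = \sum_{k=0}^m p_k\,(\mathcal{S}_{k\Delta x} - \mathcal{S}_{m\Delta t})\zeta.
\end{equation*}

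Next I would apply convexity of the squared seminorm $A \mapsto \|A\mathcal{Q}^{1/2}\|_{\text{HS}}^2$ (which holds since it equals $\mathrm{Tr}(A\mathcal{Q}A^*)$, hence is a squared Hilbertian seminorm) together with Jensen's inequality, and then take expectations to obtain
\begin{equation*}
\E\bigl[\|(\mathcal{T}^m\zeta - \mathcal{S}_t\zeta)\mathcal{Q}^{1/2}\|_{\text{HS}}^2\bigr] \leq \sum_{k=0}^m p_k\,\E\bigl[\|(\mathcal{S}_{k\Delta x} - \mathcal{S}_{m\Delta t})\zeta\,\mathcal{Q}^{1/2}\|_{\text{HS}}^2\bigr].
\end{equation*}

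Invoking the Lipschitz hypothesis on $\zeta$ bounds each summand by $C|k\Delta x - m\Delta t|^2 = C(\Delta x)^2(k - m\lambda)^2$, since $m\Delta t = m\lambda\Delta x$. The remaining sum $\sum_{k=0}^m p_k(k - m\lambda)^2$ is exactly the variance of a Binomial$(m,\lambda)$ random variable, which equals $m\lambda(1-\lambda)$. Substituting and unpacking $\lambda = \Delta t/\Delta x$ yields
\begin{equation*}
C(\Delta x)^2 \cdot m\lambda(1-\lambda) = C\,(m\Delta t)\,(\Delta x - \Delta t) = Ct(\Delta x - \Delta t),
\end{equation*}
which is the claimed bound.

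The main obstacle is simply recognising the binomial/random-walk interpretation of $\mathcal{T}^m$ as averaging shifts $\mathcal{S}_{k\Delta x}$ against binomial weights centred at $m\lambda\Delta x = m\Delta t$; once this is identified, the Lipschitz hypothesis lets the problem reduce to computing the variance of a binomial distribution, and the CFL condition $\Delta t \leq \Delta x$ is exactly what guarantees $\lambda \in [0,1]$ so that the weights $p_k$ are genuine probabilities and convexity applies.
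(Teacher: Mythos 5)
Your proposal is correct and follows essentially the same route as the paper: binomial expansion of $\mathcal{T}^m$ into binomial weights on the shifts $\mathcal{S}_{k\Delta x}$, a convexity/Jensen step (the paper phrases it as triangle inequality plus Cauchy--Schwarz), the Lipschitz hypothesis, and the identification of the remaining sum as the variance of a Binomial$(m,\lambda)$ scaled by $\Delta x$. The only cosmetic difference is that your direct expansion of $((1-\lambda)\mathcal{I}+\lambda\mathcal{S}_{\Delta x})^m$ covers $\lambda=1$ without the separate case the paper needs after factoring out $(1-\lambda)^m$.
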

\begin{proof}
Let $\lambda = \Delta t/\Delta x$ and suppose first that $\lambda = 1$, then clearly $\mathcal T = \mathcal S_{\Delta x}$ and $\mathcal T^m = \mathcal S_t$. Now suppose that $\lambda < 1$, and observe that by the binomial theorem it holds that
\begin{align*}
\mathcal T^m\zeta &= (1-\lambda)^m\left(\mathcal I + \frac{\lambda}{1-\lambda}\mathcal S_{\Delta x}\right)^m\zeta 
= \sum_{k=0}^m {m \choose k} \lambda^k (1-\lambda)^{m-k}\mathcal{S}_{k\Delta x}\zeta\,.
\end{align*} 
It follows by the triangle inequality that
\begin{align*}
\|(\mathcal T^m \zeta - \mathcal S_t\zeta)\mathcal Q^{1/2} \|_{\text{HS}}^2 &= \left\| \left(\sum_{k=0}^m {m \choose k} \lambda^k (1-\lambda)^{m-k} (\mathcal{S}_{k\Delta x}\zeta-\mathcal S_t\zeta) \right)\mathcal Q^{1/2} \right\|_{\text{HS}}^2 \\
&\leq \left| \sum_{k=0}^m {m \choose k} \lambda^k (1-\lambda)^{m-k} \left\|(\mathcal{S}_{k\Delta x}\zeta-\mathcal S_t\zeta)\mathcal Q^{1/2} \right\|_{\text{HS}} \right|^2 \\
&\leq \sum_{k=0}^m {m \choose k} \lambda^k (1-\lambda)^{m-k} \left\|(\mathcal{S}_{k\Delta x}\zeta - \mathcal S_t\zeta)\mathcal Q^{1/2} \right\|_{\text{HS}}^2\,.
\end{align*}
In the last step we applied the Cauchy-Schwarz inequality. Finally, we employ the Lipschitz condition 
on $\zeta$ to derive,
\begin{align*}
\E[\|(\mathcal T^m \zeta - \mathcal S_t\zeta)\mathcal Q^{1/2} \|_{\text{HS}}^2] 
&\leq C\sum_{k=0}^m {m \choose k} \lambda^k (1-\lambda)^{m-k} |k\Delta x - t|^2\,. 
\end{align*}
Observing that a binomial random variable $Z$, with parameters $(m,\lambda)$ has expected value $m\lambda$ and variance $m\lambda(1-\lambda)$, it is easy to deduce that the random variable $\Delta xZ$ has expected value $t$ and variance $t(\Delta x -\Delta t)$. Hence,
$$
\sum_{k=0}^m {m \choose k} \lambda^k (1-\lambda)^{m-k} |k\Delta x - t|^2=t(\Delta x - \Delta t)\,.
$$
This concludes the proof.
\end{proof}
We can apply the same type of argument to derive the error induced by approximating
$\mathcal{S}_tY_0$ by $\mathcal{T}^{m}\widetilde{Y}_0$:
\begin{lemma}
\label{lemma:conv-initial}
Assume for $x,y\in\R_+$ that $|\mathcal{S}_xY_0-\mathcal{S}_yY_0|_{\widetilde{\Hi}}\leq C_0|x-y|$
for some positive constant $C_0$. Then, 
$$
|\mathcal{T}^{m}\widetilde{Y}_0-\mathcal{S}_tY_0|_{\widetilde{\Hi}}\leq C_0\sqrt{t(\Delta x-\Delta t)}
 + \sup_{u \leq t}\|\mathcal{S}_{u}\|_{\text{op}} |\widetilde{Y}_0-Y_0|_{\widetilde{\Hi}}\,,
$$
where $\mathcal T$ is defined in \eqref{def:T} with $\Delta t = t/m$ and $\Delta t \leq \Delta x$, for all $x \geq 0$, $t > 0$ and  $m \geq 1$.
\end{lemma}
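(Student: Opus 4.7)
The plan is to mirror the strategy of Lemma \ref{lem:Tconv}, but in the deterministic setting appropriate for the initial datum, while picking up an extra correction for the interpolation error $\widetilde Y_0-Y_0$. Concretely, I would introduce the auxiliary quantity $\mathcal{T}^m Y_0$ and apply the triangle inequality
\begin{equation*}
\bigl|\mathcal{T}^m\widetilde Y_0-\mathcal{S}_tY_0\bigr|_{\widetilde\Hi}
\leq \bigl|(\mathcal{T}^m-\mathcal{S}_t)Y_0\bigr|_{\widetilde\Hi}
+ \bigl|\mathcal{T}^m(\widetilde Y_0-Y_0)\bigr|_{\widetilde\Hi},
\end{equation*}
so that the two resulting summands match the two terms on the right-hand side of the claim.

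The first summand is handled exactly as in the proof of Lemma \ref{lem:Tconv}, only without the expectation. I would separate the degenerate case $\lambda=1$, in which $\mathcal T=\mathcal S_{\Delta x}$ and $\mathcal T^m=\mathcal S_t$, so the term vanishes. For $\lambda<1$ I would use the binomial expansion
\begin{equation*}
\mathcal{T}^m=\sum_{k=0}^m\binom{m}{k}\lambda^k(1-\lambda)^{m-k}\mathcal{S}_{k\Delta x},
\end{equation*}
apply the triangle inequality followed by Cauchy--Schwarz to the convex combination, and then invoke the Lipschitz hypothesis on $Y_0$ to obtain
\begin{equation*}
\bigl|(\mathcal{T}^m-\mathcal{S}_t)Y_0\bigr|_{\widetilde\Hi}^2
\leq C_0^2\sum_{k=0}^m\binom{m}{k}\lambda^k(1-\lambda)^{m-k}(k\Delta x-t)^2
= C_0^2\,t(\Delta x-\Delta t),
\end{equation*}
by the variance identity for a $\mathrm{Bin}(m,\lambda)$ variable scaled by $\Delta x$, exactly as in Lemma \ref{lem:Tconv}. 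Taking square roots gives the first term $C_0\sqrt{t(\Delta x-\Delta t)}$.

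For the second summand, reusing the same binomial representation of $\mathcal{T}^m$ gives the operator-norm bound
\begin{equation*}
\|\mathcal{T}^m\|_{\mathrm{op}}
\leq \sum_{k=0}^m\binom{m}{k}\lambda^k(1-\lambda)^{m-k}\|\mathcal{S}_{k\Delta x}\|_{\mathrm{op}}
\leq \sup_{u\leq t}\|\mathcal{S}_u\|_{\mathrm{op}},
\end{equation*}
where the last inequality is the one formulated in the lemma. Applied to the vector $\widetilde Y_0-Y_0\in\widetilde\Hi$, this yields precisely the second summand of the stated estimate, and combining the two bounds finishes the argument. I do not expect any serious obstacle here: the nontrivial work (the variance-of-binomial computation) is already done in Lemma \ref{lem:Tconv}, and the only new ingredient is controlling $\|\mathcal{T}^m\|_{\mathrm{op}}$ through its convex-combination structure, which in turn is the mechanism by which the interpolation error $|\widetilde Y_0-Y_0|_{\widetilde\Hi}$ propagates through the scheme.
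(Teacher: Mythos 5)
Your proposal is correct and follows essentially the same route as the paper: the same triangle-inequality decomposition through $\mathcal{T}^m Y_0$, the same reuse of the binomial/variance argument from Lemma~\ref{lem:Tconv} (without the expectation) for the first term, and the same binomial convex-combination bound $\|\mathcal{T}^m\|_{\text{op}}\leq\sup_{u\leq t}\|\mathcal{S}_u\|_{\text{op}}$ for the second.
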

\begin{proof}
By the triangle inequality
$$
|\mathcal{T}^{m}\widetilde{Y}_0-\mathcal{S}_tY_0|_{\widetilde{\Hi}}\leq 
|\mathcal{T}^{m}\widetilde{Y}_0-\mathcal{T}^{m}Y_0|_{\widetilde{\Hi}}
+|\mathcal{T}^{m}Y_0-\mathcal{S}_tY_0|_{\widetilde{\Hi}}\,.
$$
For the second term on the right hand side, using the Lipschitz assumption on $Y_0$,  we can repeat the argument in the proof Lemma~\ref{lem:Tconv} for the norm $|\cdot|_{\widetilde{\Hi}}$ instead of $\|\cdot\|_{\text{HS}}$ to obtain
$$
|\mathcal{T}^{m}Y_0-\mathcal{S}_tY_0|_{\widetilde{\Hi}}\leq C_0\sqrt{t(\Delta x-\Delta t)}\,.
$$
For the first term, we find
$$
|\mathcal{T}^{m}\widetilde{Y}_0-\mathcal{T}^{m}Y_0|_{\widetilde{\Hi}}\leq
\|\mathcal{T}^m\|_{\text{op}}|\widetilde{Y}_0-Y_0|_{\widetilde{\Hi}}\,.
$$
Now suppose first that $\lambda = 1$, then clearly $\mathcal{T} = \mathcal{S}_{\Delta x}$ and $\mathcal{T}^m = \mathcal{S}_t$. If however $\lambda < 1$, then 
$$
\|\mathcal{T}^m\|_{\text{op}} = \sup\{| \mathcal{T}^m f |_{\widetilde{\Hi}}  : f \in \widetilde{\Hi}, |f|_{\widetilde{\Hi}}=1\}, 
$$
and we may apply the binomial theorem to obtain
\begin{align*}
\mathcal T^mf &= (1-\lambda)^m\left(\mathcal I + \frac{\lambda}{1-\lambda}\mathcal S_{\Delta x}\right)^mf 
= \sum_{k=0}^m {m \choose k} \lambda^k (1-\lambda)^{m-k}\mathcal{S}_{k\Delta x}f\,,
\end{align*}
so it follows by the triangle inequality that 
\begin{equation}\label{Tm:ineq}
\|\mathcal{T}^m\|_{\text{op}} \leq \max_{0 \leq k \leq m}\|\mathcal{S}_{k\Delta x}\|_{\text{op}} \leq \sup_{u \leq t}\|\mathcal{S}_{u}\|_{\text{op}}\,.
\end{equation}
This completes the proof.
\end{proof} 
In general the operator norm of a $C_0$-semigroup grows at most exponentially with time,
so that we find $\sup_{u\leq t}\|\mathcal{S}_u\|_{\text{op}}\leq c_1\exp(c_2 t)$ for positive 
constants $c_1,c_2$. If $\widetilde{\Hi}=\Hi_w$ with $w^{-1}\in L^1(\R_+)$, the shift semigroup
$\mathcal{S}_t$ is uniformly bounded by Lemma~\ref{lemma:shift-uniformly-bounded}, and moreover
$\sup_{u\leq t}\|\mathcal{S}_u\|_{\text{op}}\leq \sqrt{2(1+c^2)}$ for $c^2=\int_0^{\infty}w^{-1}(x)\,dx$.

\begin{proposition}
\label{prop:Conv}
Assume that for $s,u,x,y\in\R_+$, 
$$
|\mathcal{S}_xY_0-\mathcal{S}_yY_0|_{\widetilde{\Hi}} \leq C_0|x-y|\,, 
$$
$$
\E\left[\| (\mathcal{S}_x\beta(s) - \mathcal{S}_y\beta(s))\mathcal{Q}^{1/2}\|_{\text{HS}}^2\right] \leq C|x-y|^2\,,
$$
and 
$$
\E\left[\| (\beta(s) -\beta(u))\mathcal{Q}^{1/2}\|_{\text{HS}}^2\right] \leq C|s-u|^2\,,
$$
for positive constants $C_0, C$. 
Then, for $t_n = n\Delta t$ and $x_j = j\Delta x$, $n,j \geq 0$, it holds that
\begin{align*}
\E\left[|\widetilde{Y}^N-Y(t_N)|_{\widetilde{\Hi}}^2\right] 
&\leq 4t(C_0^2+2Ct)(\Delta x-\Delta t)+8 C t \left(1+\frac13\sup_{u \leq t}\|\mathcal{S}_u\|_{\text{op}}^2\right)(\Delta t)^2 \\
&\qquad + 4\sup_{u \leq t}\|\mathcal{S}_{u}\|_{\text{op}}^2 \E\left[|\widetilde{Y}_0-Y_0|_{\widetilde{\Hi}}^2\right] \\
&\qquad + 8 t \sup_{u \leq t}\|\mathcal{S}_u\|_{\text{op}}^2 \max_{0 \leq i \leq N-1} \E\left[\|(\widetilde{\beta}^i
-\beta^i)\mathcal{Q}^{1/2}\|_{\text{HS}}^2\right]\,.
\end{align*}
where $\Delta t = t/N$ and $\Delta t \leq \Delta x$, for all $x \geq 0$, $t > 0$ and  $N \geq 1$.
\end{proposition}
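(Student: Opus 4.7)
My approach is to compare the explicit representation of $\widetilde Y^N$ from Lemma~\ref{lem:itscheme} with the mild-solution formula for $Y(t_N)$, and to reduce everything to an Itô-isometric estimate for a single stochastic integral on $[0,t_N]$. First I would use Proposition~\ref{thm:commute-L} to pull each $\mathcal T^{N-1-i}$ inside the stochastic integral and write
\[
\widetilde Y^N = \mathcal T^N\widetilde Y_0+\int_0^{t_N}\psi_N(s)\,dL(s),\qquad \psi_N(s):=\sum_{i=0}^{N-1}\mathcal T^{N-1-i}\widetilde\beta(t_i)\mathbf{1}_{[t_i,t_{i+1})}(s),
\]
while $Y(t_N)=\mathcal S_{t_N}Y_0+\int_0^{t_N}\mathcal S_{t_N-s}\beta(s)\,dL(s)$. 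The elementary inequality $(a+b)^2\leq 2(a^2+b^2)$ then splits the total error into an initial-datum part and a stochastic-integral part.

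The initial-datum part is handled immediately by Lemma~\ref{lemma:conv-initial}: squaring its conclusion and again using $(a+b)^2\leq 2(a^2+b^2)$ yields the terms $4C_0^2 t(\Delta x-\Delta t)$ and $4\sup_{u\leq t}\|\mathcal S_u\|_{\text{op}}^2\,\E[|\widetilde Y_0-Y_0|_{\widetilde\Hi}^2]$. For the stochastic part, the Itô isometry \eqref{isometry} (applied to the predictable integrand $\psi_N(s)-\mathcal S_{t_N-s}\beta(s)$ on $[0,t_N]$) reduces the task to estimating, for $s\in[t_i,t_{i+1})$,
\[
\E\bigl[\|(\mathcal T^{N-1-i}\widetilde\beta(t_i)-\mathcal S_{t_N-s}\beta(s))\mathcal Q^{1/2}\|_{\text{HS}}^2\bigr].
\]

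I would then decompose the integrand into four natural pieces,
\[
\mathcal T^{N-1-i}(\widetilde\beta(t_i)-\beta(t_i))\ +\ (\mathcal T^{N-1-i}-\mathcal S_{t_{N-1-i}})\beta(t_i)\ +\ \mathcal S_{t_{N-1-i}}(\beta(t_i)-\beta(s))\ +\ (\mathcal S_{t_{N-1-i}}-\mathcal S_{t_N-s})\beta(s),
\]
and apply $(\sum a_k)^2\leq 4\sum a_k^2$ to produce the overall factor $8=2\cdot4$. For piece one I would use $\|\mathcal T^{N-1-i}\|_{\text{op}}\leq\sup_{u\leq t}\|\mathcal S_u\|_{\text{op}}$ (established in the proof of Lemma~\ref{lemma:conv-initial} via the binomial representation) and bound the time average by $\max_i\E[\|(\widetilde\beta^i-\beta^i)\mathcal Q^{1/2}\|_{\text{HS}}^2]$; this produces the $8t\sup_{u\leq t}\|\mathcal S_u\|_{\text{op}}^2\max_i\E[\cdots]$ term. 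For piece two, Lemma~\ref{lem:Tconv} applied with $\zeta=\beta(t_i)$ and the given shift-Lipschitz constant $C$ yields a bound $Ct_{N-1-i}(\Delta x-\Delta t)\leq Ct(\Delta x-\Delta t)$, which after integration in $s$ and summation in $i$ contributes $8Ct^2(\Delta x-\Delta t)$. Pieces three and four are handled by the two Lipschitz hypotheses on $\beta$ (in time and in shift), with $|t_i-s|,|t_{N-1-i}-(t_N-s)|\leq\Delta t$, giving the remaining $(\Delta t)^2$-terms.

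The only delicate point is the combinatorial bookkeeping: one must use the semigroup identity $\mathcal S_{t_N-s}=\mathcal S_{t_{N-1-i}}\mathcal S_{t_{i+1}-s}$ to route the shift Lipschitz assumption onto $\beta(s)$ (piece four) while keeping $\sup_{u\leq t}\|\mathcal S_u\|_{\text{op}}^2$ as the prefactor in piece three, and to integrate $|s-t_i|^2$ over $[t_i,t_{i+1})$ tightly to recover the factor $1/3$ in the stated bound. Everything else is a routine assembly of the four contributions, after which collecting the constants reproduces the right-hand side of the proposition.
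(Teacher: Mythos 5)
Your proposal is correct and follows essentially the same route as the paper: Itô isometry on the stochastic part, the same four telescoping pieces (approximation of $\beta$, Lemma~\ref{lem:Tconv} for $\mathcal T^{N-1-i}$ versus $\mathcal S_{t_{N-1-i}}$, time-Lipschitz of $\beta$, shift-Lipschitz of $\beta$), the bound \eqref{Tm:ineq}, and Lemma~\ref{lemma:conv-initial} for the initial datum. The only cosmetic difference is that you perform one four-way split with the factor $(\sum_{k=1}^4 a_k)^2\leq 4\sum_k a_k^2$, whereas the paper nests two two-way splits; the resulting constants ($8=2\cdot 4=2\cdot 2\cdot 2$) agree.
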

\begin{proof}
Since $L$ is square integrable it holds by the It\^o isometry that 
\begin{align*}
\E&\left[\left|\sum_{i=0}^{N-1} \mathcal T^{N-1-i}\widetilde{\beta}^i(\Delta L^i)-\sum_{i=0}^{N-1} \mathcal S_{t-t_{i+1}} \beta^i(\Delta L^i)\right|_{\widetilde{\Hi}}^2\right] \\
&\qquad=\E\left[\left|\int_{0}^t \left(\sum_{i=0}^{N-1} (\mathcal T^{N-1-i}\widetilde{\beta}^i - \mathcal S_{t-t_{i+1}}\beta^i)\mathbf{1}_{[t_i,t_{i+1})}(s)\right)\,dL(s)\right|_{\widetilde{\Hi}}^2\right] \\
&\qquad=\E\left[\int_{0}^t\|\sum_{i=0}^{N-1}(\mathcal T^{N-1-i}\widetilde{\beta}^i-\mathcal S_{t-t_{i+1}}\beta^i)\mathbf{1}_{[t_i,t_{i+1})}(s)\mathcal Q^{1/2}\|_{\text{HS}}^2\, ds\right] \\
&\qquad= \sum_{i=0}^{N-1} \E[ \|(\mathcal T^{N-1-i}\widetilde{\beta}^i-\mathcal S_{t-t_{i+1}}\beta^i)\mathcal Q^{1/2}\|_{\text{HS}}^2 ]\, \Delta t \,.
\end{align*}
Adding and subtracting $\mathcal{T}^{N-1-i}\beta^i$ and applying the elementary inequality
$(x+y)^2\leq 2x^2+2y^2$ yields, 
\begin{align*}
\E&\left[\left|\sum_{i=0}^{N-1} \mathcal T^{N-1-i}\widetilde{\beta}^i(\Delta L^i)-\sum_{i=0}^{N-1} \mathcal S_{t-t_{i+1}} \beta^i(\Delta L^i)\right|_{\widetilde{\Hi}}^2\right] \\
&\qquad\leq 2\sum_{i=0}^{N-1}\E\left[\|(\mathcal{T}^{N-1-i}\widetilde{\beta}^i-\mathcal{T}^{N-1-i}\beta^i)\mathcal{Q}^{1/2}\|_{\text{HS}}^2\right]\,\Delta t \\
&\qquad\qquad+2\sum_{i=0}^{N-1}\E\left[\|(\mathcal{T}^{N-1-i}\beta^i-\mathcal{S}_{t-t_{i+1}}\beta^i)\mathcal{Q}^{1/2}\|_{\text{HS}}^2\right]\,\Delta t \,.
\end{align*}
We estimate the second term by appealing to Lemma~\ref{lem:Tconv}, whereas the first term is
majorized by using the inequality \eqref{Tm:ineq}. Hence, 
\begin{align*}
\E&\left[\left|\sum_{i=0}^{N-1} \mathcal T^{N-1-i}\widetilde{\beta}^i(\Delta L^i)-\sum_{i=0}^{N-1} \mathcal S_{t-t_{i+1}} \beta^i(\Delta L^i)\right|_{\widetilde{\Hi}}^2\right] \\
&\qquad\leq 2\sup_{u \leq t}\|\mathcal{S}_u\|_{\text{op}}^2 \sum_{i=0}^{N-1}\E\left[\|(\widetilde{\beta}^i
-\beta^i)\mathcal{Q}^{1/2}\|_{\text{HS}}^2\right]\,\Delta t+2Ct^2(\Delta x-\Delta t) \\
&\qquad \leq 2t \sup_{u \leq t}\|\mathcal{S}_u\|_{\text{op}}^2 \max_{0 \leq i \leq N-1} \E\left[\|(\widetilde{\beta}^i
-\beta^i)\mathcal{Q}^{1/2}\|_{\text{HS}}^2\right]\ + 2Ct^2(\Delta x-\Delta t)\,.
\end{align*}
Furthermore by Lipschitz continuity of $\beta$ and the It\^o isometry, 
\begin{align*}
\E&\left[\left|\sum_{i=0}^{N-1}\mathcal S_{t-t_{i+1}} \beta^i(\Delta L^i) - \int_{0}^t \mathcal S_{t-s} \beta(s)\,dL(s)\right|_{\widetilde{\Hi}}^2\right] \\
&=  \E\left[\left|\int_{0}^t \left(\sum_{i=0}^{N-1} \mathcal S_{t-t_{i+1}} \beta^i 1_{[t_i,t_{i+1})}(s) - \mathcal S_{t-s}\beta(s) \right)\,dL(s)\right|_{\widetilde{\Hi}}^2\right] \\
&= \sum_{i=0}^{N-1}\E\left[\int_{t_i}^{t_{i+1}} \left\| \left( \mathcal S_{t-t_{i+1}}\beta^i - \mathcal S_{t-s}\beta(s)\right)\mathcal Q^{1/2}\right\|_{\text{HS}}^2\,ds\right] \\
&\leq 2\sum_{i=0}^{N-1}\E\left[\int_{t_i}^{t_{i+1}}\left( \left\| \left( \mathcal S_{t-t_{i+1}}\beta^i - \mathcal S_{t-s}\beta^i \right)\mathcal Q^{1/2}\right\|_{\text{HS}}^2 +\left\| \mathcal S_{t-s}(\beta^i - \beta(s)) \mathcal Q^{1/2}\right\|_{\text{HS}}^2 \right)\right] \,ds \\
&\leq 2tC(\Delta t)^2+2\sup_{u \leq t}\|\mathcal S_{u}\|_{\text{op}}^2
\sum_{i=0}^{N-1}\int_{t_i}^{t_{i+1}}\E\left[ \left\|(\beta(t_i) - \beta(s)) \mathcal Q^{1/2}\right\|_{\text{HS}}^2 \right]\,ds \\
&\leq 2Ct\left(1 + \frac13\sup_{u \leq t}\|\mathcal S_{u}\|_{\text{op}}^2\right)(\Delta t)^2 \,. 
\end{align*}
Putting the above inequalities together, we obtain 
\begin{align*}
\E&\left[\left|\sum_{i=0}^{N-1} \mathcal T^{N-1-i}\widetilde{\beta}^i(\Delta L^i) - \int_{0}^t \mathcal S_{t-s} \beta(s)\, dL(s)\right|_{\widetilde{\Hi}}^2 \right] \\
&\qquad\leq 2\E\left[\left|\sum_{i=0}^{N-1}\mathcal S_{t-t_{i+1}} \beta^i(\Delta L^i) - \int_{0}^t \mathcal S_{t-s} \beta(s)\,dL(s)\right|_{\widetilde{\Hi}}^2\right] \\
&\qquad\qquad+2\E\left[\left|\sum_{i=0}^{N-1} \mathcal T^{N-1-i}\widetilde{\beta}^i(\Delta L^i)-\sum_{i=0}^{N-1} \mathcal S_{t-t_{i+1}} \beta^i(\Delta L^i)\right|_{\widetilde{\Hi}}^2\right] \\
&\qquad\leq 4 t \sup_{u \leq t}\|\mathcal{S}_u\|_{\text{op}}^2 \max_{0 \leq i \leq N-1} \E\left[\|(\widetilde{\beta}^i
-\beta^i)\mathcal{Q}^{1/2}\|_{\text{HS}}^2\right]  \\
&\qquad\qquad  + 4 C t \left(1+\frac13\sup_{u \leq t}\|\mathcal{S}_u\|_{\text{op}}^2\right)(\Delta t)^2 + 4Ct^2(\Delta x-\Delta t) \,.
\end{align*}
The proof is completed after invoking Lemma~\ref{lemma:conv-initial}.
\end{proof}
Recall that $\beta(t):=\Gamma(t+\cdot,t)(\sigma(t)$ in the case of a Hambit field, for which we see that 
$\beta(s)-\beta(u)=\mathcal{S}_{s-u}\beta(u)-\mathcal{S}_0\beta(u)$ for $s\geq u\geq 0$. Hence, the two 
Lipschitz conditions on  $\beta$ in the Proposition above collapse into one, namely
$$
\E[\|\Gamma(s+x,s)(\sigma(s))-\Gamma(s+y,s)(\sigma(s))Q^{1/2}\|_{HS}^2]\leq C|x-y|^2\,,
$$ 
for all $x,y,s\in\R_+$. Thus, if the operator $\Gamma$ is Lipschitz continuous in its first argument,
the conditions on $\beta$ are fulfilled. The condition on $Y_0$ is trivially satisfied for Hambit fields
as $Y_0=0$ in that case.

As we have already touched upon it is not trivial to express a given Hambit field in terms of a certain finite set of vectors in $\Hi$. It is however the case according to  Proposition \ref{prop:ambit-lss} that for given ONB's in the Hilbert spaces $\U,\V$ and $\Hi$, a general Hambit field can be represented as a countable sum of real-valued VMV processes scaled by the ONB vectors in $\Hi$. Although it is difficult to say anything in general about the rate at which that sum converges, it is clear that it can be truncated, and thus our finite difference scheme \eqref{def:FD} can be implemented at least in an approximative manner, for a given Hambit field which fulfills the conditions stated in Proposition \ref{prop:ambit-lss}.

Now let us elucidate what the above convergence result means for the Hilbert space $\widetilde{\Hi} = \Hi_w$, which we introduced in the previous section. 
Note that,
\begin{align*}
\left\| (\widetilde{\beta}^i-\beta^i) \mathcal Q^{1/2}\right\|_{\text{HS}} 
&\leq \left\|\mathcal Q^{1/2}\right\|_{\text{HS}} \| \widetilde{\beta}^i-\beta^i\|_{\text{op}} =  \left\|\mathcal Q^{1/2}\right\|_{\text{HS}} \sup_{|f|_{\V}=1}| (\widetilde{\beta}^i-\beta^i)(f) |_{\widetilde{\Hi}}\,.
\end{align*}
Therefore, the convergence of 
$$
\max_{0 \leq i \leq N-1} \E\left[\|(\widetilde{\beta}^i
-\beta^i)\mathcal{Q}^{1/2}\|_{\text{HS}}^2\right]
$$
depends on the convergence of 
$$
\|(\widetilde \beta^i-\beta^i)(f)\|_w = |(\widetilde \beta^i-\beta^i)f(0)|_\Hi^2 + \int_0^\infty w(x)|(\widetilde \beta^i-\beta^i)(f)'(x)|_\Hi^2 dx,
$$
in $L^2(\Omega)$, where $|f|_\V = 1$, as we consider finer and finer partitions. We remark that if $f \in \V$ and $x \in [x_j,x_{j+1})$, then we may express the weak derivative above as
$$
(\widetilde \beta^i-\beta^i)(f)'(x) = \frac{\delta_{x_{j+1}} - \delta_{x_j}}{\Delta x} \beta^i f - \beta^i f'(x).
$$
That is, the right hand side is equal to the difference between a $\Hi$-valued finite difference approximation and its corresponding weak derivative evaluated at $x \in [x_j,x_{j+1})$. So the convergence of the scheme depends on the convergence of the above finite difference approximation in $\Hi$.

\end{document}